 \newcommand{\Rmnum}[1]{\expandafter\@slowromancap\romannumeral #1@}
 \def \4l{\ \ \ \  }
 \def \dpa{    \overline{\partial}    }
 \def \pa{    {\partial}    }
 \def \Supp{   {\mathrm{Supp}}\    }
 \def \cal{\mathcal}
 \def \C{ {  \mathbb{C}  } }
 \def \R{ {  \mathbb{R}  } }
 \def \im{ { \rm  i  } }
 \def \Psh{ \mathrm{Psh}  }
 \def \Spsh{ \mathrm{Spsh} }
 \def \llangle{  {  \langle\!\langle}  }
 \def \rrangle{  {  \rangle\!\rangle}  }
 \def \loc{ {\rm loc}  }
 \def \vp{ {\varphi}  }
 \def \ve{  {\varepsilon}  }
 \def \Nak{ {\rm Nak}   }
\def \reg{{\rm reg}}
 \def \Exp{{\rm Exp}}
 \newtheorem{thm}{Theorem}
 \newtheorem*{theorem}{Theorem}
 \newtheorem{cor}[thm]{Corollary}
 \newtheorem{lem}[thm]{Lemma}
 \newtheorem{pro}[thm]{Proposition}
 \newtheorem{defi}[thm]{Definition}
 \newtheorem{rem}[thm]{Remark}
 \newtheorem{exa}[thm]{Example}
 \numberwithin{equation}{section}
 \numberwithin{thm}{section}
 \date{}
\begin{document}
%\linenumbers
%\pagewiselinenumbers
%\switchlinenumbers
    \title{\bf Multiplier Submodule Sheaves and a problem of Lempert}

    \author[Z. Liu]{Zhuo Liu}
    \address{Zhuo Liu:     Beijing Institute of Mathematical Sciences and Applications, Beijing 101408, China; Department of Mathematics and Yau Mathematical Sciences Center, Tsinghua University, Beijing 100084, China.}
    \email{liuzhuo@amss.ac.cn}

    \author[B. Xiao]{Bo Xiao}
    \address{Bo Xiao: School of Mathematical Sciences, Peking University, Beijing, 100871, China.}
    \email{xb1994@amss.ac.cn}

    \author[H. Yang]{Hui Yang}
    \address{Hui Yang: School of Mathematical Sciences, Peking University; Institute of Mathematics\\Academy of Mathematics and Systems Science\\Chinese Academy of Sciences\\Beijing 100190, China}
    \email{yanghui@amss.ac.cn}

    \author[X. Zhou]{Xiangyu Zhou}
    \address{Xiangyu Zhou: Institute of Mathematics\\Academy of Mathematics and Systems Science\\Chinese Academy of Sciences\\Beijing 100190, China}
    \email{xyzhou@math.ac.cn}

\subjclass[2010]{32U05, 32E10, 32L10, 32W05, 14F18, 14C30, 32L05}

\keywords{Multiplier submodule sheaf, $L^2$ extension, Strong openness, Stability, Nakano positive singular Hermitian metric, Holomorphic vector bundle, Plurisubharmonic function}

\thanks{The fourth author is supported by National Key R\&D Program of China (2021YFA1003100) and the NSFC grant (no. 12288201).}

\begin{abstract}
    In this article, we establish an  $L^2$ extension theorem for Nakano semi-positive singular Hermitian metrics on holomorphic vector bundles, and the strong openness and stability properties of  the multiplier submodule sheaves associated to Nakano semi-positive singular Hermitian metrics on holomorphic vector bundles.
    We solve affirmatively a question of Lempert on the preservation of Nakano semi-positivity under limit of an increasing metrics based on Deng-Ning-Wang-Zhou's    characterization of Nakano positivity.
\end{abstract}

 \maketitle

\tableofcontents

\section{Introduction}
    \subsection{Multiplier ideal sheaves and strong openness conjecture}

  Let $\varphi$ be a plurisubharmonic function on a complex manifold $X$. The multiplier ideal sheaf $\cal{I}(\varphi)$ is the sheaf of germs of holomorphic functions $f$ such that $|f|^2e^{-\varphi}$ is locally integrable. The main idea actually goes back to the works of Bombieri \cite{Bom70} and Skoda \cite{Skoda72}.

 As an invariant of the singularities of plurisubharmonic functions, multiplier ideal sheaves play an important role in relating several complex variables to complex algebraic geometry. It is well-known that multiplier ideal sheaves possess fundamental properties such as coherence, torsion-freeness and integral closedness. Furthermore, there have been intriguing conjectures regarding further properties of multiplier ideal sheaves, such as the openness conjecture and the strong openness conjecture proposed in \cite{DK2001, Demailly-note2000}.

    \textbf{Openness conjecture(OC)}: If $\cal{I}(\varphi)=\cal{O}_X$, then
    $$ \cal{I}_+(\varphi)=\cal{I}(\varphi);$$

    \textbf{Strong openness conjecture(SOC)}:
    $$  \cal{I}_+(\varphi)=\cal{I}(\varphi).$$
    Here $\cal{I}_+(\varphi):=\bigcup_{\varepsilon>0}\cal{I}((1+\varepsilon)\varphi)$.

 In the case of $\dim X=2$, {OC} was proved by Favre-Jonsson in \cite{FM05j}, while {SOC} was proved by Jonsson-Musta\c{t}\u{a} in \cite{JonssonMustata2012}, both employing an algebraic approach. For general dimensions, Berndtsson proved {OC} and established an effectiveness result of OC in \cite{Berndtsson2013openness}, using the complex Brunn-Minkowski theory developed by himself as a key tool.

  {SOC} was proved by Guan-Zhou in \cite{GuanZhouSOC2013} by using unusually the Ohsawa-Takegoshi $L^2$ extension theorem together with other observations. Lempert discussed SOC in the vector bundle setting in \cite{Lempert17}.
 
  Moreover, Guan-Zhou proved an effectiveness result of {SOC} in \cite{GZ15}, implying some generalized versions of {OC} and {SOC}, namely the Demailly-Koll\'ar conjecture and the Jonsson-Musta\c{t}\u{a} conjecture.

	Furthermore, in \cite{GLZ2016} Guan-Li-Zhou found and proved the following stability property of the multiplier ideal sheaves:

\begin{theorem}
    Let $\{\varphi_j\}_j$ be a sequence of negative plurisubharmonic functions
    on $D\ni o$ which is convergent to $\varphi$ locally in measure and $\cal{I}(\varphi_j)_o\subset\cal{I}(\varphi)_o$. Let
    $\{F_j\}$ be a sequence of holomorphic functions on $D$ with $(F_j , o) \in \cal{I}(\varphi)_o$, which
    is compactly convergent to a holomorphic function $F$. Then $|F_j|^2e^{-\varphi_j}$ converges to $|F|^2e^{-\varphi}$ in $L^1_{\rm{loc}}$ near $o$.
\end{theorem}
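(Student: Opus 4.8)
The plan is to reduce the $L^1_{\mathrm{loc}}$-convergence to convergence of the numerical integrals $\int_B|F_j|^2e^{-\varphi_j}\to\int_B|F|^2e^{-\varphi}$ over a fixed small ball $B\Subset D$ containing $o$ and chosen so that $\int_B|F|^2e^{-\varphi}<+\infty$ (possible since $(F,o)\in\cal I(\varphi)_o$), and then to extract the real content from a uniform effective form of the strong openness property. For the reduction: $\varphi_j\to\varphi$ locally in measure, so along a subsequence $\varphi_j\to\varphi$ a.e.; since $F_j\to F$ locally uniformly, the integrands $g_j:=|F_j|^2e^{-\varphi_j}$ then converge a.e.\ to $g:=|F|^2e^{-\varphi}$, and by Scheff\'e's lemma, once $\int_Bg_j\to\int_Bg$ along such a subsequence one has $g_j\to g$ in $L^1(B)$ along it; as this applies to every subsequence, the full sequence converges in $L^1(B)$. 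Moreover Fatou's lemma gives at once $\liminf_j\int_Bg_j\ge\int_Bg$, so the entire problem is the upper bound $\limsup_j\int_B|F_j|^2e^{-\varphi_j}\le\int_B|F|^2e^{-\varphi}$.

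The next step is a truncation. Put $\varphi^{(m)}:=\max(\varphi,-m)$ and $\varphi_j^{(m)}:=\max(\varphi_j,-m)$ for $m>0$. For each fixed $m$ these functions lie in $[-m,0]$ and $\varphi_j^{(m)}\to\varphi^{(m)}$ in measure, so with $F_j\to F$ uniformly one obtains $\int_B|F_j|^2e^{-\varphi_j^{(m)}}\to\int_B|F|^2e^{-\varphi^{(m)}}$ by bounded convergence, while $\int_B|F|^2e^{-\varphi^{(m)}}\uparrow\int_B|F|^2e^{-\varphi}<\infty$. Hence the upper bound will follow once we prove the uniform tail estimate $\sup_j\int_{B\cap\{\varphi_j<-m\}}|F_j|^2e^{-\varphi_j}\to0$ as $m\to\infty$. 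Since $e^{-\varphi_j}\le e^{-\varepsilon m}e^{-(1+\varepsilon)\varphi_j}$ on $\{\varphi_j<-m\}$ for every $\varepsilon>0$, it is enough, after shrinking $B$ to some $B'\Subset B$ with $o\in B'$, to exhibit a single $\varepsilon>0$ with
$$\sup_j\int_{B'}|F_j|^2e^{-(1+\varepsilon)\varphi_j}<+\infty.$$

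This last uniform bound is the heart of the argument, and I would derive it from the effective strong openness theorem of Guan--Zhou and Guan, applied on the Stein domain $B$ with weight $\varphi_j$ and datum $F_j$ (Theorem~\ref{thm 1} is the vector-bundle version of this circle of results, and $\theta(\beta)\to+\infty$ as $\beta\to0^+$, which is the mechanism behind its ``in particular'' assertion). Two quantities must be controlled uniformly in $j$. First, an a priori bound $C_0:=\sup_j\int_B|F_j|^2e^{-\varphi_j}<+\infty$: here the hypotheses $(F_j,o),(F,o)\in\cal I(\varphi)_o$ and $\cal I(\varphi_j)_o\subseteq\cal I(\varphi)_o$ (which, as in the classical case $\varphi_j\le\varphi$, restricts how singular $\varphi_j$ can be relative to $\varphi$) are combined with the coherence of $\cal I(\varphi)$ --- writing $F_j-F$ in fixed local generators of $\cal I(\varphi)$, with coefficients tending to $0$ because $F_j\to F$ uniformly --- and a Skoda-type exponential estimate for the normalized plurisubharmonic functions $\varphi_j$. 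Second, the capacity $C_{F_j}(B)$ appearing in the theorem must be bounded below uniformly, which follows from the uniform convergence $F_j\to F$ provided $F\not\equiv0$ near $o$ (if $F\equiv0$ the statement already follows from the lower bound alone). Granting these, one fixes one $\varepsilon>0$ with $\theta(\varepsilon)>C_0/\inf_jC_{F_j}(B)$, and the quantitative estimate then bounds $\int_{B'}|F_j|^2e^{-(1+\varepsilon)\varphi_j}$ by a constant depending only on $C_0,\varepsilon,B,B'$, uniformly in $j$. Assembling the steps gives $\limsup_j\int_B|F_j|^2e^{-\varphi_j}\le\int_B|F|^2e^{-\varphi}$, whence the integral convergence and, by Scheff\'e plus the subsequence principle, the $L^1_{\mathrm{loc}}$-convergence near $o$. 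I expect the hardest point to be exactly this uniformity in $j$: securing the input $C_0$ in the absence of any pointwise inequality between $\varphi_j$ and $\varphi$, and preventing the output bound from degenerating as $j\to\infty$ --- it is for the latter that the quantitative ($\theta$-versus-capacity) form of strong openness, rather than its plain version, is indispensable.
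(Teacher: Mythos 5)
Your reduction skeleton (Fatou for the lower bound, truncation of $\varphi_j$ at $-m$, Scheff\'e plus the subsequence principle) is correct, but the heart of the theorem is exactly what it leaves unproved, and the route you sketch for it cannot work as written. All the hypotheses control integrability against $e^{-\varphi}$, never against $e^{-\varphi_j}$: the inclusion $\mathcal{I}(\varphi_j)_o\subset\mathcal{I}(\varphi)_o$ is the condition satisfied when $\varphi_j\le\varphi$, so contrary to your parenthetical it places no upper restriction on the singularities of $\varphi_j$, and $(F_j,o)\in\mathcal{I}(\varphi)_o$ says nothing about $\int_B|F_j|^2e^{-\varphi_j}$. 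Hence your ``a priori bound'' $C_0=\sup_j\int_B|F_j|^2e^{-\varphi_j}<+\infty$ is not an input one can secure by writing $F_j-F$ in generators of $\mathcal{I}(\varphi)_o$ (that again only helps against the weight $\varphi$) or by a Skoda-type estimate (convergence in measure gives no uniform control of the relevant singularities of $\varphi_j$); in fact the finiteness of $\int_U|F_j|^2e^{-\varphi_j}$ for large $j$ on a fixed neighborhood is part of the \emph{conclusion} of the theorem, so assuming $C_0$ is essentially circular. Second, even granting $C_0$, the effectiveness results you invoke (\cite{GZ15}, \cite{Guan19}, or Theorem~\ref{main thm1, effective } here) conclude only the germ membership $F_j\in\mathcal{I}((1+\varepsilon)\varphi_j)_o$; they do not yield the quantitative bound $\sup_j\int_{B'}|F_j|^2e^{-(1+\varepsilon)\varphi_j}<+\infty$ on a neighborhood $B'$ fixed independently of $j$, which is what your tail estimate needs. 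Moreover the capacity-type constant $C_{F_j,\,c^{F_j}_o(\varphi_j)^+}$ measures the $L^2$-distance from $F_j$ to the jumped ideal associated with $\varphi_j$, which varies with $j$; a uniform positive lower bound for it does not follow from $F_j\to F$ and $F\not\equiv0$. So the proposal reduces the theorem to a uniform effective statement that is at least as strong as the theorem itself and is left unestablished.

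For comparison: the present paper does not prove this statement at all --- it is quoted from Guan--Li--Zhou \cite{GLZ2016}, whose proof obtains the bound $\limsup_j\int|F_j|^2e^{-\varphi_j}\le\int|F|^2e^{-\varphi}$ directly by the $L^2$-machinery behind strong openness (optimal $L^2$ extension together with the sublevel-set construction, i.e.\ estimates of the type of Theorem~\ref{main thm 1} applied with the varying weights and carried out with explicit constants), rather than by quoting an effectiveness criterion as a black box; that is precisely the uniform-in-$j$ input missing from your argument. Note also that the vector-bundle stability proof in Section 5 (Theorem~\ref{main thm3 more general}) cannot be imitated to fill the gap: it relies on the pointwise comparison $h_j\ge h$, which gives $|F_j|^2_{h_j}\le|F_j|^2_he^{\varphi-\varphi_j}$ and has no counterpart in the Guan--Li--Zhou hypotheses, and its key Lemma~\ref{lem e^(phi-phi_j) bounded} already invokes the Guan--Li--Zhou theorem as an ingredient.
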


\subsection{$L^2$ extension theorem and strong openness on vector bundles}

 The above results on strong openness and stability can be reformulated within the context of holomorphic line bundles with singular Hermitian metrics. Let $(L,h)$ be a pseudoeffective line bundle, i.e., the curvature current of $h$ is semi-positive, then the associated multiplier ideal sheaf $\cal{I}(h)$ satisfies the strong openness.

It is natural to ask whether the analogous multiplier submodule sheaves, associated to singular Hermitian metrics on holomorphic vector bundles, satisfy  similar properties. Lempert initiated studying such a question in \cite{Lempert17}. In the present paper, motivated by Lempert's work, we address this question by two different methods and obtain some additional properties concerning multiplier submodule sheaves. During our research process, we observed an affirmative answer to a question of Lempert on the preservation of Nakano semi-positivity under limit of an increasing metrics.

 Let $X$ be a complex manifold, $E$ be a holomorphic vector bundle of rank $r$ over $X$.
    A singular Hermitian metric on $E$ is a measurable map $h$ from the base
    manifold $X$ to the set of Hermitian forms on the fibers of $E$, which assigns to almost every
    point $x \in X$ a Hermitian form $h(x)$ on $E_x$ satisfying $0 < \det h(x) < +\infty$.

\begin{defi}[\cite{Cataldo98}]
    The multiplier submodule sheaf $\mathcal{E}(h)$ of $\mathcal{O}(E)$ associated to $(E,h)$  is the sheaf of the germs $s_x \in \mathcal{O}(E)_x$
    such that $|s_x|^2_h$ is integrable in some  neighborhood of x.
\end{defi}

 We need to impose some positivity assumption on $h$. It is well-known that, when $h$ is smooth, $h$ is Griffiths semi-positive if and only if $\log|u|^2_{h^*}$ is plurisubharmonic for any local holomorphic section $u$ of the dual bundle. This characterization naturally leads to a definition of Griffiths semi-positivity for singular Hermitian metrics (see \cite{BP08,Rau15,PT18}).

	 In order to obtain further results on multiplier submodule sheaves, we need to consider Nakano positivity. Notably, when the base manifold is one-dimensional, Nakano positivity coincides with Griffiths positivity for smooth Hermitian metrics. Naturally, it comes to mind that one might use the Chern curvature current to define the Nakano positivity for singular Hermitian metrics. However, in \cite{Rau15}, Raufi pointed out that the Chern curvature current of a Griffiths semi-positive singular Hermitian metric may not have measure coefficients.

To overcome this difficulty,
 Lempert \cite{Lempert17} introduced a new positivity concept for singular Hermitian metrics $h$ on holomorphic Hilbert bundles: the Chern curvature of $h$ is said to dominate 0 in the sense of Nakano if there exists a sequence of $C^2$-smooth Nakano semi-positive Hermitian metrics that converges increasingly to $h$.
  He further established an H\"ormander-Skoda type $L^2$-estimation, an Ohsawa-Takegoshi type $L^2$-extension theorem, and a strong openness property for a singular Hermitian metric with the Chern curvature dominating 0 in the sense of Nakano.

  In the same paper, Lempert posed a fundamental problem as follows.

$\bullet$ \textbf{Problem} \cite{Lempert17}
   Is a $C^2$ Hermitian metric whose Chern curvature dominates 0 in the sense of Nakano still Nakano semi-positive in the usual sense?

   This is a natural and beautiful question.

  Recently, Deng-Ning-Wang-Zhou in \cite{DNWZ20} introduced the notion of ``optimal $L^2$-estimate condition" and presented a characterization of Nakano positivity for smooth Hermitian metrics in terms of this notion. We observe that this characterization actually leads to an affirmative answer to Lempert's problem (see section 2.3).

\begin{theorem}[\cite{DNWZ20}]
	Let $h$ be a $C^2$-smooth Hermitian metric on holomorphic Hilbert bundle  $E$, then $h$
	is Nakano semi-positive if and only if $h$ is called to be $L^2$ optimal, that is,
	for any Stein open subset $U$ such that $E|_U$ is trivial, any K\"ahler form $\omega_U$ on $U$, any $\psi \in {\rm Spsh}(U)\cap C^\infty(U)$ and any $\overline{\partial}$-closed f $\in C^\infty_{(n,1)} (U,E|_U)$, there exists  $u\in  L^2_{(n,0)} (U,\omega_U,E|_U,he^{-\psi})$ such that $\overline{\partial}u = f$ and
\begin{equation}\label{def Hormander estimate}
	\int_{U}^{} |u|^2_{\omega_U,h} e^{-\psi} dV_{\omega_U}
	\le \int_{U}^{} \langle B^{-1}_{\omega_U,\psi} f,f \rangle_{\omega_U,h} e^{-\psi} dV_{\omega_U},
	\end{equation}
	provided that the right hand side is finite, where $B_{\omega_U,\psi}=[\im\partial\overline{\partial}\psi \otimes {\rm Id}_E, \Lambda_{\omega_U}]$.
\end{theorem}
 The ``only if" part is due to H\"ormander, Skoda, Demailly, and Lempert for the case of Hilbert bundles.\\

Recently, based on the above characterization, Inayama \cite{In20} defined the Nakano semi-positivity for singular Hermitian metrics and showed that the multiplier submodule sheaf $\mathcal{E}(h)$ associated to a Nakano semi-positive singular Hermitian metric $h$ is coherent. The reader is referred to Section \ref{ssect GSP} and \ref{ssect NSP} for more details.

\begin{defi}[\cite{In20}]\label{def NSP}
Let $h$ be a singular Hermitian metric on a holomorphic vector bundle $E$ over a complex manifold $X$.
  \begin{enumerate}
    \item[$(1)$]  $h$ is said to be Nakano semi-positive if $h$ is Griffiths semi-positive and is $L^2$ optimal.
    \item[$(2)$] $h$ is said to be locally Nakano semi-positive if  for any point $x\in X$, there exists an open neighborhood $U$ of $x$ such that $(E|_U,h|_U)$ is Nakano semi-positive .
    \item[$(3)$] $h$ is said to be (locally) Nakano positive if locally there exists a smooth strictly plurisubharmonic function $\psi$ such that $he^{\psi}$ is  (locally) Nakano semi-positive.
  \end{enumerate}	
\end{defi}

 These definitions also make sense for singular Hermitian metrics on holomorphic Hilbert bundles. However, with the exception of the solution to Lempert's problem (Proposition \ref{prop he^}), our other results heavily rely on the existence of $(\det E,\det h)$. Consequently, it is difficult to extend our method to holomorphic vector bundles of infinite rank. It is noteworthy that the importance of determinant metrics in the study of Griffiths semi-positive metrics was first noticed by Raufi \cite{Rau15}.

Now let us present our main theorems in this paper.
Firstly, we obtain an Ohsawa-Takegoshi type $L^2$ extension theorem for a holomorphic vector bundle with a Nakano semi-positive singular Hermitian metric.

Let $(X,\omega)$ be an $n$-dimensional Stein manifold possessing a K\"ahler metric $\omega$, and  $s=(s_1,\cdots,s_k)$, where $s_i(1\leq i\leq k)$ are global holomorphic functions on $X$.
Assume that $s$ is transverse to the zero section $$S:=\{z\in X : s(z)=0\},$$ and let
$\dim S=n-k$.
Let $\phi$ be a plurisubharmonic function on $X$ and  $e^{-\phi} |_S\in L^1_{\loc}(S)$.
Denote $r=|s|^2=\sum |s_i|^2$.
Given such an $r$, any continuous volume form $d\mu$ on $X$ induces a volume form $d\mu_r$ on $Y$ as follows.
Suppose $\theta\in C(X)$, then $d\mu_r$ will satisfy
\begin{equation}
\int_S \theta d\mu_r=\lim_{\varepsilon\to 0}\varepsilon^{-2k}
\int_{\{z\in X\colon r(z) < \varepsilon^2\}} \theta d\mu.
\end{equation}
Locally $d\mu_r$ can be computed if we introduce local coordinates $z_1,\cdots,z_n$ on $X$ so that $s_i= z_i$.
If $d\mu=\im^n\alpha dz_1\wedge d\bar z_1\cdots \wedge dz_n\wedge d\bar z_n$ and $v_k$ is the volume of the unit ball in $\C^k$,
then $d\mu_r= \im^{n-k}v_k\alpha dz_{k+1}\wedge d\bar z_{k+1}\cdots \wedge dz_n\wedge d\bar z_n$.
Direct computation gives that $d\mu_r$ induced by $dV_{X,\omega}$ is $dV_{S,\omega}/|\Lambda^k ds|^2$.
\begin{thm}[Theorem \ref{main1}]\label{thm l2ext}
Let $X,\omega,s, S, r$ be as above. Assume that $r\leq e^{-2}$ on $X$.
Let $E\to X$ be a holomorphic vector bundle with a Nakano semi-positive singular Hermitian metric $h$. Assume that $\varphi\not\equiv-\infty$ on every connected component of $S$, where $\varphi=- \log \det  h$ is the local weight.
	Let $f$ be a holomorphic section of $K_X\otimes E|_S$ over $S$ and
$$
\int_S|f|^2_hd\mu_r<+\infty,
$$
then there is a holomorphic section $F$ of $K_X\otimes E$ such that $F|_S=f$ on $S$ and
\begin{equation*}
\int_X  \frac{|F|^2_h}{ r^k(\log r)^2} dV_{X,\omega}\leq C \int_S|f|^2_h d\mu_r,
\end{equation*}
where $C$ is a constant only depending on the codimension of $S$.
\end{thm}

 In \cite{Hor1965}, H\"ormander obtained an $L^2$ extension theorem from a single point for Lipschitz continuous plurisubharmonic weight by directly solving the $\dpa$-equation. Recently, Hosono-Inayama \cite{HI19} obtained an $L^2$ extension theorem from a single point for locally H\"older continuous Hermitian metrics on holomorphic vector bundles and Deng-Ning-Wang-Zhou \cite{DNWZ20}
 obtained an $L^2$ extension theorem from a single point for continuous Hermitian metrics on holomorphic vector bundles. In their cases, the continuity assumptions are used to control the integrals over tubular neighborhoods.

 In our case, using the minimal solution, we derive an $L^2$ extension theorem of openness type \cite{Jen76,chenboyong,XZ22} from the $\dpa$-equations, rather than the twisted $\dpa$-equation. Subsequently, we apply a generalized Siu's lemma established by Zhou-Zhu \cite{Siu Lemma} to control the integrals over tubular neighborhoods of the submanifold.
Moreover, using the generalized Siu's lemma, we also show that the restriction of a locally Nakano semi-positive singular Hermitian metric onto a ``generic" complex submanifold is also locally Nakano semi-positive (see Proposition \ref{pro nak res nak}).

 The Ohsawa-Takegoshi $L^2$ extension theorem \cite{OT87} has been deeply developed and becomes one of the most powerful tools in complex analytic and algebraic geometry (see \cite{O18,O20,Z15}). Recently, Guan-Zhou \cite{GZ15} solved the strong openness conjecture by using the $L^2$ extension theorem from a movable subvarieties.
 Subsequently, Lempert \cite{Lempert17} overcame the decisive difficulties in extending Guan-Zhou's method to holomorphic Hilbert bundles. Building upon Lempert's work, it follows from Theorem \ref{thm l2ext} and Proposition \ref{pro nak res nak} that the multiplier submodule sheaves associated to Nakano semi-positive singular Hermitian metrics on holomorphic vector bundles satisfy the strong openness property.

\begin{thm}[Theorem \ref{soc}]\label{mainthm soc}
     Let $E \rightarrow X$ be a holomorphic vector bundle and $h_1 \ge h_2\ge \cdots$ be Nakano semi-positive singular  Hermitian metrics on $E$. Suppose that $h = \lim_j h_j$
    is bounded below by a continuous Hermitian metric, then $\bigcup_j \mathcal{E}(h_j) = \cal{E}(h)$.
\end{thm}

 By Proposition \ref{prop he^}, a singular Hermitian metric on $E$ whose Chern curvature dominates $0$ in the sense of Nakano must be Nakano semi-positive in the sense of Definition \ref{def NSP}. Therefore, Theorem \ref{mainthm soc} generalizes Lempert's result in the case of holomorphic vector bundles.

Moreover, together with Lemma \ref{lem e^(phi-phi_j) bounded},  we obtain the following strong openness property in a broader setting.

\begin{thm}[Corollary~\ref{main thm2,usual case}]\label{thm 2}
    Let $X$ be a complex manifold, $E$ be a holomorphic vector bundle on $X$ with  a  locally Nakano semi-positive singular Hermitian metric $h$ and Griffiths semi-positive singular Hermitian metrics $h_j,\ j\ge1$. If  $h_j\ge h$ and
    $-\log \det h_j$ converges to $-\log\det h$ locally in measure, then  $ \bigcup_{j} \mathcal{E}(h_j)=\mathcal{E}(h)$.
\end{thm}

\begin{rem}\label{rmk ge lempert}
        If  $h_j\ge h$ and $-\log \det h_j$ converges to $-\log\det h$ locally in measure, then $h_j$ converges to $h$ locally in measure.
        Moreover,
        we can take a subsequence $\{h_{j_k}\}$ such that $h_{j_k}$ converges to $h$ almost everywhere.
\end{rem}

      Secondly, similar to \cite{GZ15} and \cite{Guan19}, we define several invariants for singular Hermitian metrics on holomorphic vector bundles and subsequently derive an effectiveness result of the strong openness property for (locally) Nakano semi-positive singular Hermitian metrics.

\begin{defi}\label{def C_F}
    Let $o\in D$ $\subset \mathbb{C}^n$  be open, $E$ be a trivial holomorphic vector bundle with a Nakano semi-positive singular Hermitian metric $h$. Let $F \in \mathcal{O}(E)(D)$,

\begin{enumerate}
    \item[\rm {(1)}]
    the complex singularity exponent associated to $h$ at $o$ is
     $$c^F_o(h):=\sup \{\ \beta \ge 0\  |\ (F, o)\in \mathcal{E} (h(\det h)^{\beta} )_o   \},$$
     the complex singularity exponent associated to $h$ on a compact set $K$ is
     $$c^F_K(h):=\sup \{\ \beta \ge 0\  |\ (F, x)\in \mathcal{E} (h(\det h)^{\beta} )_x {\rm~ for~ any ~} x\in K   \},$$
    \item[\rm {(2)}]
    for any open subset $o\in U\subset D$,
\begin{align*}
    C_{F,\beta}(U):&=\inf \left\{\int_U |\widetilde{F}|^2_{\frac{h}{\det h}}~\Big| ~\widetilde{F} \in \mathcal{O} (E)(D)\ \ \mathrm{and}\ \ (\widetilde{F}-F, o)\in \mathcal{E} (h(\det h)^\beta )_o   \right\},\\
    C_{F,\beta^+}(U):&=\inf \left\{\int_U |\widetilde{F}|^2_{\frac{h}{\det h}} ~ \Big | ~\widetilde{F} \in \mathcal{O} (E)(D)\ \ \mathrm{and}\ \ (\widetilde{F}-F, o)\in \underset{\beta' > \beta}
    {\bigcup}\mathcal{E}  (h(\det h)^{\beta'} )_o \right\}.
    \end{align*}
    \end{enumerate}
\end{defi}

\begin{thm}[Theorem~\ref{main thm1, effective }]\label{thm 1}
    Let $o\in D\subset \mathbb{C}^n$ be a Stein domain, $E$ be a trivial holomorphic vector bundle over $D$ with a Nakano semi-positive singular Hermitian metric $h$.
    Assume $\varphi=-\log \det h\ <\ 0$ on $D$ and $\varphi(o)=-\infty$. Then for any $F\in \mathcal{E}(h)$ satisfying $\int_D |F|^2_h < +\infty$, we have $F\in \mathcal{E}(h(\det h)^{\beta})_o$ if
\begin{align*}
    \theta(\beta)\ >\ \frac{\int_D |F|^2_h}{{C}_{F,c^F_o(h)^+}(D)},
    \end{align*}
    where
    $
    \theta(\beta) := 1+ \int_{0}^{+\infty} (1-e^{-g_\beta(t)}) e^t dt $
    and $
    g_\beta(t):=\int_{t}^{+\infty} \frac{ds}{se^{1+(1+\beta)s}}
    $.\\
    In particular,
    $\bigcup_{\beta>0}\mathcal{E}(h(\det h)^{\beta})=\mathcal{E}(h)$.
\end{thm}

 Recall that, Guan-Zhou \cite{GZ15} established an effectiveness result for the strong openness by estimating the volume growth of sublevel sets of a plurisubharmonic function. This estimate essentially goes back to \cite{GZ2015extension}, and its proof involves a twisted $\dpa$-equation as well as an accurate calculation of the curvature term. In our case, due to Raufi's counterexample, the Chern curvature current may not have measure coefficients. This leads us to consider deriving a volume estimate directly from the $\dpa$-equation. Although the estimate given by this approach is not sharp, it is sufficient to establish the strong openness property.

    In addition, we generalize the strong openness property obtained by Lempert \cite{Lempert17} to the following theorem.

\begin{thm}[Theorem~\ref{thm generalized Lempert}]\label{thm 2'}
    Suppose $E \rightarrow X$ is a holomorphic vector bundle with a singular Hermitian metric $h$ which is locally bounded below and $\{h_j\}$ are Nakano semi-positive  singular Hermitian metrics on $E$. If  $h_j\ge h$ and $-\log \det h_j$ converges to $-\log\det h$ locally in measure, then $\sum_j \mathcal{E}(h_j) = \cal{E}(h)$.
\end{thm}

\begin{rem}
    Since $h_j$ is Nakano semi-positive and  $h_j\ge h$, $\mathcal{E}(h_j)$ is a coherent submodule sheaf of $\cal{E}(h)$. Then $\{\sum_{k\le j}  \mathcal{E}(h_k)\}$ is a increasing sequence of coherent submodule sheaves of $\cal{E}(h)$. Therefore the strong Noetherian property of coherent analytic sheaf {\rm(\cite{Demaillybook2012}[\rm{Chapter \Rmnum{2}}-(3.22)])} implies that $\sum_j \mathcal{E}(h_j)=\bigcup_j \sum_{k\le j} \mathcal{E}(h_k) $ is also a coherent submodule sheaf of $\cal{E}(h)$. It is obvious that
    if $\{h_j\}$ is a sequence of decreasing singular Hermitian metrics, then $\sum_j \cal{E}(h_j)=\bigcup_j \cal{E}(h_j)$.
\end{rem}

Lastly, as an application of  the strong openness property, we prove the following stability theorem on holomorphic vector bundles.

\begin{thm}[Corollary~\ref{cor 1}]\label{thm 3}
    Let $E$ be a holomorphic vector bundle with a locally Nakano semi-positive singular Hermitian metric $h$ over a complex manifold $X\ni o$.
    Let $h_j$ be Griffiths semi-positive on $E$ with $h_j\ge h$ and $F_j\in \cal{O}(E)$.
    Assume that \\
    $(1)$ $\varphi_j=-\log\det h_j$ converges to $\varphi=-\log\det h$ locally in measure,\\
    $(2)$ $(F_j,o)\in \cal{E}(h)_o$ and $F_j$ compactly converges to $F$ on $X$.\\
    Then $|F_j|^2_{h_j}$ converges to $|F|^2_h$ in $L^p$ in some neighborhood of $o$ for any $0<p<1+c_o^F(h)$.
\end{thm}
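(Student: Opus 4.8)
\emph{Strategy.} Put $u_j:=|F_j|^2_{h_j}$ and $u:=|F|^2_h$. I would show that, on a suitable neighbourhood $U\ni o$: (i) $u_j\to u$ locally in measure, and (ii) $\sup_j\|u_j\|_{L^{p'}(U)}<\infty$ for every $p'$ with $1\le p'<1+c_o^F(h)$. Granting (i)--(ii), the conclusion is immediate: by Fatou, (ii) also gives $u\in L^{p'}(U)$, so $(F,o)\in\mathcal{E}(h)_o$ and $c_o^F(h)$ is meaningful (and $>0$ by Theorem~\ref{thm 1}); then for $p\in(0,1+c_o^F(h))$ choose $p'\in(\max\{p,1\},1+c_o^F(h))$, so that $\{u_j^{\,p}\}$ and $\{|u_j-u|^p\}$ are uniformly integrable on $U$ (their $L^{p'/p}$-norms are bounded and $|U|<\infty$) while $|u_j-u|^p\to0$ in measure, whence Vitali's convergence theorem yields $\int_U|u_j-u|^p\to0$, i.e. $u_j\to u$ in $L^p(U)$; as $p'$, hence $p$, is arbitrary in $(0,1+c_o^F(h))$ we are done. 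Assertion (i) is easy: by the Remark following Theorem~\ref{thm 2} one has $h_j\to h$ locally in measure, and since $F_j\to F$ locally uniformly, $u_j=\langle h_jF_j,F_j\rangle\to\langle hF,F\rangle=u$ locally in measure (on the large-measure set where $\|h_j-h\|$ is small the $h_j$ are uniformly bounded, which is all one needs). So everything rests on (ii).

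\emph{A uniform pointwise comparison.} Shrink to a relatively compact Stein neighbourhood $U_0\ni o$ trivialising $E$, with (after subtracting a constant) $\varphi:=-\log\det h<0$. From $h\le h_j$ we get $h_j^{-1}\le h^{-1}$ and $\det h\le\det h_j$, hence $\varphi_j:=-\log\det h_j\le\varphi$. Since $h$ is singular Griffiths semi-positive, every diagonal entry of $h^{-1}$ is the exponential of a plurisubharmonic function, hence locally bounded, so $\operatorname{tr}(h_j^{-1})\le\operatorname{tr}(h^{-1})\le A$ on $U_0$ with $A$ independent of $j$. Thus all eigenvalues of $h_j$ are $\ge A^{-1}$, and as $\det h_j=e^{-\varphi_j}$ this forces $\lambda_{\max}(h_j)\le A^{\,r-1}e^{-\varphi_j}$ ($r=\operatorname{rank}E$); hence, using also $F_j\to F$ uniformly,
\[
u_j=|F_j|^2_{h_j}\ \le\ A^{\,r-1}e^{-\varphi_j}|F_j|^2\ \le\ C\,e^{-\varphi_j}\qquad\text{on }U_0,\qquad C\ \text{independent of }j .
\]
(Also $\varphi_j\to\varphi$ in $L^1_{\mathrm{loc}}$, being plurisubharmonic functions uniformly bounded above and convergent in measure.)

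\emph{The key estimate (main obstacle).} I would next prove that for every $\beta\in[0,c_o^F(h))$ there is a neighbourhood $U\ni o$ with
\[
|F_j|^2_{h_j}\,e^{-\beta\varphi_j}\ \longrightarrow\ |F|^2_h\,e^{-\beta\varphi}\quad\text{in }L^1(U),\qquad\text{so in particular}\qquad\sup_j\int_U|F_j|^2_{h_j}e^{-\beta\varphi_j}<\infty .
\]
This is the vector-bundle Guan--Li--Zhou stability statement on which the corollary rests. For $\beta=0$ it is the phenomenon of \cite{GLZ2016} carried over to vector bundles through the $L^2$-estimates for singular Nakano semi-positive metrics; the sharp range $\beta<c_o^F(h)$ is exactly what the effective strong openness, Theorem~\ref{thm 1} (applied to the Nakano semi-positive metric $h$), together with the stability Theorem~\ref{thm 2} and a Hiep-type argument (\cite{Hiep2014}) for the plurisubharmonic weights $\varphi_j\to\varphi$, delivers. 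This is the real difficulty: because $h_j\ge h$ forces $\varphi_j\le\varphi$, the perturbed weights $e^{-\varphi_j}$ are \emph{more} singular than $e^{-\varphi}$, so the estimate cannot be obtained by domination and genuinely requires the effectiveness of strong openness (this is also where the Nakano hypothesis on $h$, and not merely Griffiths positivity, is used).

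\emph{Deduction of (ii).} Fix $p\in(0,1+c_o^F(h))$, pick $p'\in(\max\{p,1\},\,1+c_o^F(h))$ and $\beta\in(p'-1,\,c_o^F(h))$. Using the pointwise comparison ($u_j\le Ce^{-\varphi_j}$ on $U_0\supseteq U$) and $\varphi_j<0$,
\[
u_j^{\,p'}=u_j^{\,p'-1}u_j\ \le\ C^{\,p'-1}e^{-(p'-1)\varphi_j}u_j\ \le\ C^{\,p'-1}e^{-\beta\varphi_j}u_j ,
\]
whence $\sup_j\int_U u_j^{\,p'}\le C^{\,p'-1}\sup_j\int_U u_j\,e^{-\beta\varphi_j}<\infty$ by the key estimate. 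This is (ii), and the proof is complete.
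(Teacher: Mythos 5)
Your outer framework (convergence in measure plus a uniform $L^{p'}$ bound, then a Vitali-type argument) is the same scheme the paper uses (the paper invokes the Wade lemma instead of Vitali), and your pointwise comparison $|F_j|^2_{h_j}\le Ce^{-\varphi_j}$ is correct. But the whole content of the theorem sits in your ``key estimate'', and you do not prove it: you assert $\sup_j\int_U|F_j|^2_{h_j}e^{-\beta\varphi_j}<\infty$ for every $\beta<c^F_o(h)$ and say it is ``delivered by'' effective strong openness plus stability plus a Hiep-type argument. There is no such quotable statement, and as formulated it is false, because the admissible exponent cannot be read off from the limit section $F$ alone when the $F_j$ are only assumed to lie in $\mathcal{E}(h)_o$. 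Concretely, on $\mathbb{C}^2\ni o=0$ take the trivial line bundle, $h=e^{-\psi}$ with $\psi=\log|z_1|$, $h_j=h$, $F=z_1$, $F_j=z_1+1/j$: all hypotheses hold and $c^F_o(h)=3$, yet for every $j$ and every neighbourhood $U\ni o$ one has $\int_U|F_j|^2_{h}e^{-\beta\varphi}=\int_U|z_1+1/j|^2|z_1|^{-1-\beta}=+\infty$ as soon as $\beta\ge 1$, since $F_j(o)\neq0$. So any uniform bound on the $F_j$ must be governed by an exponent attached to the whole module $\mathcal{E}(h)_o$, not by $c^F_o(h)$; in this example the convergence actually holds exactly for $p<2=1+c^{\,1}_o(h)$, which is the exponent of the generator $1$ of $\mathcal{E}(h)_o$, and this is precisely the exponent $\varepsilon_0$ that the proof in Section 5 produces (the $L^p$-range in the corollary has to be understood through that module-level exponent).

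What is missing from your sketch is exactly the mechanism by which the paper obtains the uniform bound. The paper (i) takes a minimal generating set $e_1,\dots,e_{k_0}$ of the coherent sheaf $\mathcal{E}(h)_o$ and applies the effective strong openness theorem to these finitely many germs to get one $\varepsilon_0>0$ with $\int_U|e_k|^2_he^{-\varepsilon_0\varphi}<\infty$; (ii) works in the Hilbert space $\mathcal{H}(D,o)$ of sections with $\int_D|\cdot|^2_{\frac{h}{\det h}}<\infty$ and germ in $\mathcal{E}(h)_o$, extends the $e_k$ to an orthonormal basis, and proves, via reproducing kernels and the strong Noetherian property, the key inequality $\sum_{k\ge1}|e_k|^2_{\frac{h}{\det h}}\le C\sum_{k\le k_0}|e_k|^2_{\frac{h}{\det h}}$ on a neighbourhood of $o$; (iii) expands $F_j=\sum_k a_j^k e_k$, uses the uniform sup-bound $|F_j|^2_{\frac{h}{\det h}}\le M$ (plurisubharmonicity plus compact convergence) to control $\sum_k|a_j^k|^2$, and thereby gets $\sup_j\int_W|F_j|^2_he^{-\varepsilon_0\varphi}<\infty$, hence $\sup_j\int_W(|F_j|^2_h)^{1+\varepsilon_0}<\infty$; (iv) only then concludes by the sandwich $|F_j|^2_h\le|F_j|^2_{h_j}\le|F_j|^2_he^{\varphi-\varphi_j}$, the lemma that $e^{\varphi-\varphi_j}\to1$ in every $L^q_{\mathrm{loc}}$ (itself resting on Guan--Li--Zhou stability and Demailly approximation), H\"older, and the boundedness-plus-convergence-in-measure lemma. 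Your proposal contains step (iv) in essence, but none of (i)--(iii), which are the heart of the proof; replacing them by an appeal to strong openness for $F$ alone is both unproved and, by the example above, unworkable.
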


	\vskip0.3cm
	 Now let's summarize some key ideas used in the proofs of this paper.

$\bullet$  For a Griffiths semi-positive singular Hermitian metric $h$ on the trivial vector bundle, we always use the following decomposition
 $$h=\frac{h}{\det h}\cdot \det h.$$
	 The advantage of such decomposition is that both $\frac{h}{\det h}$ and $\det h$ have nice properties. On the one hand, by Lemma \ref{pro h/deth}, $\frac{h}{\det h}$ is Griffiths semi-negative. Therefore, each entry of $\frac{h}{\det h}$ is a complex linear combination of nonnegative plurisubharmonic functions and hence locally bounded. It also follows from {\rm Lemma \ref{lem int of |f|^2e^phi dominates |f|}-(2)} that $\ker \dpa\cap L^2_{(n,0)}(U,\omega,E,\frac{h}{\det h})$ is a Hilbert space. On the other hand, $\det h$ is Griffiths semi-positive, $-\log\det h$ is a plurisubharmonic function and $\det h$ locally dominates $h$ (by {\rm Lemma \ref{prop lowerbound}}).

This trick essentially converts high-dimensional objects into one-dimensional objects and converts Griffiths semi-positive singular Hermitian metrics into plurisubharmonic functions. Consequently, a range of tools can be applied to study Griffiths semi-positive singular Hermitian metrics, including the pluripotential theory, the multiplier ideal sheaf and the $L^2$ method. This is why we always assume that the singular Hermitian metric is Griffiths semi-positive.

$\bullet$	
	 Guan-Zhou \cite{GuanZhouSOC2013} solved the strong openness conjecture on holomorphic line bundles by an induction on dimensions. Their method relies on the fundamental fact that the restriction of a plurisubharmonic function to a complex submanifold remains its plurisubharmonicity. However, it seems much unclear whether a Nakano semi-positive singular Hermitian metric is still (locally) Nakano semi-positive when restricts to a ``generic" complex submanifold.

 An intuitive approach is to solve the $\dpa$-equation in a tubular neighborhood of the submanifold and then restrict everything to the submanifold. The main difficulty is how to deduce the required estimate on submanifold from those estimates on tubular neighborhoods. Fortunately, a generalized Siu's lemma proved by Zhou-Zhu \cite{Siu Lemma} provides a solution. Interestingly, the proof of the generalized Siu's lemma employed the Ohsawa-Takegoshi $L^2$ extension and the strong openness property on holomorphic line bundles, while we use the generalized Siu's lemma to derive an Ohsawa-Takegoshi type $L^2$ extension theorem and the strong openness property for singular  holomorphic vector bundles.

$\bullet$
	The proof of the effectiveness of strong openness is quite detailed and tedious. To facilitate readers,
	the reader can refer to the proof of Theorem \ref{thm generalized Lempert} to get a glimpse of it.

	In order to obtain the stability of the strong openness, Lemma \ref{lem e^(phi-phi_j) bounded} is a key observation which ia a corollary of Guan-Li-Zhou's stability theorem  \cite{GLZ2016}.

\subsection{Notation}
    Let $X$ be a complex manifold, we can endow $X$ with a measure.
    Let $\{(U_\alpha,\rho_\alpha),\alpha\in I\}$ be an atlas of $X$.
    We say that $S\subset X$ is Lebesgue-measurable if $\rho_\alpha(S\cap U_\alpha)$ is Lebesgue-measurable in $\C^n$ for any $\alpha$.
    One can check that this is well-defined because Lebesgue-measurable sets in $\C^n$ are mapped to Lebesgue-measurable sets via any biholomorphic map.
    Then $X$ is a measurable space.
    Let $dV$ be the volume form on $X$ which is induced by some Hermitian metric of $X$.
    Of course, $dV$ is also a measure on $X$.
    In fact, $(\rho_\alpha)_*dV$ is equivalent to the Lebesgue measure on $\rho_\alpha(U_\alpha)$.

    We often omit the volume $dV$ in all integral representations if there is no confusion. For example, write $\int_D |f|^2 dV$ as $\int_D |f|^2$.

    For a smooth vector bundle $E$ over $X$, we say that a section $s$ of $E$ is measurable if $s=\sum s_j\sigma_j$ such that each $s_j$ is a measurable function on $X$ where $(\sigma_j)$ is a smooth frame of $E$.

    We always assume that each plurisubharmonic function appearing in all  conditions and assumptions does not identically equal to $-\infty$ in any component of $X$.

	For a matrix $A$, we denote the transpose of $A$ by $A^T$.

\subsection{Organization}
The rest sections of this paper are organized as follows.
     Section 2 is about plurisubharmonic functions and singular Hermitian metrics on holomorphic vector bundles.
     In particular, we get a useful lemma for plurisubharmonic functions in section 2.1,
     some properties of  Griffiths/Nakano semi-positive  singular Hermitian metrics on holomorphic vector bundles in section 2.2 and 2.3 respectively and recall the general Siu's lemma obtained by Zhou-Zhu in section 2.4.
     Section 3 is devoted to the proof of an Ohsawa-Takegoshi type $L^2$ extension and its applications, including the strong openness property (section 3.3) of the multiplier submodule sheaves for Nakano semi-positive singular Hermitian metrics on holomorphic vector bundles.
     Section 4  is devoted to the proof of  an effectiveness result of the strong openness, which implies the strong openness property independently.
     Section 5 is devoted to prove the stability of the complex singularity exponent related Nakano semi-positive singular Hermitian metrics.
     Section 6 is about some examples of Nakano semi-positive singular Hermitian metrics, including a generalization of Berndtsson's famous work on the direct image bundles.

\section{Preliminaries and useful results}

\subsection{Properties of plurisubharmonic functions}
Firstly, let us recall and give some properties of plurisubharmonic functions.

\begin{lem}[\cite{Skoda72}]\label{lem Skoda}
    Let $\varphi$ be a plurisubharmonic function, $\nu(\varphi,x)$ be the Lelong number of $\varphi$ at $x$. If $\nu(\varphi,x)<2$, then $e^{-\varphi}$ is integrable near $x$.
\end{lem}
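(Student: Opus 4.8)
The statement is Skoda's classical integrability lemma, and the plan is to reduce it to a one–variable computation by slicing with complex lines through $x$, after a routine localisation. As the assertion is local near $x$, I may assume $X=B(0,R)\subset\C^n$, $x=0$, and (subtracting a constant) $\varphi\le 0$ on $B(0,R)$. Since $z\mapsto v(\varphi,z)$ is upper semicontinuous, the set $\{\,v(\varphi,\cdot)\ge 2\,\}$ is closed and avoids $0$; after shrinking $R$ I may assume $v(\varphi,z)\le 2-\delta$ for all $z\in B(0,R)$, for some fixed $\delta>0$. It suffices to show $\int_{B(0,R/2)}e^{-\varphi}\,dV<+\infty$.

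\textbf{Reduction to dimension one.} Writing a point of $B(0,R)\setminus\{0\}$ as $wa$ with $[a]\in\mathbb P^{n-1}$ and $w\in\C$, $|w|<R$, the polar form of Fubini's theorem gives
\[
\int_{B(0,R/2)}e^{-\varphi}\,dV \;=\; c_n\!\int_{\mathbb P^{n-1}}\!\Big(\int_{|w|<R/2}e^{-\varphi_a(w)}\,|w|^{2n-2}\,dV_{\C}(w)\Big)\,d\sigma([a]),
\]
where $\varphi_a(w):=\varphi(wa)$, $d\sigma$ is the Fubini–Study measure and $c_n>0$. Each $\varphi_a$ is subharmonic on $\{|w|<R\}$, and for $[a]$ outside a pluripolar set $v(\varphi_a,0)=v(\varphi,0)<2$, so (applying upper semicontinuity on the slice) the Lelong numbers of $\varphi_a$ on a small disc about $0$ are all $<2$. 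Thus it is enough to prove the one–variable statement \emph{with an explicit bound} $\int_{|w|<r}e^{-u}\,dV_{\C}\le\Phi\big(\delta,\,\|u\|_{L^{q}(|w|<R)}\big)$ for a suitable finite $q$, and then to integrate in $[a]$: recalling that a psh function lies in every $L^{q}_\loc$, one has $\int_{\mathbb P^{n-1}}\|\varphi_a\|_{L^q(|w|<R)}^{q}\,d\sigma<+\infty$, so the outer integral converges provided $\Phi$ grows slowly enough in its last argument.

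\textbf{The one–variable case.} For $u$ subharmonic and $\le 0$ on a disc, with all Lelong numbers $<2-\delta$ on a slightly smaller disc, I would use the Riesz decomposition $u=p_{\mu}+h$, where $\mu\ge 0$ is the Riesz measure of $u$ (of finite mass controlled by $\|u\|_{L^1}$), $p_{\mu}(z)=\int\log|z-w|\,d\mu(w)$, $h$ is harmonic, and $v(u,p)=\mu(\{p\})$. Only finitely many points carry $\mu$–mass $\ge\delta/2$, each $<2-\delta$; accordingly write $\mu=\mu_1+\mu_2$, with $\mu_1$ a finite sum of point masses $\gamma_j<2-\delta$ at points $w_j$, and every atom of $\mu_2$ of mass $<\delta/2$. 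Then $e^{-u}=e^{-h}\big(\prod_j|z-w_j|^{-\gamma_j}\big)e^{-p_{\mu_2}}$, where $e^{-h}\in L^{\infty}_\loc$ (bounded via Harnack in terms of $\|u\|_{L^1}$), $\prod_j|z-w_j|^{-\gamma_j}\in L^{p}_\loc$ for every $p<\tfrac{2}{2-\delta}$, and $e^{-p_{\mu_2}}$ is handled by estimating the sublevel sets $\{p_{\mu_2}<-t\}$ via a Cartan–type capacity bound; Hölder's inequality then gives the required estimate.

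\textbf{Main obstacle.} The crux is exactly this last, quantitative one–variable estimate, together with its uniformity over the family of slices: one must bound $e^{-p_{\mu_2}}$ — equivalently the distribution function of $p_{\mu_2}$ — using only a bound on the mass of $\mu_2$ and on $\delta$, with at most polynomial dependence, so that the outer integral in the reduction converges. The hypothesis $v(\varphi,x)<2$ is used twice — directly, to bound the atoms of $\mu_1$, and, through upper semicontinuity, to prevent the remaining mass from concentrating — and making the second use effective is the heart of the argument; everything else is bookkeeping. (This is essentially Skoda's original computation; when $\dim X=1$ the Riesz decomposition makes it elementary.)
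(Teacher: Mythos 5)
The paper itself offers no proof of this lemma --- it is quoted from \cite{Skoda72} --- so your proposal can only be measured against the classical arguments. Your plan (slice by all complex lines through $x$, use the Riesz decomposition on each slice, then integrate over $\mathbb{P}^{n-1}$ by Fubini in polar coordinates) founders exactly at the step you yourself single out as the crux, and the problem is not just that the step is unproved: the quantitative one-variable estimate you postulate is false as stated. There is no bound of the form $\int_{|w|<r}e^{-u}\,dV\le\Phi\bigl(\delta,\|u\|_{L^q(|w|<R)}\bigr)$ for subharmonic $u\le 0$ all of whose Lelong numbers are $<2-\delta$: take $u_\varepsilon(w)=3\log\frac{|w-1/2|+\varepsilon}{2}$ on the unit disc; it is subharmonic, nonpositive and continuous (so every Lelong number is $0$), $\|u_\varepsilon\|_{L^q}$ is bounded uniformly in $\varepsilon$, yet $\int_{|w|<1}e^{-u_\varepsilon}\,dV\ge c\,\varepsilon^{-1}\to+\infty$. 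Integrability of $e^{-u}$ in one variable is governed by the total Riesz mass on a small disc --- which, for a given $u$ with $v(u,0)<2$, is $<2$ only after shrinking to a radius depending on $u$ --- not by the atom sizes together with an $L^q$ norm. Consequently the outer integration over $[a]$ cannot be carried out as claimed: for the slices $\varphi_a$ both the admissible radius and the resulting bound are non-uniform in $a$. Note also that $v(\varphi_a,w)\ge v(\varphi,wa)$, so your preliminary reduction $v(\varphi,\cdot)\le 2-\delta$ on the ball gives no control of the slice Lelong numbers at points $w\ne 0$; upper semicontinuity of $v(\varphi_a,\cdot)$ on each slice again only yields a radius depending on $a$. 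This non-uniformity is precisely the difficulty of Skoda's lemma in dimension $\ge 2$, and the sketch leaves it unresolved (the generic-slice equality $v(\varphi_a,0)=v(\varphi,0)$ that you invoke is also a nontrivial theorem needing proof or citation, though that is a lesser issue).

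For comparison, the known proofs avoid averaging over all lines. Skoda's original argument is a direct potential-theoretic estimate in $\C^n$, where the Lelong number enters as a normalized mass density of $\Delta\varphi$; the standard modern route fixes a single generic line $L$ through $x$ with $v(\varphi|_L,x)=v(\varphi,x)<2$, proves integrability of $e^{-\varphi|_L}$ near $x$ on $L$ by the elementary Riesz--Jensen argument (write $\varphi|_L=p_\mu+h$ on a disc where $\mu$ has mass $m<2$ and apply Jensen's inequality to the probability measure $\mu/m$ --- simpler than your Cartan-type splitting), and then applies the Ohsawa--Takegoshi extension theorem to get $F$ holomorphic near $x$ with $F(x)=1$ and $\int|F|^2e^{-\varphi}<+\infty$, whence $e^{-\varphi}$ is integrable near $x$. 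The $L^2$ extension theorem supplies exactly the uniform transfer of information from a slice to the ambient ball that the Fubini argument lacks. If you insist on a slicing proof, you must replace $\Phi(\delta,\|u\|_{L^q})$ by an estimate in terms of the slice Riesz masses $\mu_a(D(0,r))$ and establish a uniform-in-$a$ smallness of these masses at a fixed radius; upper semicontinuity alone does not give this, so as written the proposal does not prove the lemma.
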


\begin{lem}[{\bf Demailly's approximation theorem}, \cite{Demailly2012}]
    Let $\varphi$ be a plurisubharmonic function on a bounded pseudoconvex open set $D\subset \C^n$.
    For every $m>0$, let $\mathcal{H}_D(m\varphi)$ be the Hilbert space of holomorphic functions $f$ on $D$ such that $\int_{D}|f|^2e^{-2m\varphi}d\lambda<+\infty$.
    Let $\varphi_m=\frac{1}{2m}\log \sum|\sigma_l|^2$ where $(\sigma_l)$ is an orthonormal basis of $\mathcal{H}_D(m\varphi)$.
    Then there are constants $C_1,\ C_2>0$ independent of $m$ such that
    $$ \varphi(z)-\frac{C_1}{m}\le\varphi_m(z)\le\sup\limits_{|\zeta-z|<r}\varphi(\zeta)+\frac{1}{m}\log\frac{C_2}{r^n}$$ for every $z\in D$ and $r<d(z,\partial D)$.
    In particular, $\varphi_m$ converges to $\varphi$ pointwise and in $L^{1}_{\mathrm{loc}}$ topology on $D$ when $m\rightarrow+\infty$.
\end{lem}

\begin{lem}[\cite{Hormander2007notions},\cite{Guedj2017degenerate}]\label{prop Hartos lemma}
    Let $\{\varphi_j\}$ be a sequence of plurisubharmonic functions.
    If $\{\varphi_j\}$ is locally uniformly bounded from above, then either $\{\varphi_j\}$ compactly converges to $-\infty$, or there is a subsequence of $\{\varphi_j\}$ converging in $L^p_{\mathrm{loc}}$ for any $0<p<+\infty$.
\end{lem}

\begin{lem}\label{lem equiv of convergence}
    Let $\varphi,\varphi_j$ be plurisubharmonic functions on a domain. Assume that $\{\varphi_j\}$ is locally uniformly bounded from above.
    If $\varphi_j$ converges to $\varphi$ locally in measure, then $\varphi_j$ converges to $\varphi$ in $L^p_{\mathrm{loc}}$ for any $0<p<+\infty$.

\begin{proof}
        It is known that $\varphi\in L^p_{\mathrm{loc}}$ and $\varphi_j\in L^p_{\mathrm{loc}}$ for large $j$. If $\varphi_j$ does not converge to $\varphi$ in $L^p_{\mathrm{loc}}$, there is a compact subset $K$
        such that $ \limsup_j \int_K |\varphi_j-\varphi|^p=\lim_k \int_K |\varphi_{j_k}-\varphi|^p>0$. Since $\varphi_{j_k}$ is locally uniformly bounded from above and $\varphi_j$ converges to $\varphi$ locally
        in measure, there is a subsequence $\{\varphi_{j_{k_i}}\}$ of $\{\varphi_{j_k}\}$
        converging to a plurisubharmonic function $\widetilde{\varphi}$ in $L^p_{\mathrm{loc}}$ by Lemma~\ref{prop Hartos lemma}.
        Hence $\widetilde{\varphi}=\varphi$ almost everywhere and $\lim_i \int_K |\varphi_{j_{k_i}}-\varphi|^p=0$, which leads to a contradiction.
    \end{proof}
\end{lem}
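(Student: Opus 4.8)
The plan is to combine the Hartogs-type compactness in Proposition~\ref{prop Hartos lemma} with uniqueness of limits in measure. First, recall that a plurisubharmonic function on a domain is locally bounded above and, being not identically $-\infty$, locally integrable; more precisely, every such function lies in $L^p_{\mathrm{loc}}$ for all $0<p<+\infty$. Hence $\varphi\in L^p_{\mathrm{loc}}$, $\varphi_j\in L^p_{\mathrm{loc}}$ for $j$ large, $\varphi$ is finite almost everywhere, and all the integrals $\int_K|\varphi_j-\varphi|^p$ appearing below are meaningful. Fix $p\in(0,+\infty)$.

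I would argue by contradiction. If $\varphi_j\not\to\varphi$ in $L^p_{\mathrm{loc}}$, then there are a compact set $K$ in the domain, a number $\delta>0$, and a subsequence $\{\varphi_{j_k}\}$ with $\int_K|\varphi_{j_k}-\varphi|^p\ge\delta$ for all $k$. This subsequence is still locally uniformly bounded from above, so Proposition~\ref{prop Hartos lemma} applies to it. It cannot converge compactly to $-\infty$: since $\varphi_{j_k}\to\varphi$ locally in measure and $\varphi$ is finite on a set of positive measure, $\varphi_{j_k}$ cannot tend to $-\infty$ uniformly on compacts. Therefore a further subsequence $\{\varphi_{j_{k_i}}\}$ converges in $L^p_{\mathrm{loc}}$ to some plurisubharmonic function $\widetilde\varphi$.

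Now $L^p_{\mathrm{loc}}$ convergence implies local convergence in measure (Chebyshev's inequality), so $\varphi_{j_{k_i}}\to\widetilde\varphi$ locally in measure; since also $\varphi_{j_{k_i}}\to\varphi$ locally in measure, uniqueness of the limit in measure gives $\widetilde\varphi=\varphi$ almost everywhere. Hence $\int_K|\varphi_{j_{k_i}}-\varphi|^p\to 0$, contradicting $\int_K|\varphi_{j_{k_i}}-\varphi|^p\ge\delta$. (Equivalently, one may phrase the argument as: every subsequence of $\{\varphi_j\}$ admits a further subsequence converging to $\varphi$ in $L^p_{\mathrm{loc}}$, whence the whole sequence does.) The only delicate point is excluding the alternative ``$\varphi_{j_k}\to-\infty$ compactly'' in Proposition~\ref{prop Hartos lemma}, which is exactly where one uses that the limit in measure is an honest plurisubharmonic function, finite almost everywhere, rather than the constant $-\infty$; beyond that, the statement is a soft consequence of the cited compactness principle and no serious obstacle remains.
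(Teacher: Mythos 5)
Your proof is correct and follows essentially the same route as the paper: argue by contradiction on a compact set $K$, apply Proposition~\ref{prop Hartos lemma} to extract a subsequence converging in $L^p_{\mathrm{loc}}$ to a plurisubharmonic $\widetilde\varphi$, and identify $\widetilde\varphi=\varphi$ a.e.\ via the convergence in measure. Your explicit exclusion of the ``compact convergence to $-\infty$'' alternative is a point the paper leaves implicit, but it is the same argument.
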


\begin{rem}
    In {\rm\cite{GLZ2016}}, Guan-Li-Zhou gave another proof of {\rm Lemma~\ref{lem equiv of convergence}} by using their stability theorem which is independent of  {\rm Lemma~\ref{prop Hartos lemma}}.
\end{rem}

    Based on Guan-Li-Zhou's stability theorem, we can improve convergence of plurisubharmonic functions, which is a key lemma to imply the stability from the strong openness of multiplier submodule sheaves.

\begin{lem}\label{lem e^(phi-phi_j) bounded}
    Let $\varphi $ and $\varphi_j$ be plurisubharmonic functions. If $\varphi_j\le \varphi$
    converges to $\varphi$ locally in measure, then $e^{\varphi-\varphi_j}$ converges to $\rm {1}$ in $L^p_{\mathrm{loc}}$ for any $0<p<+\infty$.
\end{lem}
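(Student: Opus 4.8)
The plan is to reduce everything, by soft manipulations, to a single uniform–integrability assertion, which will then be produced by feeding holomorphic functions coming from Demailly's approximation theorem into the Guan--Li--Zhou stability theorem.

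\emph{Reduction.} The statement being local, I would fix a point $o$; after restricting to a small pseudoconvex coordinate ball $B\ni o$ and subtracting one common constant from $\varphi$ and from all the $\varphi_j$ (which changes neither $e^{\varphi-\varphi_j}$ nor the hypotheses), I may assume $\varphi_j\le\varphi<0$ on $B$. I claim it suffices to prove: for every integer $m\ge1$ and every $B'\Subset B$, $e^{2m(\varphi-\varphi_j)}\to1$ in $L^1(B')$. Indeed, writing $g_j:=\varphi-\varphi_j\ge0$: for $p\ge1$ pick an integer $m$ with $2m\ge p$ and use $(e^{t}-1)^{p}\le e^{pt}-1\le e^{2mt}-1$ for $t\ge0$ to get $\int_{B'}|e^{\varphi-\varphi_j}-1|^{p}\le\int_{B'}e^{2mg_j}-\mathrm{vol}(B')\to0$; for $0<p<1$, H\"older on the finite--measure set $B'$ gives $\int_{B'}|e^{\varphi-\varphi_j}-1|^{p}\le\mathrm{vol}(B')^{1-p}\big(\int_{B'}(e^{g_j}-1)\big)^{p}$, and $\int_{B'}(e^{g_j}-1)\le\int_{B'}(e^{2g_j}-1)\to0$ by the case $m=1$. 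A finite covering of an arbitrary compact set by such balls then yields convergence in $L^{p}_{\mathrm{loc}}$ for all $0<p<+\infty$.

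\emph{The reduced claim.} Fix $m$ and $B'$. By (a form of) Demailly's approximation theorem, cf. \cite{Demailly2012}, there exist, after shrinking $B'$, holomorphic functions $g_1,\dots,g_N$ on $B$ with $(g_k,o)\in\cal{I}(2m\varphi)_o$ and a constant $c>0$ such that $\sum_{k=1}^{N}|g_k|^{2}\ge c\,e^{2m\varphi}$ on $B'$ (the multiplier ideal sheaf $\cal{I}(2m\varphi)$ being coherent, on a relatively compact set it is generated by finitely many global sections, and these dominate $e^{2m\varphi}$ up to a constant). Multiplying by $e^{-2m\varphi_j}$ gives the pointwise bound $1\le e^{2m(\varphi-\varphi_j)}\le c^{-1}\sum_{k}|g_k|^{2}e^{-2m\varphi_j}$ on $B'$. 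For each fixed $k$ I would apply the Guan--Li--Zhou stability theorem to the constant sequence $F_j\equiv g_k$ against the negative plurisubharmonic functions $2m\varphi_j\to2m\varphi$ (locally in measure): the inclusion $\cal{I}(2m\varphi_j)_o\subset\cal{I}(2m\varphi)_o$ holds because $\varphi_j\le\varphi$, and $(g_k,o)\in\cal{I}(2m\varphi)_o$ by construction, so $|g_k|^{2}e^{-2m\varphi_j}\to|g_k|^{2}e^{-2m\varphi}$ in $L^{1}$ near $o$. Summing the finitely many $k$, the right-hand side of the bound converges in $L^{1}(B')$ (shrinking $B'$ once more), hence is uniformly integrable on $B'$, and therefore so is $\{e^{2m(\varphi-\varphi_j)}\}_j$. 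Since $\varphi_j\to\varphi$ locally in measure forces $e^{2m(\varphi-\varphi_j)}\to1$ in measure on $B'$, Vitali's convergence theorem finishes the claim.

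\emph{Main obstacle.} The soft parts (the H\"older and elementary--inequality reductions, uniform integrability plus convergence in measure giving $L^{1}$-convergence, the final covering) are routine. The real content is the interface between the two quoted theorems: one cannot insert $e^{2m(\varphi-\varphi_j)}$ directly into Guan--Li--Zhou, and one cannot control $\int_A e^{-2m\varphi_j}$ on small sets $A$ at all (these integrals may be infinite). The way through is to dominate $e^{2m\varphi}$ by a \emph{finite} sum of squared moduli of holomorphic sections of $\cal{I}(2m\varphi)$ — this is exactly the finite form of Demailly's approximation — so that the stability theorem can be applied term by term and the interchange of $\lim_j$ with the sum is trivial; with Demailly's full Bergman--type series $\sum_l|\sigma_l|^{2}$ that interchange would itself be the hard point. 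A secondary point is that the estimate must be run at every scale $2m$ to recover all exponents $0<p<+\infty$.
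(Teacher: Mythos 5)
Your proposal is correct and takes essentially the same route as the paper's proof: both rest on Demailly's approximation theorem combined with the strong Noetherian/DPS finiteness argument to produce finitely many holomorphic functions in $\mathcal{I}(2m\varphi)_o$ with $\sum_k|g_k|^2\ge c\,e^{2m\varphi}$ on a compact neighborhood (note this lower bound is the Ohsawa--Takegoshi content of Demailly's theorem rather than a formal consequence of finite generation, exactly as you cite), followed by the Guan--Li--Zhou stability theorem applied to those functions and an upgrade from convergence in measure. The only differences are in the endgame: the paper bounds $\int_U e^{\varphi-\varphi_j}$ via H\"older and invokes Wade's theorem (handling general $p$ by rescaling $\varphi$, ``only the case $p=\tfrac12$''), while you obtain $L^1$-convergence at each scale $2m$ by pointwise domination plus Vitali and then recover all $0<p<+\infty$ by elementary inequalities --- both versions are sound.
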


\begin{proof}
	
    We first consider the case of $p=\frac{1}{2}$.
    The result is local, so we only prove it on a bounded Stein neighborhood $D$ of $o$ in $\mathbb{C}^n$.
    By Demailly's approximation theorem, there exists $C_1>0$  such that for any $m\ge1$ we have $\varphi -\frac{C_1}{m} \le \widetilde{\varphi}_m$ on a compact neighborhood $K$ of $o$, where
    $\widetilde{\varphi}_m = \frac{1}{2m} \log\sum_l |\sigma_l|^2$ and $(\sigma_l)$ is an orthonormal basis of $\mathcal{H}_D(m\varphi)$.
    In addition, there exist constants $C_2$ and $N$ such that $\sum_l |\sigma_l|^2 \le C_2 \sum_{l=1}^{N} |\sigma_l|^2$ on $K$ (see \cite{DPS01}).
    Let $U\subset K$ be an another neighborhood of $o$, which is to be determined later. Then by H\"older's inequality,
\begin{align*}
    \int_U e^{\varphi-\varphi_j}
    &\le \int_U e^{\frac{C_1}{m}+\widetilde{\varphi}_m-\varphi_j}\\
    &\le  C_2e^{\frac{C_1}{m}}\int_U (\sum_{l=1}^{N}|\sigma_l|^2)^{\frac{1}{2m}} e^{-\varphi_j}\\
    &\le C_2e^{\frac{C_1}{m}}{\rm Vol} (U)^{1-\frac{1}{2m}} \left(\sum_{l=1}^{N} \int_U  |\sigma_l|^2e^{-2m\varphi_j}\right)^{\frac{1}{2m}}.
    \end{align*}
    Since $\int_D  |\sigma_l|^2e^{-2m\varphi}=1$ and  $\varphi_j\le \varphi $,
    using Guan-Li-Zhou's stability theorem, we can obtain that $|\sigma_l|^2e^{-2m\varphi_j}$ converges to $ |\sigma_l|^2e^{-2m\varphi}$ in $L^1(U)$ as
    $j\rightarrow +\infty$ for $1\le l\le N$ by choosing small $U$.
    Hence $\{e^{\varphi-\varphi_j}\}$ is bounded in $L^1(U)$.

    Since $e^{\varphi-\varphi_j}$ converges to $1$ locally in measure, $e^{\varphi-\varphi_j}$ converges to $1$ in $L^{\frac{1}{2}}(U)$ by using the result in \cite{Wade74}:
    \textit{If $\{f_j\}$ is bounded in $L^q(U)$
    for some $0<q<+\infty$ and $f_j$ converges to $f$ in measure, then  $f_j$ converges to $f$ in $L^r(U)$ for any $0<r<q$}.

    For general $0<p<+\infty$, we apply the above special result to $(2p+2)\varphi$ and $(2p+2)\varphi_j$  and obtain that $e^{\varphi-\varphi_j}$ is bounded in $L^{p+1}(U)$.
    Hence $e^{\varphi-\varphi_j}$ converges to $\rm {1}$ in $L^p(U)$.

\end{proof}

	In fact, one can weaken the condition $\vp_j\le\vp$ to be $\varphi_j$ uniformly bounded from above and  $\cal I(p_k\varphi_j)\subset\cal I(p_k\varphi)$ for some sequence $p_k$ going to $+\infty$.  	
	The following example show that Lemma \ref{lem e^(phi-phi_j) bounded}  is not valid without this assumption.
	Consider $\phi=\log|z|^2,\phi_j=\log|z-w/j|^2  $ in $\C^2$, then $\phi_j$ converges to $\phi$ but $\cal{I}(m\phi_j)_0\not\subset \cal{I}(m\phi)_0$.
	Then
	$$e^{\phi-\phi_j}=\frac{|z|^2}{|z-w/j|^2}$$ is not integrable near $0$, and clearly does not converge to $1$ in $L^1_\loc$.

\subsection{Griffiths semi-positive singular Hermitian metrics}\label{ssect GSP}
Secondly, let us recall and give some properties of Griffiths semi-positive singular Hermitian metrics.
\begin{defi}[\cite{BP08},\cite{PT18},\cite{Rau15}]\label{def GSP}
    Let $E$ be a holomorphic vector bundle over a complex manifold $X$. Let $h$ be a singular Hermitian metric on $E$.
    \begin{enumerate}
      \item $h$ is said to be Griffiths semi-negative if $~\log|u|^2_h$ is plurisubharmonic for any local holomorphic section $u$ of $E$.
      \item $h$ is said to be Griffiths semi-positive if $h^*$ is Griffiths semi-negative on $E^*$.
      \item $h$ is said to be Griffiths positive if locally there exists a smooth strictly plurisubharmonic function $\psi$ such that $he^{\psi}$ is  Griffiths semi-positive.
    \end{enumerate}
\end{defi}

\begin{lem}[\cite{PT18}]\label{prop lowerbound}
    Let $h$ be a singular Hermitian metric on $E$ over $X$ and take  $U\Subset X$ such that $E|_U$ is trivial.

    $(1)$ If $h^*$ is locally bounded from above then there exists a constant $C_U> 0$ such that  for any $x\in U$
\begin{equation*}
    0<\frac{1}{C_U} \le \lambda_1 (x) \le \lambda_2 (x) \le \cdots \le \lambda_r (x) \le C_U^{r-1}\det h(x) \le +\infty,
    \end{equation*}
    where $\lambda_i(x)$ are eigenvalues of $h(x)$.

    $(2)$ If $h$ is a Griffiths semi-positive singular Hermitian metric, then there exists a family of smooth Griffiths positive metrics $h_v$ defined on a relatively compact open subset $U_v\subset U$ such that $h_v$
    increasingly converges to $h$ on any relatively compact subset of $U$, and $-\log \det h$ is plurisubharmonic.
\end{lem}
\begin{proof}
    $(1)$ Let $e$ be a local holomorphic frame of $E$. Then $0\le h^*_{jj}=|e^*_j|^2_{h^*}$ is locally bounded from above.
    Due to the polarization identity
$$  4h^*_{jk}=\sum_{\alpha=1}^{4} \im^\alpha |e^*_j + \im^\alpha e^*_k|^2_{h^*},    $$

	we know that $|h^*_{ik}|=|h^*_{kj}|$ is locally bounded too.
    Especially, $h^*$ is finite at each point on $X$. Let $0\le\lambda^*_1\le\lambda^*_2\le\cdots\le\lambda^*_r$ be the eigenvalues of $h^*$.
    Then $\lambda^*_r$ is locally bounded and $\lambda^*_r\le C_U$ on $U$ for some $C_U>0$. Hence on $U$
\begin{align*}
    0<\frac{1}{C_U} \le \lambda_1=\frac{1}{\lambda^*_r} \le \lambda_2=\frac{1}{\lambda^*_{r-1}} \le \cdots \le \lambda_r=\frac{1}{\lambda^*_1} \le C_U^{r-1}\det h \le +\infty .
\end{align*}
    $(2)$ We may assume that $U\subset \mathbb{C}^n$. Let $\rho\ge0$ be a smooth functions such that $\Supp\rho\subset \{|z|<1\} $ and $\int \rho =1$ and $\rho_\varepsilon(z)={\varepsilon^{-2n}}\rho(\varepsilon^{-1}z)$.
    For a fixed frame $e$ of $E$, we define metrics $h^*_\varepsilon$ as $\rho_\varepsilon *h^*$ on $E^*$ over $U_\varepsilon=\{z\in U | B(z,\varepsilon)\subset U \}$, that is, for any fixed  $z\in U_\varepsilon$  and any $\xi=\sum_j \xi_je^*_j(z)
    \in E^*_z$, $|\xi|^2_{h^*_\varepsilon}$ is defined to be
    $$(|\widetilde{\xi}|^2_{h^*} * \rho_\varepsilon) (z)=\int |\widetilde{\xi}(x)|^2_{h^*(x)} \rho_\varepsilon(z-x),$$
    where $\widetilde{\xi}=\sum_j\xi_je^*_j$ is a constant holomorphic section under the frame $e^*$.

    In fact, if $h=_e (h_{jk})$,  then
    $$|\xi|^2_{h^*_\varepsilon}
    =\int \sum_{jk} \xi_j\bar{\xi}_k h^*_{jk}(x)\rho_\varepsilon(z-x).$$
    Obviously $h^*_\varepsilon$ is decreasingly convergent to $h^*$ as $\varepsilon$ tends to $0^+$ since $|\widetilde{\xi}|^2_{h^*}$ is plurisubharmonic on $U_\varepsilon$.
    For any holomorphic section
    $u=\sum_ju_je^*_j$ on any open subset $V\subset U_\varepsilon$,
    $$|u|^2_{h^*_\varepsilon}(z)=\int \sum_{jk}u_j(z)\overline{u_k(z)}{h^*_{jk}(z-x)} \rho_\varepsilon(x) =\int |u^x(z-x)|^2_{h^*(z-x)} \rho_\varepsilon(x),$$
    where $u^x(z)=\sum_ju_j(z+x)e^*_j(z)$ is a  holomorphic section on $V-x:=\{z-x|z\in V\}$ for fixed $x$ with $|x|<\varepsilon$.

    Due to the plurisubharmonicity of $|u^x|^2_{h^*}$, it follows that
\begin{align*}
    \frac{1}{2\pi} \int_0^{2\pi} |u|^2_{h^*_\varepsilon}(z+e^{\mathrm{i}\theta}\zeta) d\theta
    &=\int \left(\frac{1}{2\pi} \int_0^{2\pi} |u^x|^2_{h^*}(z-x+e^{\mathrm{i}\theta}\zeta) d\theta\right) \rho_\varepsilon(x)\\
    &\ge \int |u^x|^2_{h^*}(z-x)\rho_\varepsilon(x)
    =|u|^2_{h^*_\varepsilon}(z).
    \end{align*}
    Therefore $|u|^2_{h^*_\varepsilon}$ is a smooth plurisubharmonic function.
    So $(h^*_\varepsilon+\varepsilon\mathrm{Id}_{E^*})e^{\varepsilon|z|^2}$ is smooth Griffiths negative and decreasingly converges to $h^*$ as $\varepsilon$ tends to $0$.
    Then $\hat h_\varepsilon:=(h^*_\varepsilon+\varepsilon\mathrm{Id}_{E^*})^*e^{-\varepsilon|z|^2}$ is smooth Griffiths positive and increasingly  converges to $h$.
    Moreover, $-\log \det \hat h_\varepsilon$ is plurisubharmonic and decreasingly converges to $-\log \det h$ as $\varepsilon\rightarrow 0^+$. We conclude that  $-\log \det h$ is plurisubharmonic.
\end{proof}
\begin{rem}\label{rem DNWZ app}
  Recently, Deng-Ning-Wang-Zhou \cite[Theorem 1.1]{DNWZ23} shows that Lemma \ref{prop lowerbound} is also valid for any Stein open subset $U$ of $X$.
\end{rem}

\begin{lem}\label{pro h/deth}
     Let $h$ be a Griffiths semi-positive singular Hermitian metric on $E$, then $(E\otimes\det E^*,h\otimes\det h^*)$ is Griffiths semi-negative.
\end{lem}

\begin{proof}
    By Lemma~\ref{prop lowerbound} locally there exists smooth Griffiths positive metric $h_v$ defined on $U_v$ increasingly converging to $h$ on $E$ since $h$ is Griffiths semi-positive.
    Then $h^*_v\otimes \det h_v$ is smooth Griffiths positive on $E^*\otimes \det E$ over $U_v$ ~(see Theorem\rm{ \Rmnum{7}}-(9.2)\cite{Demaillybook2012}).
    Hence $h_v\otimes \det h^*_v$ is smooth Griffiths negative on $E\cong E\otimes \det E^*$ over $U_v$ and decreasingly converges to $h\otimes \det h^*$.
    So for any local holomorphic section $F$ of $E\otimes\det E^*$, $|F|^2_{{h_v}\otimes{\det h_v^*}}$ is plurisubharmonic and decreasingly converges to  $|F|^2_{{h}\otimes{\det h^*}}$.
  Then $|F|^2_{{h}\otimes{\det h^*}}$ is also plurisubharmonic.
\end{proof}

\begin{lem}\label{lem h_j>h}
    Let $h$ and $h'$ be Griffiths semi-positive singular Hermitian metrics on a trivial holomorphic vector bundle E over U. If $h\ge h'$, then $\frac{h}{\det h}\le \frac{h'}{\det h'}$.
\end{lem}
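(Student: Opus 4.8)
The statement is local, so I would work over a ball $U\subset\mathbb{C}^n$ on which $E$ is trivial, fix a smooth frame $e$, and write $h=_e H$, $h'=_e H'$, so that $H(x)\ge H'(x)$ for every $x$. The whole content then rests on a pointwise linear-algebra fact together with the approximation machinery already developed: if $A\ge B$ are positive-definite Hermitian $r\times r$ matrices, then $\dfrac{A}{\det A}\le\dfrac{B}{\det B}$. To see this, multiply by $\det A\,\det B>0$ and conjugate by $B^{-1/2}$; with $C:=B^{-1/2}AB^{-1/2}$ one has $C\ge\mathrm{Id}$ and $\det A=\det B\cdot\det C$, so the claim reduces to $C\le(\det C)\,\mathrm{Id}$. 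But if $1\le\mu_1\le\cdots\le\mu_r$ are the eigenvalues of $C$, then $\mu_i\le\prod_j\mu_j=\det C$ for each $i$ (the omitted factors are $\ge 1$), hence $(\det C)\,\mathrm{Id}-C\ge 0$.

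Next I would produce \emph{compatible} smooth approximations of $h$ and $h'$. Since $h^*$ and $(h')^*$ are singular Griffiths semi-negative, the functions $|u|^2_{h^*}$ and $|u|^2_{(h')^*}$ are plurisubharmonic, hence locally bounded above, so Proposition~\ref{prop lowerbound}(1) applies and shows that $h$ and $h'$ are everywhere nondegenerate on $U$; in particular $h\ge h'$ forces $h^*\le(h')^*$. Following the proof of Proposition~\ref{prop lowerbound}(2), set $h^*_\varepsilon:=\rho_\varepsilon*h^*$ and $(h')^*_\varepsilon:=\rho_\varepsilon*(h')^*$ on $U_\varepsilon$. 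Convolution with the nonnegative kernel $\rho_\varepsilon$ preserves the Loewner order, so $h^*_\varepsilon\le(h')^*_\varepsilon$; adding $\varepsilon\,\mathrm{Id}$, scaling by $e^{\varepsilon|z|^2}$, and passing to duals are all order-monotone operations, so $h_\varepsilon:=(h^*_\varepsilon+\varepsilon\,\mathrm{Id})^*e^{-\varepsilon|z|^2}\ge((h')^*_\varepsilon+\varepsilon\,\mathrm{Id})^*e^{-\varepsilon|z|^2}=:h'_\varepsilon$, with $h_\varepsilon,h'_\varepsilon$ smooth Griffiths positive and increasingly convergent to $h$ and $h'$ respectively as $\varepsilon\to 0^+$.

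Finally, applying the linear-algebra fact pointwise to $h_\varepsilon\ge h'_\varepsilon$ gives $\dfrac{h_\varepsilon}{\det h_\varepsilon}\le\dfrac{h'_\varepsilon}{\det h'_\varepsilon}$ on $U_\varepsilon$, i.e. $|F|^2_{h_\varepsilon/\det h_\varepsilon}\le|F|^2_{h'_\varepsilon/\det h'_\varepsilon}$ for every local holomorphic section $F$. Letting $\varepsilon\to 0^+$ and invoking the proof of Proposition~\ref{pro h/deth}, the left side decreases to $|F|^2_{h/\det h}$ on $\{\det h<+\infty\}$ and the right side to $|F|^2_{h'/\det h'}$ on $\{\det h'<+\infty\}$. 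Since $-\log\det h$ and $-\log\det h'$ are plurisubharmonic and not identically $-\infty$, the locus where either determinant equals $+\infty$ is pluripolar, hence of measure zero, so $|F|^2_{h/\det h}\le|F|^2_{h'/\det h'}$ almost everywhere — and then everywhere, both sides being plurisubharmonic. This is precisely $\dfrac{h}{\det h}\le\dfrac{h'}{\det h'}$.

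The one delicate point, which I expect to be the crux, is the \emph{compatibility} of the approximants in the second paragraph: Proposition~\ref{prop lowerbound}(2) yields only the \emph{existence} of smooth Griffiths positive approximations, and two such families chosen independently need not preserve the inequality $h\ge h'$. The remedy is to avoid using that proposition as a black box and instead reuse its explicit convolution construction, observing that each operation involved — $\rho_\varepsilon*(\cdot)$, adding $\varepsilon\,\mathrm{Id}$, scaling by $e^{\pm\varepsilon|z|^2}$, and dualization — is monotone for the Loewner order, so a single parameter $\varepsilon$ produces comparable $h_\varepsilon\ge h'_\varepsilon$. Everything else is the elementary eigenvalue estimate of the first paragraph and the limiting statement already contained in Proposition~\ref{pro h/deth}.
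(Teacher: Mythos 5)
Your proof is correct and takes essentially the same route as the paper: the paper likewise treats the smooth case as the pointwise linear-algebra base case and then, exactly as in your ``delicate point,'' reuses the convolution construction from the proof of Proposition~\ref{prop lowerbound} to produce compatible smooth approximants $h_v\ge h_v'$ increasing to $h$ and $h'$, passing to the limit as in Proposition~\ref{pro h/deth}. Your explicit eigenvalue argument and the step-by-step Loewner-monotonicity check merely fill in details the paper leaves implicit.
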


\begin{proof} It is obviously true for smooth metrics $h$ and $h'$. In general,
    from the proof of Lemma~\ref{prop lowerbound}, we can construct $h_v\nearrow h$ and $h_v'\nearrow h'$ on $U_v$ such that $h_v\ge h_v'$.
    Then  $\frac{h_v}{\det h_v}\le \frac{h_v'}{\det h_v'}$.
    Let $v\rightarrow +\infty$, we obtain that $\frac{h}{\det h}\le \frac{h'}{\det h'}$.
\end{proof}

\begin{lem}\label{lem int of |f|^2e^phi dominates |f|}
    Let $D$ be an open set.
\begin{enumerate}
        \item[\rm {(1)}]
        Assume $f_j\in \mathcal{O}(D), \phi \in \Psh(D)$ and $\phi \not\equiv -\infty$. If for any relatively compact subset $U$ of $D$ there exists $C_U$ independent of j such that
        $\int_{U} |f_j|^2e^\phi \le C_U < +\infty$  for every $j$, then there exists a subsequence of \{$f_j$\} compactly converging to f $\in \mathcal{O}(D)$.
        \item [\rm {(2)}]
        Let $h$ be a Griffiths semi-positive singular Hermitian metric on a trivial holomorphic vector bundle $E$ over $D$ and $F_j\in \mathcal{O}(E)(D)$. If for any relatively compact subset $U$ of $D$ there exists
        $C_U$ independent of $j$ such that $\int_{U} |F_j|^2_{\frac{h}{\det h}} \le C_U < +\infty$  $ \forall j$, then there exists a subsequence of \{$F_j$\} compactly converging to  $F_0\in \mathcal{O}(E)(D)$.
        \item[\rm {(3)}]
        Let $D,D_j$ be open sets and $E$ be a trivial holomorphic vector bundle with a  Griffiths semi-positive singular Hermitian metric $h$ over $D$. Assume $D_j\subset D_{j+1}$ and $\bigcup_j D_j=D$.
        If $F_j\in \mathcal{O}(E)(D_j) $ satisfies that $\int_{D_j} |F_j|^2_{\frac{h}{\det h}}\le C<+\infty$ for each $j$, then there exists a subsequence of \{$F_j$\} compactly converging to
        $F_0\in \mathcal{O}(E)(D)$ and $\int_{D} |F_0|^2_{\frac{h}{\det h}}\le C$.
    \end{enumerate}
\end{lem}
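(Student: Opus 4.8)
The plan is to deduce all three statements from Montel's theorem, so the crux in each case is to show that the relevant family of holomorphic functions (or sections) is locally uniformly bounded. The two tools will be the sub-mean value inequality for plurisubharmonic functions and the observation that the potentially non-integrable weight $\det h$ is of the form $e^{-\psi}$ with $\psi$ plurisubharmonic.

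For (1) I would first note that $u_j:=|f_j|^2e^{\varphi}=\exp\!\big(2\log|f_j|+\varphi\big)$ is plurisubharmonic on $D$, being the exponential of a sum of two plurisubharmonic functions. The hypothesis $\int_U|f_j|^2e^{\varphi}\le C_U$ together with the sub-mean value inequality then yields, on any ball $B(z_0,r)$ with $\overline{B(z_0,2r)}\subset D$, a bound $u_j\le A$ with $A$ independent of $j$. Rewriting this as $2\log|f_j|+\varphi\le\log A$ and passing to positive parts gives $(\log|f_j|)^{+}\le\tfrac12\big((\log A)^{+}+\varphi^{-}\big)$ on $B(z_0,r)$, whose right-hand side lies in $L^1_{\mathrm{loc}}$ since $\varphi$ is plurisubharmonic and not identically $-\infty$. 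A second application of the sub-mean value inequality, now to the plurisubharmonic function $\log|f_j|$ over slightly smaller balls, converts this into a pointwise upper bound for $\log|f_j|$ that is uniform in $j$. Hence $\{f_j\}$ is locally uniformly bounded and Montel's theorem produces a subsequence converging uniformly on compacts to some $f\in\mathcal{O}(D)$.

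For (2), Proposition~\ref{pro h/deth} says $g:=h/\det h$ is singular Griffiths semi-negative, so $v_j:=|F_j|^2_{g}$ is plurisubharmonic, and as above the hypothesis forces $v_j\le A$ locally with $A$ independent of $j$. To descend from $v_j$ to the components $F_j^1,\dots,F_j^r$ of $F_j$ in a trivialization I would use that the least eigenvalue $\lambda_1$ of $h$ is locally bounded below: indeed $h^{*}$, being Griffiths semi-negative (as $h$ is Griffiths semi-positive), is locally bounded above, so Proposition~\ref{prop lowerbound}(1) applies; meanwhile $\psi:=-\log\det h$ is plurisubharmonic by Proposition~\ref{prop lowerbound}(2). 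Since $v_j=F_j^{*}HF_j/\det H\ge(\lambda_1/\det H)\sum_k|F_j^k|^2$, we get $\sum_k|F_j^k|^2\le A\det H/\lambda_1\le \mathrm{const}\cdot e^{-\psi}$ locally. This is again a pointwise bound of the shape ``$|F_j^k|^2\le e^{-\psi}$ with $\psi$ plurisubharmonic'', so the two-step argument of (1) applies verbatim to each component, giving local uniform boundedness and then, via Montel's theorem and a diagonal extraction, a subsequence of $\{F_j\}$ converging uniformly on compacts to some $F_0\in\mathcal{O}(E)(D)$. Part (3) uses the same estimates: near each point of $D$ one restricts to indices $j$ large enough that the relevant ball lies in $D_j$, obtains local uniform boundedness on all of $D$, and extracts a convergent subsequence $F_{j_k}\to F_0\in\mathcal{O}(E)(D)$ as before; the bound $\int_D|F_0|^2_{h/\det h}\le C$ then follows by Fatou's lemma applied on an exhaustion of $D$ by relatively compact open sets $U$ (with $U\Subset D_{j_k}$ for $k$ large), using that $F_{j_k}\to F_0$ pointwise and that $h/\det h$ is a genuine finite metric almost everywhere.

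The step I expect to be the main obstacle is precisely this passage from $v_j\le A$ to a uniform bound on $F_j$ itself: the naive estimate controls $|F_j|^2$ only by a multiple of $\det h$, which in general is neither locally bounded nor locally integrable, so $L^2$-Montel is unavailable. The device that saves the argument — and the technical core of the lemma — is to use the plurisubharmonicity of $-\log\det h$ (Proposition~\ref{prop lowerbound}(2)) and to estimate $\log|F_j^k|$ rather than $|F_j^k|$: the positive part of $\log|F_j^k|$ then inherits local integrability from $\psi^{-}$ (respectively $\varphi^{-}$ in part (1)), and the sub-mean value inequality upgrades this to the pointwise bound needed for normality.
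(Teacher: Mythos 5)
Your argument is correct, but it reaches the crucial normality estimate in part (1) by a genuinely different mechanism than the paper. The paper's proof fixes $x$, picks $p>1$ so small that $(p-1)$ times the Lelong number of $\varphi$ at $x$ is less than $2$, invokes Skoda's lemma (Lemma~\ref{lem Skoda}) to get local integrability of $e^{-(p-1)\varphi}$, and then uses H\"older's inequality to convert the hypothesis into a uniform $L^{2/q}$ bound on $f_j$ near $x$; the mean value inequality applied to the plurisubharmonic function $|f_j|^{2/q}$ then gives uniform local sup bounds and Montel finishes. You instead exploit that $|f_j|^2e^{\varphi}=\exp(2\log|f_j|+\varphi)$ is itself plurisubharmonic, so the integral hypothesis upgrades directly to the pointwise bound $2\log|f_j|\le\log A-\varphi$ on slightly smaller balls; since $\varphi\in L^1_{\mathrm{loc}}$ (it is plurisubharmonic and not identically $-\infty$ on any component), this yields a $j$-uniform $L^1$ bound on $(\log|f_j|)^{+}$, and a second sub-mean value inequality, now for $\log|f_j|$, gives the uniform local sup bound. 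This route avoids Skoda's theorem and Lelong numbers altogether and is more elementary, at the negligible cost of the two-step mean-value argument; the paper's route produces genuine uniform $L^{2/q}$ bounds as a by-product, but these are not needed. For part (2) the two proofs perform the same reduction to the scalar case via Proposition~\ref{prop lowerbound}: the local lower bound $h\ge C_U^{-1}\mathrm{Id}_E$ (available because $h^*$ is Griffiths semi-negative, hence locally bounded above) and the plurisubharmonicity of $-\log\det h$; your additional appeal to Proposition~\ref{pro h/deth} to make $|F_j|^2_{h/\det h}$ plurisubharmonic is harmless but dispensable, since one can simply estimate $\int_U|F_j^k|^2e^{-\log\det h}\le C_U\int_U|F_j|^2_{\frac{h}{\det h}}$ and quote (1) componentwise, which is exactly what the paper does. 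For part (3) both proofs use a diagonal extraction over the exhaustion $\{D_j\}$; your explicit Fatou plus monotone-convergence step for $\int_D|F_0|^2_{\frac{h}{\det h}}\le C$ supplies a detail the paper leaves implicit.
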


\begin{proof}

\begin{enumerate}[(1)]
        \setlength{\itemsep}{0pt}
        \item For any compact $K\subset D$, choose a relatively compact open subset $U_K$ of $D$ satisfying $K\subset U_K\subset D$, for any $x \in K$, there exists $p=p_x\ > 1$ such that
        $$\nu((p-1)\phi,x)=(p-1)\nu(\phi,x)< 2.$$
        Then Lemma~\ref{lem Skoda} implies that there is a neighborhood $\ U_x\subset U_K$ of $x$  such that $\int_{U_x} e^{-(p-1)\phi}< +\infty$. So
        by H\"older's inequality, we have
\begin{align*}
        \int_{U_x} |f_j|^{\frac{2}{q}}
        &=\int_{U_x} |f_j|^{\frac{2}{q}}e^{\frac{1}{q}\phi}e^{-\frac{1}{q}\phi}\\
        &\le \left(\int_{U_x}|f_j|^2e^{\phi}\right)^{\frac{1}{q}}\left(\int_{U_x}e^{-(p-1)\phi}\right)^{\frac{1}{p}}\\
        &\le \left(C_{{U_K}}\right)^{\frac{1}{q}}\left(\int_{U_x}e^{-(p-1)\phi}\right)^{\frac{1}{p}},
        \end{align*}
        where $\frac{1}{p}+\frac{1}{q} = 1$. For any relatively compact open subset $x \in \widetilde{U}_x \subset U_K$, we have $\max_{\widetilde{U}_x} |f_j|\le {C_x} $ by mean value inequality, where
        $C_x$ is independent of $j$ . Hence  $\max_K|f_j|\le \widetilde{C}_K $ for all $j$ by the finite covering theorem. So there exists a subsequence of \{$f_j$\} compactly converging to  $f\in \mathcal{O}(D)$
        by Montel's theorem.
        \item
        Note that $h$ has a lower bound locally and $-\log \det h$ is plurisubharmonic by Lemma~\ref{prop lowerbound}, then $(1)$ implies $(2)$.
        \item
        Since $\{D_j\}_j$ is an exhaustion of $D$, i.e., $D_j$ contains any given compact subset $K$ of $D$ for large $j$, using $(2)$ and the diagonal argument, there is a subsequence of $\{F_j\}$
        compactly converging to $F_0\in\mathcal{O}(E)(D)$ on $D$.
    \end{enumerate}
\end{proof}

In the end, we give a remark on the definition of complex singularity exponents (Definition \ref{def C_F}).

\begin{rem}\label{rem C monotone}

\begin{enumerate}
\item [\rm (1)]
	It is obvious that $ C_{F,\beta}(U')\le \int_{U'} |{F}|^2_{\frac{h}{\det h}} $ and ${C}_{F,\beta}(U'')\le {C}_{F,\beta}(U')$ for any $o \in U''\subset U'$.
\item [\rm (2)]
    It  follows from  {\rm{Lemma \ref{prop lowerbound}}} that  $|F|^2_h$ is locally dominated by a constant multiple of $|F|^2e^{-\varphi}$, where $\varphi=-\log\det h\in \Psh(U)$. Then by {\rm Lemma \ref{lem Skoda}} one can  show that  $c^F_o(h)=+\infty$ if  and only if $\nu(\varphi,o)=0$ for $F\not\equiv 0$.
\item [\rm (3)]
    By $\mathrm{Lemma}~\ref{lem int of |f|^2e^phi dominates |f|}$ and the closeness of the sections of coherent analytic sheaves under the  topology of compact convergence, one can prove that the set
    $$\mathcal{H}_\beta(U,o):=
    \left\{\widetilde{F}\in \mathcal{O}(E)(U) ~\Big| ~\int_U |\widetilde{F}|^2_{\frac{h}{\det h}} < +\infty\ \ \mathrm{and}\ \ (\widetilde{F}-F, o)\in \mathcal{E} (h(\det h)^\beta )_o    \right\}$$
     is a nonempty closed convex set  if $\int_U |F|^2_{\frac{h}{\det h}}<+\infty$.
\end{enumerate}
\end{rem}

\subsection{Lempert's question and Nakano semi-positive singular Hermitian metrics}\label{ssect NSP}

 Thirdly, in \cite{Lempert17}, Lempert asked on holomorphic Hilbert bundles whether a $C^2$-smooth Hermitian metric whose curvature dominates 0 is Nakano semi-positive in the usual sense.
    It follows essentially from Deng-Ning-Wang-Zhou's Theorem~\cite{DNWZ20} that one can answer Lempert's question affirmatively. In fact, by definition, there exists a sequence $\{h_j\}$ of $C^2$-smooth
    Nakano semi-positive Hermitian metrics which increasingly converges to $h$. It is well-known that $h_j$ is $L^2$ optimal, and so is $h$ by  the following {\rm Proposition~\ref{prop he^}}.
    Then $h$ is Nakano semi-positive in the usual sense since $h$ is $C^2$-smooth. In other words, {\rm Proposition~\ref{prop he^}} can be regard as an affirmative answer to Lempert's question in the setting of singular Hermitian metrics.

\begin{pro}\label{prop he^}
    Let $h$ be a singular Hermitian metric on a holomorphic vector  (resp. Hilbert) bundle $E$.
    Let $\{h_j\}$ be a sequence of  Nakano semi-positive singular Hermitian metrics on $E$.
    Assume that $\{h_j\}$ increasingly converges to $h$.
    Then $h$ is also Nakano semi-positive.
    Especially, $he^{-\psi}$ is Nakano semi-positive for any plurisubharmonic function $\psi$.
\end{pro}

\begin{proof}
Since the limit of a decreasing sequence of plurisubharmonic functions is also plurisubharmonic,
	one can check that $h$ is a Griffiths semi-positive singular Hermitian metric by definition.
	Now for any Stein open subset $U$ such that $E|_U$ is trivial, any K\"ahler form $\omega_U$ on $U$ and any $\psi \in {\rm Spsh}(U)\cap C^\infty(U)$ such that for any $\overline{\partial}$-closed
    $f\in C^\infty_{(n,1)} (U,E|_U)$ and
\begin{align*}
    \int_{U}^{} \langle B^{-1}_{\omega_U,\psi} f,f \rangle_{\omega_U,h} e^{-\psi} dV_{\omega_U} < +\infty,
    \end{align*}
    we have
\begin{align*}\label{ineq 4}
    \notag \int_{U}^{} \langle B^{-1}_{\omega_U,\psi} f,f \rangle_{\omega_U,h_j} e^{-\psi} dV_{\omega_U}
    \le
    \int_{U}^{} \langle B^{-1}_{\omega_U,\psi} f,f \rangle_{\omega_U,h} e^{-\psi} dV_{\omega_U}
    < +\infty.
    \end{align*}
    Then there exists $u_j\in  L^2_{(n,0)} (U,\omega_U,E|_{U},h_je^{-\psi})$ such that
     $\overline{\partial}u_j = f$ and
\begin{equation*}
    \int_{U}^{} |u_j|^2_{\omega_U,h_j} e^{-\psi} dV_{\omega_U}
    \le \int_{U} \langle B^{-1}_{\omega_U,\psi} f,f \rangle_{\omega_U,h_j} e^{-\psi} dV_{\omega_U}
    \le \int_{U}^{} \langle B^{-1}_{\omega_U,\psi} f,f \rangle_{\omega_U,h} e^{-\psi} dV_{\omega_U}.
    \end{equation*}

    Since $\{h_j\}$ is bounded below by a continuous Hermitian metric, we get that $\{u_j\}$ is bounded in $L^2_\loc(U)$, and especially $\{u_j-u_1\}$ is bounded in $L^2_\loc(U)$.
    Then there exists a subsequence of $\{u_j-u_1\}$ compactly converging to $u-u_1$ on $U$ since $\dpa (u_j-u_1)=0$ .
    So we obtain $\dpa u=f$ and by Fatou's lemma
\begin{equation*}
    \int_{U}^{} |u|^2_{\omega_U,h} e^{-\psi} dV_{\omega_U}
    \le \liminf_{j\to +\infty}\int_{U}^{} |u_j|^2_{\omega_U,h_j} e^{-\psi} dV_{\omega_U}
    \le \int_{U}^{} \langle B^{-1}_{\omega_U,\psi} f,f \rangle_{\omega_U,h} e^{-\psi} dV_{\omega_U}.
    \end{equation*}
\end{proof}

\begin{rem}
In \cite{In20}, Inayama proved that a singular Hermitian metric $h$ which is a limit of a increasing sequence of Nakano semi-positive  smooth Hermitian metrics $\{h_j\}$ is  Nakano semi-positive.
\end{rem}

Now let us recall the following facts:
\begin{enumerate}
  \item Any analytic set in a Stein manifold is globally defined;
  \item Stein manifold is still Stein after removing a hypersurface;
  \item For any holomorphic vector bundle on a Stein manifold, there exists an analytic subset such that the vector bundle is trivial outside the analytic subset.
\end{enumerate}

 Then by \cite[Lemma~{\rm{\Rmnum{8}}-(7.3)}]{Demaillybook2012}, it is easy to show that
\begin{lem}\label{thm NSP outside analytic set}
    Let $X$ be a complex manifold, $A\subset X$ be a proper analytic set. Let $E$ be a holomorphic vector bundle over $X$ with a  singular Hermitian metric $h$. Assume $h$ is Nakano semi-positive
    on $X\setminus A$ and Griffiths semi-positive on $X$. Then $h$ is  Nakano semi-positive on $X$.
\end{lem}

 In \cite{In20}, Inayama proved the Lemma \ref{thm NSP outside analytic set}  with an additional assumption that $X\setminus A$ is Stein.

In addition,  by \cite[Lemma~{\rm{\Rmnum{8}}-(7.3)}]{Demaillybook2012}, the condition that $E|_U$ is trivial in the definition of Nakano semi-positivity is unnecessary.  Then we can easily obtain the following result by using Remark\ref{rem DNWZ app}, \cite[Theorem \rm{\Rmnum{7}}-(9.2)]{Demaillybook2012}, \cite[Theorem 7.2 ]{LSY13} and Proposition \ref{prop he^}.

\begin{pro}\label{pro hdeth}
  Let $h$ be a Griffiths (semi-)positive singular Hermitian metrics on holomorphic vector bundle $E$ of rank $r$ over a complex manifold $X$ of dimension $n$. Then
  \begin{enumerate}
    \item[(1)] for any $s\ge \min\{n,r\}$, $(E^*\otimes\det E^s,h^*\otimes \det h^s)$ is   Nakano (semi-)positive.
    \item[(2)]  for any $m\ge0$, $({\rm Sym}^mE\otimes \det E,{\rm Sym}^mh\otimes\det h)$ is Nakano (semi-)positive.
  \end{enumerate}
 \end{pro}

 When $m=1$ and $h$ is smooth,  Nakano (semi-)positivity of $(E\otimes \det E,h\otimes\det h)$ is obtained by Demailly-Skoda \cite{Demailly1980}.  In \cite{In20}, Inayama  showed that  $(E\otimes \det E,h\otimes\det h)$ is Nakano (semi-)positive if $(E,h)$ is Griffiths (semi-)positive in the singular setting.

Let $\{\psi_j\}\subset \Psh(D)$ such that  $\sup_j \psi_{j}<+\infty$.
It is well-known that the upper semi-continuous regularization $(\sup_j \psi_{j})^\divideontimes$  is plurisubharmonic and equals to $\sup_j |u|^2_{h^*_j}$ almost everywhere. Similarly, we obtain the closeness of Nakano semi-positivity from above (Proposition \ref{prop closeness of NSP from above}).

\begin{pro}\label{prop closeness of NSP from above}
    Let $h$ be a singular
    Hermitian metric.
    Let $\{h_j\}$ be a sequence of  Griffiths (resp. Nakano) semi-positive singular Hermitian metrics.
    Assume that $h$ is bounded below by a continuous Hermitian metric and $h\le h_j\le h_1$ converges to $h$.
    Then $h$ has a unique Griffiths (resp. Nakano) semi-positive modification, i.e. $h$ equals to a Griffiths (resp. Nakano) semi-positive singular Hermitian metric almost everywhere.
\end{pro}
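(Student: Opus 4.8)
The plan is to build a singular Nakano semi-positive metric $\widetilde h$ equal to $h$ almost everywhere, as a limit of the $h_j$. The statement being local and invariant under modification on null sets, I would first pass to a Stein coordinate $D$ trivializing $E$, fix a continuous hermitian metric $h_0\le h$, and (passing to a subsequence, which is harmless) assume $h_j\to h$ a.e.\ on $D$. Since each $h_j$ is singular Griffiths semi-positive, each dual $h_j^{*}$ is singular Griffiths semi-negative on $E^{*}$, and $h_0\le h\le h_j$ gives $h_j^{*}\le h^{*}\le h_0^{*}$, so $\{h_j^{*}\}$ is locally uniformly bounded above. Hence for every local holomorphic section $u$ of $E^{*}$ the functions $|u|^2_{h_j^{*}}$ are plurisubharmonic, locally uniformly bounded above, and converge a.e.\ to $|u|^2_{h^{*}}$, so by Proposition~\ref{prop Hartos lemma} a subsequence of them converges in $L^1_{\mathrm{loc}}$ to a plurisubharmonic function; thus $|u|^2_{h^{*}}$ coincides a.e.\ with a plurisubharmonic function. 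Together with the usual way of realizing singular Griffiths semi-negative metrics as limits (cf.\ \cite{Rau15}, \cite{PT18}, and Propositions~\ref{prop lowerbound}--\ref{pro h/deth}), this yields a singular Griffiths semi-negative metric $g^{*}$ on $E^{*}$ with $g^{*}=h^{*}$ a.e.\ and $g^{*}\le h_0^{*}$; I set $\widetilde h:=(g^{*})^{*}$, a singular Griffiths semi-positive metric with $\widetilde h\ge h_0$ and $\widetilde h=h$ a.e.

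It then remains to verify that $\widetilde h$ satisfies the optimal $L^2$-estimate condition; since this condition involves only integrals, it is insensitive to null-set modifications, so I may verify it for $h$ directly. Fix a Stein $U\subset D$, a K\"ahler form $\omega_U$, $\psi\in\mathrm{Spsh}(U)\cap C^\infty(U)$, and write $B=B_{\omega_U,\psi}$; $B$ is positive on $(n,1)$-forms and $\langle B^{-1}f,f\rangle_{h}$ is nondecreasing in $h$. I first treat a $\overline{\partial}$-closed $f$ enjoying the extra integrability $\int_U|f|^2_{h_1}e^{-\psi}<+\infty$ and $\int_U\langle B^{-1}f,f\rangle_{h_1}e^{-\psi}<+\infty$. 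As $h_j\le h_1$, both bounds persist with $h_j$, so the optimal $L^2$-estimate for the Nakano semi-positive $h_j$ furnishes $u_j$ with $\overline{\partial}u_j=f$ and
\[
\int_U|u_j|^2_{h_j}e^{-\psi}\ \le\ \int_U\langle B^{-1}f,f\rangle_{h_j}e^{-\psi}\ \le\ \int_U\langle B^{-1}f,f\rangle_{h_1}e^{-\psi}.
\]
Since $h_j\ge h_0$, $\{u_j\}$ is bounded in $L^2_{\mathrm{loc}}(U)$; since $\overline{\partial}(u_j-u_1)=0$, a subsequence of $\{u_j-u_1\}$ converges locally uniformly to a holomorphic $(n,0)$-form, whence $u_j\to u$ locally uniformly with $\overline{\partial}u=f$. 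Then $|u_j|^2_{h_j}\to|u|^2_{\widetilde h}$ a.e., and combining Fatou's lemma with dominated convergence on the right (the integrands are $\le\langle B^{-1}f,f\rangle_{h_1}e^{-\psi}\in L^1$ and tend a.e.\ to $\langle B^{-1}f,f\rangle_{\widetilde h}e^{-\psi}$) gives $\int_U|u|^2_{\widetilde h}e^{-\psi}\le\int_U\langle B^{-1}f,f\rangle_{\widetilde h}e^{-\psi}$, the required estimate for such $f$.

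For an arbitrary $\overline{\partial}$-closed $f\in L^2_{(n,1)}(U,\omega_U,E,\widetilde he^{-\psi})$ with $\int_U\langle B^{-1}f,f\rangle_{\widetilde h}e^{-\psi}<+\infty$, I would pass to the general case by density: on the subspace of $f$ treated above the minimal-solution operator $f\mapsto u$ in $L^2_{(n,0)}(U,\omega_U,E,\widetilde he^{-\psi})$ is linear of operator norm $\le1$ for the Hilbert norm $\bigl(\int_U\langle B^{-1}\cdot,\cdot\rangle_{\widetilde h}e^{-\psi}\bigr)^{1/2}$, which dominates an $L^2_{\mathrm{loc}}$-norm because $\widetilde h\ge h_0$ and $B$ is locally bounded; hence it extends continuously to the closure of that subspace, and, $\overline{\partial}$ being a closed operator, the limit still solves $\overline{\partial}u=f$ with the optimal bound. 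Granting that this subspace is dense among all admissible $f$, we conclude that $\widetilde h$ satisfies the optimal $L^2$-estimate condition, so by Definition~\ref{def NSP} $\widetilde h$ is a singular Nakano semi-positive metric with $\widetilde h=h$ a.e. The step I expect to be the main obstacle is precisely this density claim: writing $f=\overline{\partial}v$ on the Stein set $U$ (the $\overline{\partial}$-closed $(n,1)$-forms being exact there) and approximating by $\overline{\partial}(\chi_m v)$ for cut-off functions $\chi_m$ adapted to the pluripolar locus $\{-\log\det h_1=-\infty\}$, one has to control $\int_U|\overline{\partial}\chi_m|^2\,|v|^2_{\widetilde h}e^{-\psi}$ through a judicious choice of $\chi_m$, in the spirit of the standard $L^2$ cut-off estimates.
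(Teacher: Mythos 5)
Your first two steps are sound and essentially parallel the paper's: the Griffiths semi-positive modification is obtained there too by dualizing (the paper makes your gloss precise by polarizing the frame entries of $h^*$, replacing $\sup_j|u|^2_{h_j^*}$ by its upper semicontinuous regularization, and checking plurisubharmonicity of $|u|^2_{\widehat{h}^*}$ by a convolution argument), and your treatment of the ``good'' class of $f$ --- those with $\int_U\langle B^{-1}f,f\rangle_{h_1}e^{-\psi}<+\infty$ --- is the same limiting argument as in Proposition~\ref{prop he^} (Fatou on the left, domination by the $h_1$-integrand on the right). The genuine gap is exactly the step you flag yourself: the density claim. The whole difficulty of the proposition is that finiteness of the right-hand side is assumed only for the smallest metric $h$, whereas the solvability you can invoke is for the larger metrics $h_j$; the only comparison in the useful direction is $h_j\le\frac{h}{\det h}\det h_j\le h\,e^{\varphi-\varphi_1}$ (via Lemma~\ref{lem h_j>h} and $\varphi_j\ge\varphi_1$), so an approximant belongs to your good class only if it is square-integrable against the weight $e^{-\varphi_1}=\det h_1$. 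Off the analytic set $A=\mathrm{Supp}\,\mathcal{O}_U/\mathcal{I}(\varphi_1)$ this weight is merely locally integrable, not locally bounded, so even pieces of $f$ supported away from $A$ need not be ``good''; truncating instead on sublevel sets $\{\varphi_1>-m\}$ runs into the fact that these sets are not open, and in any case you never show that the error terms $\overline{\partial}\chi_m\wedge v$ tend to $0$ in the $B^{-1}$-weighted norm. As written, the reduction of a general admissible $f$ to the good class does not go through, and this is precisely where the main idea is needed.

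The paper avoids approximating $f$ altogether. It takes holomorphic functions with $A\subset\{g_1=\cdots=g_l=0\}$, exhausts the Stein open set $\widetilde{U}=U\setminus\{g_1=0\}$ by Stein opens $U_k\Subset U_{k+1}$, and observes that on each $U_k$ the right-hand side for $h_j$ is finite, because $e^{-\varphi_1}$ is integrable near $\overline{U_k}$ and $\langle B^{-1}f,f\rangle_{h_j}\le\langle B^{-1}f,f\rangle_{\frac{h}{\det h}}e^{-\varphi_j}\le\langle B^{-1}f,f\rangle_{\frac{h}{\det h}}e^{-\varphi_1}$ there. It then solves $\overline{\partial}u_{jk}=f$ on $U_k$, passes to the limit in $j$ (your compactness argument), then in $k$ by monotone convergence, obtaining a solution on $\widetilde{U}$ with the optimal bound, and finally extends it across the analytic set $\{g_1=0\}$ by the $L^2$ removable-singularity lemma (Lemma VIII-(7.3) of \cite{Demaillybook2012}), which gives $\overline{\partial}u=f$ on all of $U$ with the required estimate. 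Some such device --- shrinking the domain away from the non-integrability locus of $\det h_1$ and removing the singularity at the end, rather than a density argument in $f$ --- is what your proposal is missing.
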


\begin{proof}
	
	Firstly, we show that $h$ has a Griffiths semi-positive modification.
	The key point is to show that such modification metric is globally well-defined.
	
    Let $e$ be a local holomorphic frame of $E$.
    Via the polarization identity,
    we have that for any $1\le j,k\le r$
    $$
	    4h^*_{jk}=\sum_{\alpha=1}^{4} \im^\alpha |e^*_j+\im^\alpha e^*_k|^2_{h^*}
    .$$
    Since $h_j\ge h$ converges to $h$, we get that $|u|^2_{h^*}=\sup_j |u|^2_{h^*_j}$ for any local holomorphic section $u$ of $E^*$.
    Moreover, it is well-known that the upper semi-continuous regularization $(\sup_j |u|^2_{h^*_j})^\divideontimes$ of $\sup_j |u|^2_{h^*_j}$ is plurisubharmonic and equals to $\sup_j |u|^2_{h^*_j}$ almost everywhere.
    Especially, we locally define the new metric $\widehat{h}$ on $E$ by setting
    $$
    4{\widehat{h}}^*_{jk}
    :=\sum_{\alpha=1}^{4} \im^\alpha \left(\sup_l |e^*_j+\im^\alpha e^*_k|^2_{h^*_l}\right)^\divideontimes
    .$$
    Of course, $\widehat{h}=h$ almost everywhere.

    We claim that $\widehat{h}$ is Griffiths semi-positive.
    Take a smooth function $0\le \rho\le 1$ with compact support satisfying $\int_X \rho=1$ and set $\rho_\varepsilon =\varepsilon^{-2n}\rho(\varepsilon^{-1}z)$.
    For any local holomorphic section $u=\sum u_j e^*_j$ of $E^*$,
    $$
    |u|^2_{\widehat{h}^*}*\rho_\varepsilon
    =|u|^2_{{h}^*}*\rho_\varepsilon
    =(\sup_l |u|^2_{h^*_l})*\rho_\varepsilon
    =(\sup_l |u|^2_{h^*_l})^\divideontimes*\rho_\varepsilon
    $$
    is plurisubharmonic and increasing with respect to $\varepsilon>0$.

    On the other hand,
\begin{align*}
    4\widehat{h}^*_{jk}*\rho_\varepsilon
    &=\sum_{\alpha=1}^{4} \im^\alpha
    \left(\sup_l|e^*_j+ \im^\alpha e^*_k|^2_{h^*_l}\right)^\divideontimes*\rho_\varepsilon    \\
 &\rightarrow
    \sum_{\alpha=1}^{4} \im^\alpha
    \left(\sup_l|e^*_j+ \im^\alpha e^*_k|^2_{h^*_l}\right)^\divideontimes\\
 &=4\widehat{h}^*_{jk}   \quad\quad\quad\quad\quad\quad\quad\quad\quad\quad\quad\quad {\text {as}}~ \varepsilon\rightarrow0^+.
    \end{align*}
    Let $\varepsilon\rightarrow0^+$, noticing that $u$ is continuous and $\widehat{h}^*$ is locally finite we get that
    $$
    |u|^2_{\widehat{h}^*}*\rho_\varepsilon
    =\sum_{jk} (u_j\overline{u}_k\widehat{h}^*_{jk})*\rho_\varepsilon
    \longrightarrow \sum_{jk} u_j\overline{u}_k\widehat{h}^*_{jk}
    =|u|^2_{\widehat{h}^*}.
    $$
    Hence $|u|^2_{\widehat{h}^*}$ equals to $(\sup_j |u|^2_{h^*_j})^\divideontimes$ and is also plurisubharmonic.
	Notice that the representation $(\sup_j |u|^2_{h^*_j})^\divideontimes$ is independent of the choice of holomorphic frames and so is $|u|^2_{\widehat{h}^*}$.
	Hence $\widehat{h}$ is globally well-defined.

	And the uniqueness of  Griffiths semi-positive modification is owing to the strong upper semi-continuity of plurisubharmonic functions:

    \textit{Let $\phi$ be a plurisubharmonic function on a domain $D\subset\C^n$ and $B\subset D$ be a set of Lebesgue measure zero. Then for every $z\in D$, we have
    $$\limsup\limits_{D\setminus B \ni z'\to z}\phi(z')=\phi(z).$$  }

    Secondly we prove that $h$ is $L^2$ optimal if $h_j$ is Nakano semi-positive and hence the unique Griffiths semi-positive modification is Nakano semi-positive.

    We take any Stein open subset $U$ such that $E|_U$ is trivial, any K\"ahler form $\omega_U$ on $U$ and any $\psi \in {\rm Spsh}(U)\cap C^\infty(U)$ such that for any $\overline{\partial}$-closed
    $f\in C^\infty_{(n,1)} (U,E|_U)$ and
\begin{align*}
    \int_{U}^{} \langle B^{-1}_{\omega_U,\psi} f,f \rangle_{\omega_U,h} e^{-\psi} dV_{\omega_U} < +\infty.
    \end{align*}
    Let $\varphi_j=-\log\det h_j$.
    Noticing that $A=\Supp \cal{O}_U/\cal{I}(\varphi_1)$ is a proper analytic subset of Stein manifold $U$, we have $A=\{g_1=g_2=\cdots=g_l=0 \}$ for some holomorphic functions $g_j$ on $U$.
    Then $\widetilde{U}=U\setminus \{g_1=0\}$ is also Stein open and we can find Stein open sets $U_k\Subset U_{k+1}$ such that $\bigcup_k U_k=\widetilde{U}$.
    Since
    $\frac{h_j}{\det h_j}\le \frac{h}{\det h}$ and $\varphi_j\ge \varphi_1$,  we obtain
\begin{align*}
    \notag \int_{U_k}^{} \langle B^{-1}_{\omega_U,\psi} f,f \rangle_{\omega_U,h_j} e^{-\psi} dV_{\omega_U}
    &\le
    \int_{U_k}^{} \langle B^{-1}_{\omega_U,\psi} f,f \rangle_{\omega_U,\frac{h}{\det h}} e^{-\varphi_j-\psi} dV_{\omega_U} \\
    &\le \int_{U_k}^{} \langle B^{-1}_{\omega_U,\psi} f,f \rangle_{\omega_U,\frac{h}{\det h}} e^{-\varphi_1-\psi} dV_{\omega_U}
    < +\infty,
    \end{align*}
	where the finiteness of the last integral comes from that the integrability of $e^{-\varphi_1}$ on $U_k$ and the boundedness of
	$\frac{h}{\det h}$ and $f$ on $U_k$.

    Then by Nakano semi-positivity of $h_j$, there exists $u_{jk}\in  L^2_{(n,0)} (U_k,\omega_U,E|_{U},h_je^{-\psi})$ such that $\overline{\partial}u_{jk} = f$ and
\begin{equation*}
    \int_{U_k}^{} |u_{jk}|^2_{\omega_U,h_j} e^{-\psi} dV_{\omega_U}
    \le \int_{U_k} \langle B^{-1}_{\omega_U,\psi} f,f \rangle_{\omega_U,h_j} e^{-\psi} dV_{\omega_U}
    \le \int_{U_k}^{} \langle B^{-1}_{\omega_U,\psi} f,f \rangle_{\omega_U,h} e^{\varphi-\varphi_j-\psi} dV_{\omega_U}.
    \end{equation*}

    Then
\begin{equation*}
    \int_{U_k}^{} |u_{jk}|^2_{\omega_U,h} e^{-\psi} dV_{\omega_U}
    \le \int_{U_k}^{} \langle B^{-1}_{\omega_U,\psi} f,f \rangle_{\omega_U,h} e^{\varphi-\varphi_j-\psi} dV_{\omega_U}
    \le \int_{U_k}^{} \langle B^{-1}_{\omega_U,\psi} f,f \rangle_{\omega_U,\frac{h}{\det h}} e^{-\varphi_1-\psi} dV_{\omega_U}.
    \end{equation*}

    Since $h$ is bounded below by a continuous Hermitian metric, we get that $\{u_{jk}\}_j$ is bounded in $L^2_\loc(U_k)$, and especially $\{u_{jk}-u_{1k}\}$ is bounded in $L^2_\loc(U_k)$.
    Then there exists a subsequence of $\{u_{jk}-u_{1k}\}_j$ compactly converging to $u_k-u_{1k}$ on $U$ since $\dpa (u_{jk}-u_{1k})=0$.
    So we obtain $\dpa u_k=f$ on $U_k$ and it follows from Fatou's lemma and the Lebesgue dominated convergence theorem that
\begin{equation*}
    \int_{U_k}^{} |u_k|^2_{\omega_U,h} e^{-\psi} dV_{\omega_U}
    \le \int_{U_k}^{} \langle B^{-1}_{\omega_U,\psi} f,f \rangle_{\omega_U,h} e^{-\psi} dV_{\omega_U}
    \le \int_{U}^{} \langle B^{-1}_{\omega_U,\psi} f,f \rangle_{\omega_U,h} e^{-\psi} dV_{\omega_U}.
    \end{equation*}

    Repeating the above argument, we have there exists $u\in  L^2_{(n,0)} (\widetilde{U},\omega_U,E|_{\widetilde{U}},he^{-\psi})$ such that $\overline{\partial}u= f$ on $\widetilde{U}$
    and
\begin{equation*}
    \int_{\widetilde{U}}^{} |u|^2_{\omega_U,h} e^{-\psi} dV_{\omega_U}
    \le \int_{U}^{} \langle B^{-1}_{\omega_U,\psi} f,f \rangle_{\omega_U,h} e^{-\psi} dV_{\omega_U}.
    \end{equation*}
    Due to Lemma~{\rm{\Rmnum{8}}-(7.3)} in \cite{Demaillybook2012}, $u$ can extend across the analytic set $\{g_1=0\}$ and hence $\dpa u=f$ on $U$ and
\begin{equation*}
    \int_{U}^{} |u|^2_{\omega_U,h} e^{-\psi} dV_{\omega_U}
    \le \int_{U}^{} \langle B^{-1}_{\omega_U,\psi} f,f \rangle_{\omega_U,h} e^{-\psi} dV_{\omega_U}.
    \end{equation*}

\end{proof}

    Similarly, we can prove the following lemma.
\begin{lem}\label{modifition}
    Let $h$ be  Nakano semi-positive and $\psi\in\Psh(U)$. If there exists $\{\psi_j \}_j\subset \Spsh(U)\cap C^\infty(U)$ such that $\psi_j$ converges to $\psi$ locally in measure on $U$, $\psi_j$ is
    locally uniformly bounded from above on $U$ and
\begin{align*}
    \liminf_{j\rightarrow +\infty}\int_{U}^{} \langle B^{-1}_{\omega_U,\psi_j} f,f \rangle_{\omega_U,h} e^{-\psi_j} dV_{\omega_U}= C<+\infty,
    \end{align*}
    then there exists u $\in  L^2_{(n,0)} (U,\omega_U,E|_U,he^{-\psi})$ such that $\overline{\partial}u = f$ and
\begin{align*}
    \int_{U}^{} |u|^2_{\omega_U,h} e^{-\psi} dV_{\omega_U}
    \le C.
    \end{align*}
\end{lem}
\subsection{A generalized Siu's lemma}
Lastly, in order to obtain an $L^2$ extension theorem, we need a generalized Siu's lemma proved by Zhou-Zhu \cite{Siu Lemma}.
\begin{lem}\label{lem siu lemma}
	Let $\phi(z',z'')$ be a plurisubharmonic function on $\mathbb{B}^m_r\times \mathbb{B}^{n-m}_r$
such that
	\begin{align*}
	I_\phi:=\int_{z''\in \mathbb{B}^{n-m}_r} e^{-\phi(0,z'')}d\lambda_{n-m}<+\infty~~~(I_\phi:=e^{-\phi(0)}~~if~ m=n).
	\end{align*}
	Let $h$ be a nonnegative continuous function on $\mathbb{B}^m_r\times \mathbb{B}^{n-m}_r$.
	Assume that $\varepsilon,r'\in(0,r)$.
	Then
	\begin{align*}
	\lim_{\varepsilon\rightarrow0^+}\frac{1}{\lambda(\mathbb{B}_\varepsilon^m)}
	\int_{\mathbb{B}^m_\varepsilon\times \mathbb{B}^{n-m}_{r'}}
	h(z',z'')e^{-\phi(z',z'')}d\lambda_n
	=\int_{z''\in\mathbb{B}^{n-m}_{r'}}
	h(0,z'')e^{-\phi(0,z'')}d\lambda_{n-m},
	\end{align*}
	where $\lambda(\mathbb{B}^k_r):=$ the $2k$-dimensional Lebesgue measure of $\mathbb{B}_r^k$ $($if $m=n$, $\mathbb{B}^{n-m}_{r'}$ and $z''$ will disappear and $\int_{z''\in\mathbb{B}^{n-m}_{r'}}
	h(0,z'')e^{-\phi(0,z'')}d\lambda_{n-m}$ will be replaced by $h(0)e^{-\phi(0)}~)$.
\end{lem}

\begin{rem}\label{rem 1}
	From the proof of the above lemma in \cite{Siu Lemma}, in fact, it can be strengthened to the following result:\\
	Let $D\subset \C^{n-m}$ be a pseudoconvex open set and  $\phi(z',z'')$ be a plurisubharmonic function on $\mathbb{B}^m_r\times D$ such that
	\begin{align*}
	I_\phi:=\int_{z''\in D} e^{-\phi(0,z'')}d\lambda_{n-m}<+\infty~~~(I_\phi:=e^{-\phi(0)}~~if~ m=n).
	\end{align*}
	Let $h$ be a nonnegative continuous function on $\mathbb{B}^m_r\times D$.
	Assume that $\varepsilon\in(0,r)$.
	Then for any $D'\Subset D$
	\begin{align*}
	\lim_{\varepsilon\rightarrow0^+}\frac{1}{\lambda(\mathbb{B}_\varepsilon^m)}
	\int_{\mathbb{B}^m_\varepsilon\times D'}
	h(z',z'')e^{-\phi(z',z'')}d\lambda_n
	\le\int_{z''\in D'}
	h(0,z'')e^{-\phi(0,z'')}d\lambda_{n-m},
	\end{align*}
	where $\lambda(\mathbb{B}^m_\varepsilon):=$ the $2m$-dimensional Lebesgue measure of $\mathbb{B}_\varepsilon^m($ if  $m=n$, $D'$ and $z''$ will disappear and $\int_{z''\in D'}
	h(0,z'')e^{-\phi(0,z'')}d\lambda_{n-m}$ will be replaced by $h(0)e^{-\phi(0)}~)$.
\end{rem}

Let $(X,\omega)$ be an $n$-dimensional Stein manifold possessing a K\"ahler metric $\omega$, and  $s=(s_1,\cdots,s_k)$, where $s_i(1\leq i\leq k)$ are global holomorphic functions on $X$.
Assume that $s$ is transverse to the zero section $$S:=\{z\in X : s(z)=0\}$$ such that
$\dim S=n-k$.
Let $\phi$ be a plurisubharmonic function on $X$ and  $e^{-\phi} |_S\in L^1_{\loc}(S)$.
Denote $r=|s|^2=\sum |s_i|^2$.
For such an $r$, any continuous volume form $d\mu$ on $X$ induces a volume form $d\mu_r$ on $Y$
as follows.
Suppose $\theta\in C(X)$, then $d\mu_r$ will satisfy
\begin{equation}
\int_S \theta d\mu_r=\lim_{\varepsilon\to 0}\varepsilon^{-2k}
\int_{\{z\in X\colon r(z) < \varepsilon^2\}} \theta d\mu.
\end{equation}
Locally $d\mu_r$ can be computed if we introduce local coordinates $z_1,\cdots,z_n$ on $X$ so that $s_i= z_i$.
If $d\mu=\im^n\alpha dz_1\wedge d\bar z_1\wedge\cdots \wedge dz_n\wedge d\bar z_n$ and $v_k$ is the volume of the unit ball in $\C^k$,
then $d\mu_r= \im^{n-k}v_k\alpha dz_{k+1}\wedge d\bar z_{k+1}\wedge\cdots \wedge dz_n\wedge d\bar z_n$.
Direct computation gives that $d\mu_r$ induced by $dV_{X,\omega}$ is $dV_{S,\omega}/|\Lambda^k ds|^2$.
In particular, if $S$ is a point $o$, then $\int_{\{o\}}\theta d\mu_r=\theta(o)/|\Lambda^n ds(o)|^{2}$.

Similar to Lemma \ref{lem siu lemma}, we have the following lemma:
\begin{lem}\label{lem 2}
Let $X, S, \phi$ be as above,
 $X_1\Subset X_2\Subset X$  be Stein open subsets of $X$, and $\theta$ be a nonnegative upper
 semi-continuous function on $X$, then we have
\begin{align*}
\limsup_{\varepsilon\rightarrow0^+}\frac{1}{\varepsilon^{2k}}	
\int_{X_1\cap\{r<\varepsilon^2\}}\theta e^{-\phi}dV_{X,\omega}
\le \int_{X_2\cap S}\theta e^{-\phi}d\mu_r .
\end{align*}
\end{lem}
 \begin{proof} Firstly we assume that  $\theta$ is continuous.
Since $s$ is transverse to the zero section, we  can choose $\varepsilon_0 >0$ such that $\Lambda ^k ds\neq 0$ on $X_2\cap\{r<\varepsilon_0^2\}$.
Then  there is a finite Stein open covering $\{\Omega_l\}_{l=1}^p$ of $X_1$ such that $X_1\cap\{r<\varepsilon_0^2\}\Subset\bigcup_{l=1}^p\Omega_l \Subset X_2$, and $\Omega_l\bigcap\{r<\varepsilon_0^2\}$ is biholomorphic to $(\Omega_l\cap S)\times\mathbb{B}^k_{\varepsilon_0}$ (see \cite{ZhouZhuJDG}).
By partition of unit, there exist $\{\rho_l\}_{l=1}^p\subset C^\infty(X)$
satisfying $\rho_l|_{(\Omega_l)^c}=0,\ 1\leq l\leq p$ and $\sum_{l=1}^p\rho_l=1$ on $X_1\bigcap\{r<\varepsilon_0^2\}$.
Since $e^{-\phi}\in L^1_{\loc}(S)$, by Remark~\ref{rem 1} we have
\begin{align*}
\limsup_{\varepsilon\rightarrow0^+}\frac{1}{\varepsilon^{2k}}	
\int_{X_1\cap\{r<\varepsilon^2\}}\theta e^{-\phi}dV_{X,\omega}
&=\limsup_{\varepsilon\rightarrow0^+}\sum_{l=1}^p\frac{1}{\varepsilon^{2k}}	\int_{X_1\cap\{r<\varepsilon^2\}}\rho_l \theta e^{-\phi}dV_{X,\omega}\\
&\leq\sum_{l=1}^p\limsup_{\varepsilon\rightarrow0^+}\frac{1}{\varepsilon^{2k}}	\int_{\Omega_l \cap\{r<\varepsilon^2\}}\rho_l \theta e^{-\phi}dV_{X,\omega}\\
&= \sum_{l=1}^p\int_{\Omega_l\cap S}\rho_l \theta e^{-\phi}d\mu_r\\
&\leq \int_{X_2\cap S}\theta e^{-\phi}d\mu_r.
\end{align*}

Now for an upper semi-continuous function $\theta$, there is a sequence $\{\theta_j\}$ of continuous functions decreasingly converging to $\theta$. Then for all $j$, we have
\begin{align*}
  \limsup_{\varepsilon\rightarrow0^+}\frac{1}{\varepsilon^{2k}}	
\int_{X_1\cap\{r<\varepsilon^2\}}\theta e^{-\phi}dV_{X,\omega} &\le \limsup_{\varepsilon\rightarrow0^+}\frac{1}{\varepsilon^{2k}}	
\int_{X_1\cap\{r<\varepsilon^2\}}\theta_j e^{-\phi}dV_{X,\omega}  \\
   &\le  \int_{X_2\cap S}\theta_j e^{-\phi}d\mu_r .
\end{align*}
Then by the  monotone convergence theorem we get that
\begin{align*}
  \limsup_{\varepsilon\rightarrow0^+}\frac{1}{\varepsilon^{2k}}	
\int_{X_1\cap\{r<\varepsilon^2\}}\theta e^{-\phi}dV_{X,\omega}\le  \int_{X_2\cap S}\theta e^{-\phi}d\mu_r.
\end{align*}
 \end{proof}
\section{$L^2$ Extension and Its Applications}
\subsection{An Ohsawa-Takegoshi type $L^2$ extension theorem}

\begin{thm}\label{main1}
Let $(X,\omega)$ be an $n$-dimensional Stein manifold possessing a K\"ahler metric $\omega$, and  $s=(s_1,\cdots,s_k)$, where $s_i(1\leq i\leq k)$ are global holomorphic functions on $X$.
Assume that $s$ is generically transverse to the zero section, and let
 $$S:=\{z\in X : s(z)=0\},\ \ \ \ {{\dim}}\ S_{\reg}=n-k.$$ Assume that $r:=|s|^2\leq e^{-2}$ on $X$.
Let $E\to X$ be a holomorphic vector bundle with a Nakano semi-positive  singular Hermitian metric $h$. Assume that $\varphi\not\equiv-\infty$ on every connected component of $S$, where $\varphi=- \log \det  h$ is the local weight.
	Let $f$ be a holomorphic section of $K_X\otimes E|_S$ over $S$ and
$$
\int_S|f|^2_hd\mu_r<+\infty,
$$
then there is a holomorphic section $F$ of $K_X\otimes E$ such that $F|_S=f$ on $S$ and
\begin{equation*}
\int_X  \frac{|F|^2_h}{ r^k(\log r)^2} dV_{X,\omega}\leq C \int_S|f|^2_h d\mu_r,
\end{equation*}
where $C$ is a uniform constant only depending on $k$.
	If $k=n$, that is, $S$ is a point $o$, the result states that if $\vp(o)>-\infty$, then for any $a\in (K_X\otimes E)_o$,
	there is a holomorphic section $F$ of $K_X\otimes E$ such that $F(o)=a$ and
	\begin{equation*}
	\int_X  \frac{|F|^2_h}{ r^n(\log r)^2} dV_{X,\omega}\leq C\frac{ |a|^2_{h(o)}}{ |\Lambda^n ds(o)|^{2}}.
	\end{equation*}

\end{thm}

\begin{proof}
\vskip0.3cm
\noindent {\bf Step One: Reduce to the case that $S$ is smooth, $E$ is trivial  and $\det h$ is locally integrable.}
\vskip0.5cm
	Since $s$ is generically transverse to the zero section $S$, $A_1:=\{x\in S | \Lambda^k ds= 0\}$ is an analytic subset of $X$.  And since the plurisubharmonic function $\varphi\not \equiv -\infty$ on every  $(n-k)$-dimensional irreducible component of $S$, there exists a proper analytic set $A_2\subset S$ such that $e^{-\varphi} |_{S_\reg\setminus A_2}\in L^1_{\loc}(S_\reg\setminus A_2)$. Since $X$ is Stein, $A_2$ is also an analytic set of $X$. In addition,
	since $X$ is Stein, there exists a proper analytic set $A_3\subset X$ such that $A_3$ does not contain any component of $S$ and $E|_{X\setminus A_3}$ is trivial.
	In fact, on each  $(n-k)$-dimensional irreducible component $S_j$ of $S$,
	one can take $z_j\in S_j$ and a local holomorphic frame $\{e_l^j\}$  near $z_j$,
	then by Cartan's extension theorem, there exists holomorphic sections $\{e_l\}$ such that $e_l(z_j)=e_l^j(z_j)$ for all $j$.
	Then $\{e_l\}$ is a holomorphic frame outside a proper analytic subset $A_3$ of $X$ and $z_j\notin A_3$  for all $j$. Hence $A_3\not\supset S_j$ for all $j$.

Since $A_0:=A_1\cup A_2\cup A_3$ is an analytic set of $X$ which does not contain any  $(n-k)$-dimensional irreducible component $S_j$ of $S$. Thus swe can take $z_j\in S_j\setminus A_0$ for each $j$, then by Cartan's extension theorem, there exists a holomorphic function $g$ such that $g|_{A_0}=0$ and $g(z_j)=1$ for all $j$. Take $A=\{z\in X| g(z)=0\}$, then  $\tilde X:=X\setminus A$ is also Stein and $\tilde S:=S\setminus A$ is smooth, $E|_{\tilde X}$ is trivial  and $\det h|_{\tilde S}$ is locally integrable.

If we have proven that there is a holomorphic section $F$ of $K_{X}\otimes E|_{\tilde X}$ such that $F|_{\tilde S}=f$ on $\tilde S$ and
\begin{equation*}
\int_{\tilde X}  \frac{|F|^2_h}{ r^k(\log r)^2} dV_{X,\omega}\leq C \int_{\tilde S}|f|^2_h d\mu_r,
\end{equation*}
where $C$ is a uniform constant only depending on $k$.

Then by \cite[Lemma~{$\rm{\Rmnum{8}}$-(7.3)}]{Demaillybook2012}, $F$ can be extended across $A$ with $F|_S=f$ and
	\begin{align*}
	\int_{X}
	\frac{|F|^2_h
	}{r^{k} (\log r)^2}dV_{X,\omega}
	\le C\int_{S}|f|^2_h d\mu_r.
	\end{align*}

\vskip0.3cm
\noindent {\bf Step Two: Obtain an $L^2$ extension of openness type.}
\vskip0.5cm

Now  $\tilde X$ is Stein and $\tilde S$ is smooth, $E|_{\tilde X}$ is trivial  and $e^{-\varphi}|_{\tilde S}$ is locally integrable.
Then we can choose a family $\{\tilde X_j\}$ of Stein open sets in $\tilde X$ such that $\tilde X_j\Subset \tilde X_{j+1}$ and $\bigcup_j \tilde X_j=\tilde X$.
	Since $X$ is Stein, by Cartan's extension theorem, there is a holomorphic section $\hat{f}$ of $K_X\otimes E$ over $X$ such that $\hat{f}|_S=f$.
	Due to the assumption that $e^{-\varphi}|_{ S}\in L^1_{\rm{loc}}( S)$,
	it follows from the $L^2$-extension theorem for holomorphic functions we know that for every point $z\in S$,
	there exists a neighborhood $U_z$ of $z$ in $X$, such that $\int_{U_z}e^{-\varphi}<+\infty$.
	By the relative compactness of $\tilde S_{j+1}:=\tilde{S}\cap{X}_{j+1}$, we can find a neighborhood $U_{j+1}$ of $\tilde S_{j+1}$, such that $\int_{U_{j+1}}e^{-\varphi}<+\infty$.
	It is easy to see that
	 there exists an $\varepsilon_j>0$ such that $\tilde X_j\cap \{r<\varepsilon_j\}\Subset U_{j+1}$. Therefore $\int_{ \tilde X_j\cap\{r<\varepsilon_j\}} e^{-\varphi}<+\infty$ and then $$\int_{\tilde X_j\cap \{r<\varepsilon_j\}} |\hat{f}|^2_hdV_{X,\omega}=\int_{\tilde X_j\cap\{r<\varepsilon_j\}} |\hat{f}|^2_{\frac{h}{\det h}}e^{-\varphi}dV_{X,\omega}<+\infty.$$

	Let $\phi(z)=\log r$, $\phi_\varepsilon(z)=\log (r+\varepsilon^2)$ and $\phi_{\delta,\varepsilon}(z)=\log (r+\delta^2+\varepsilon^2)=\log (e^{\phi_\delta}+\varepsilon^2)$ on $\tilde X$.	
	Take $\tau(t)=kt,\chi(t)=-\log[-t+\log (-t)]$ when $t<-1$, then
	$$\chi'=\frac{1-t^{-1}}{-t+\log(-t)}>0~~,~~ \chi''=\frac{(1-t^{-1})^2}{[-t+\log(-t)]^2}+\frac{t^{-2}}{-t+\log(-t)}>0.$$	
    Let $0\le\rho=\rho_{\varepsilon}\le1 $ be a smooth function on $\mathbb{R}$ such that $\rho=1$ on $(-\infty, a_\varepsilon)$, $\rho=0$ on $(b_\varepsilon,+\infty)$ and $|\rho'|\le \frac{1+c_\varepsilon}{b_\varepsilon-a_\varepsilon}$ on $(a_\varepsilon,b_\varepsilon)$, where $b_\varepsilon>a_\varepsilon>0$, $c_\varepsilon>0$ are to be determined later and $\lim_\varepsilon b_\varepsilon=\lim_\varepsilon c_\varepsilon=0$.
	Clearly $v=\hat{f} \overline{\partial}\rho(r)$ is a $\dpa$-closed (0,1)-form with value of $K_{X}\otimes E$ over $\tilde X$ and for $\varepsilon>0$ small enough, we have
	\begin{align*}
	\int_{\tilde X_j} |v|^2_h e^{-\tau(\phi)}dV_{X,\omega}
	= \int_{\{a_\varepsilon\le r \le b_\varepsilon\}\cap \tilde X_j} |\hat{f}|^2_h |\rho'|^2|\dpa r|^2 e^{-\tau(\log r)}dV_{X,\omega}<+\infty,
	\end{align*}
	where $\ve$ small enough such that $b_\ve< \ve_j$.
	By the assumption $h$ is Nakano semi-positive on $\tilde X$, it follows from Proposition \ref{prop he^} that $h'=he^{-\tau(\phi)}$ is also Nakano semi-positive.
	Thus there exists a solution $\hat{u}$ of $\overline{\partial}\hat u=v$ such that
	\begin{align*}
	 \int_{\tilde X_j} |\hat{u}|^2_{h'}e^{-\Psi}dV_{X,\omega}\le
	 \int_{\tilde X_j} \langle [\im \pa\dpa\Psi\otimes\mathrm{Id}_E,\Lambda_\omega]^{-1}v,v\rangle_{h'}e^{-\Psi}dV_{X,\omega}<+\infty,
	 \end{align*}
	where $\Psi$ is a smooth strictly plurisubharmonic exhaustion function on the Stein manifold $\tilde X$. Since $\tilde X_j$ is relatively compact in $\tilde X$, we know $\Psi$ is bounded on $\tilde X_j$, so $\int_{\tilde X_j} |\hat{u}|^2_{h'} < +\infty$. Therefore we can find a solution $u:=u_{\varepsilon,j}$ of $\overline{\partial}u=v$ with minimal norm in $L^2_{(n,0)}({\tilde X_j},E,h')$, i.e., $u \perp $ Ker $\overline{\partial}$ in $L^2_{(n,0)}({\tilde X_j},E,h')$.
	Since $\chi(\phi_{\varepsilon})$ is a smooth plurisubharmonic function on $\tilde X$ and hence bounded on $\tilde X_j$, we have $ue^{\chi(\phi_{\varepsilon})}\bot$ Ker$\overline{\partial}$ in $L^2_{(n,0)}({\tilde X_j}, E, h' e^{-\chi(\phi_{\varepsilon})})$.	
	
	Let  $K_{\delta}(z)=\tau(\phi_\delta)+\chi(\phi_{\delta,\varepsilon})+\delta \Psi$, we denote that
	$$B_\delta:=[\im \pa\dpa K_{\delta}(z)\otimes\mathrm{Id}_E,\Lambda_\omega]
	=[\im \pa\dpa (\tau(\phi_\delta)+\chi(\phi_{\delta,\varepsilon})+\delta \Psi)\otimes\mathrm{Id}_E,\Lambda_\omega].$$
	Since $\log r$ is plurisubharmonic, we have $\im  r\pa\dpa r \ge \im \pa r\wedge \dpa r$. Therefore,
		\begin{align*}
		 \im\pa\dpa K_\delta
		\ge &\delta\cdot \im\pa\dpa \Psi
	+ \left(  \frac{\chi''}{(r+\varepsilon^2+\delta^2)^2}+
	\frac{\tau'\delta^2}{r(r+\delta^2)^2}+\frac{(\delta^2+\epsilon^2)\chi'}{r(r+\delta^2+\varepsilon^2)^2}\right) \cdot \im\pa r\wedge \dpa r \\
	 \ge& 	\left(  \frac{\chi''}{(r+\varepsilon^2+\delta^2)^2}+
	 \frac{\tau'\delta^2}{r(r+\delta^2)^2}+\frac{(\delta^2+\epsilon^2)\chi'}{r(r+\delta^2+\varepsilon^2)^2}\right) \cdot \im\pa r\wedge \dpa r,
		\end{align*}
	where $\chi'=\chi'(\phi_{\delta,\ve}), \chi''=\chi''(\phi_{\delta,\ve})$.
	It follows that
	\begin{align*}
	 |\dpa r|^2_{\im\pa\dpa K_{\delta}}\le \frac{1}{\frac{\chi''}{(r+\varepsilon^2+\delta^2)^2}+ \frac{\tau'\delta^2}{r(r+\delta^2)^2}+\frac{(\delta^2+\epsilon^2)\chi'}{r(r+\delta^2+\varepsilon^2)^2}}.
	\end{align*}
Let $\delta\rightarrow 0^+$, then we get that
	$$\limsup_{\delta\rightarrow 0^+}|\dpa r|^2_{\im \pa\dpa K_{\delta}}
	\leq \frac{1}{\frac{\chi''}{(r+\varepsilon^2)^2}+\frac{\chi'\varepsilon^2}{r(r+\varepsilon^2)^2}}.$$

    So we have
	\begin{align*}
	& \limsup_{\delta\rightarrow 0^+}\int_{\tilde X_j} \langle B_\delta^{-1}\overline{\partial}(ue^{\chi(\phi_\varepsilon) }), \overline{\partial}(ue^{\chi(\phi_\varepsilon) })\rangle_he^{-K_{\delta}(z)}dV_{X,\omega}\\
	=&\limsup_{\delta\rightarrow 0^+}\int_{ \tilde X_j} \langle B^{-1}_\delta(\hat{f}\dpa\rho+\chi' \dpa\phi_\varepsilon\wedge u), \hat{f}\dpa\rho+\chi' \dpa\phi_\varepsilon\wedge u\rangle_{h}e^{2\chi(\phi_\varepsilon)-K_{\delta}(z)}dV_{X,\omega}\\
	=&\limsup_{\delta\rightarrow 0^+}\int_{{\tilde X_j}}|\dpa r|^2_{\im \pa\dpa K_{\delta}} |\rho' \hat{f}+(r+\varepsilon^2)^{-1}\chi'  u|^2_{h}e^{2\chi(\phi_\varepsilon)-K_{\delta}(z)}dV_{X,\omega}\\
	\le& \int_{ \tilde X_j}
	\frac{r|(r+\varepsilon^2)\rho'(r) \hat{f}+\chi'(\phi_\varepsilon) u|^2_{h}~e^{\chi(\phi_\varepsilon)-\tau(\phi)}}
	{r{\chi''(\phi_\varepsilon)}+\chi'(\phi_\varepsilon)\varepsilon^2}dV_{X,\omega}<+\infty,
 \end{align*}
 where for the  inequality we use the dominated convergence theorem for fixed $\ve>0$.

Applying Lemma~\ref{modifition} to the metric $h$ and the plurisubharmonic function ${\tau(\phi)+\chi(\phi_\varepsilon)}$ which is approximated by the smooth strictly plurisubharmonic functions $K_{\delta}(z)=\tau(\phi_\delta)+\chi(\phi_{\delta,\varepsilon})+\delta\Psi$ as $\delta\rightarrow 0^+$,
	there exists $\tilde{u}\in L^2_{(n,0)}({\tilde X_j},E, h e^{-\tau(\phi)-\chi(\phi_\varepsilon)})$ satisfying $\dpa \tilde{u}=\dpa (ue^{\chi(\phi_\varepsilon)})$ and
	\begin{align*}
	\int_{\tilde X_j} |\tilde{u}|^2_{h}e^{-\chi(\phi_\varepsilon)-\tau(\phi)}dV_{X,\omega}
	\le  \int_{\tilde X_j}
	\frac{r|(r+\varepsilon^2)\rho'(r) \hat{f}+\chi'(\phi_\varepsilon) u|^2_{h}~e^{\chi(\phi_\varepsilon)-\tau(\phi)}}
	{r{\chi''(\phi_\varepsilon)}+\chi'(\phi_\varepsilon)\varepsilon^2}dV_{X,\omega}.
	\end{align*}	

	Considering that $ue^{\chi(\phi_\varepsilon)}$ solves the equation $\overline{\partial}\tilde{u}=\overline{\partial}(ue^{\chi(\phi_\varepsilon)})$ for $\tilde{u}$ with minimal norm in $L^2_{(n,0)}({\tilde X_j},E, he^{-\chi(\phi_\varepsilon)-\tau(\phi)})$, we have the following estimate
	\begin{align*}
	\int_{\tilde X_j} |ue^{\chi(\phi_\varepsilon)}|^2_{h}e^{-\chi(\phi_\varepsilon)-\tau(\phi)}dV_{X,\omega}
	\le \int_{\tilde X_j}
	\frac{r|(r+\varepsilon^2)\rho'(r) \hat{f}+\chi'(\phi_\varepsilon) u|^2_{h}~e^{\chi(\phi_\varepsilon)-\tau(\phi)}}
	{r{\chi''(\phi_\varepsilon)}+\chi'(\phi_\varepsilon)\varepsilon^2}dV_{X,\omega}.
	\end{align*}
Since
	\begin{align*}
	|(r+\varepsilon^2)\rho'(r) \hat{f}+\chi'(\phi_\varepsilon) u|^2_{h}
	\le 2(r+\varepsilon^2)^2|\rho'|^2|\hat{f}|^2_h+(1+\mathbbm{1}_{\Supp\rho'_{\varepsilon}(r)})
|\chi'(\phi_\varepsilon)|^2|u|^2_h,
	\end{align*}
    	we obtain that {\small
	\begin{align}\label{formula right}
	\int_{\tilde X_j} \Big(1- \frac{r(1+\mathbbm{1}_{\Supp\rho'_{\varepsilon}(r)})|\chi'(\phi_\varepsilon)|^2}
	{r{\chi''(\phi_\varepsilon)}+\chi'(\phi_\varepsilon)\varepsilon^2}\Big)
	|u|^2_he^{\chi(\phi_\varepsilon)-\tau(\phi)}dV_{X,\omega}
	\le
	\int_{\tilde X_j } \frac{2r(r+\varepsilon^2)^2\rho'(r)^2}
	{\chi'(\phi_\varepsilon)\varepsilon^2}|\hat{f}|^2_h  e^{\chi(\phi_\varepsilon)-\tau(\phi)}dV_{X,\omega}.
	\end{align}}
	
	Next we need to control the LHS and RHS of (\ref{formula right}) respectively as $\ve$ goes to $0$.

\vskip0.3cm
\noindent {\bf Step Three: Apply the generalized Siu's lemma to control the  RHS of (\ref{formula right})}
\vskip0.5cm	
	
	Recall that $ \tau(t)=kt,\chi(t)=-\log[-t+\log (-t)]$,
	 and
	$\chi'=\frac{1-t^{-1}}{-t+\log(-t)},$
	then ${e^\chi}/\chi'=1/(1-t^{-1})$.
	Since $|\rho'|\le \frac{1+c_\varepsilon
	}{b_\varepsilon-a_\varepsilon}$, we get
	    	
		\begin{align*}
\int_{\tilde X_j } \frac{2r(r+\varepsilon^2)^2\rho'(r)^2}
	{\chi'(\phi_\varepsilon)\varepsilon^2}|\hat{f}|^2_h  e^{\chi(\phi_\varepsilon)-\tau(\phi)}dV_{X,\omega}
	\le \int_{\tilde X_j\cap\{a_\varepsilon\le r\le b_\varepsilon
			\}  }
		\frac{2(r+\varepsilon^2)^2r
		}{ (1-{\phi_\varepsilon}^{-1})\varepsilon^2}
		\cdot\frac{(1+c_\varepsilon)^2
		}{(b_\varepsilon-a_\varepsilon)^2}
		\cdot\frac{|\hat{f}|^2_h
		}{r^{k}  }dV_{X,\omega}.
		\end{align*}
		
    Take $a_\varepsilon=a\varepsilon^2, b_\varepsilon=b\varepsilon^2$ where $b>a>0$, then it follows immediately that when $a_\ve\le r\le b_\ve$,
	$$
	\frac{(r+\ve^2)^2r
	}{ (1-{\phi_\ve}^{-1})\ve^2}
	\cdot\frac{(1+c_\ve)^2
	}{(b_\ve-a_\ve)^2}
	\le
	\frac{(b\ve^2+\ve^2)^2  b\ve^2
	}{(b\ve^2-a\ve^2)^2\ve^2}
	\cdot\frac{(1+c_\ve)^2}{ (1-{\phi_\ve}^{-1})}
	\le C_1
	,
	$$
	since $\phi_\ve=\log(r+\ve^2)\le -1$.
	
	Thus
\begin{align}\label{111}
	\int_{\tilde X_j } \frac{2r(r+\varepsilon^2)^2\rho'(r)^2}
	{\chi'(\phi_\varepsilon)\varepsilon^2}|\hat{f}|^2_h  e^{\chi(\phi_\varepsilon)-\tau(\phi)}dV_{X,\omega}\le C_1\int_{\tilde X_j\cap\{a_\varepsilon\le r\le b_\varepsilon\}  }
	\frac{|\hat{f}|^2_h
	}{r^{k}  }dV_{X,\omega}.
\end{align}

		Since $|\hat{f}|^2_{\frac{h}{\det h}}$ is non-negative and upper semi-continuous on $\tilde X$,  applying the Lemma~\ref{lem 2} we  obtain that
	\begin{align*}
	\limsup_{\varepsilon\rightarrow0^+} \frac{1}{\varepsilon^{2k}}\int_{\tilde{{X}}_j\cap\{r\le b\varepsilon^2\}} |\hat{f}|^2_{\frac{h}{\det h}}e^{-\varphi}dV_{X,\omega}
	&\le b^{k}\int_{\tilde X_{j+1}\cap \tilde S}|\hat{f}|^2_{\frac{h}{\det h}}e^{-\varphi}d\mu_r\\
	&\le b^{k}\int_{S}|f|^2_{ h}d\mu_r.
	\end{align*}
	So
\begin{align}\label{siu lemma result}
	\limsup_{\ve\to0^+}\int_{\tilde X_{j}\cap\{a_\ve \le r\le b_\ve\}}\frac{|\hat{f}|^2_h}{r^k}dV_{X,\omega}\le \limsup_{\ve\to0^+}\int_{\tilde X_{j}\cap\{r\le b_\ve\}}\frac{|\hat{f}|^2_h}{a^k\ve^{2k}}dV_{X,\omega}
	\le \frac{b^{k}}{a^k}\int_{S}|f|^2_{ h}d\mu_r.
\end{align}
	Together with (\ref{111}), we have
\begin{align}\label{a1}
	\limsup_{\ve\to0^+}\int_{\tilde X_j } \frac{2r(r+\varepsilon^2)^2\rho'(r)^2}
	{\chi'(\phi_\varepsilon)\varepsilon^2}|\hat{f}|^2_h  e^{\chi(\phi_\varepsilon)-\tau(\phi)}dV_{X,\omega}
	\le C_2\int_{S}|f|^2_{ h}d\mu_r
\end{align}
	for some positive constant $C_2$ only depending on $k$.

\vskip0.3cm
\noindent {\bf Step Four: Estimate the  LHS of (\ref{formula right})}
\vskip0.5cm	

Since
  $\tau(t)=kt,\chi(t)=-\log[-t+\log (-t)]$ when $t<-1$, then
 	$$\chi'=\frac{1-t^{-1}}{-t+\log(-t)},~~ \chi''=\frac{(1-t^{-1})^2}{[-t+\log(-t)]^2}+\frac{t^{-2}}{-t+\log(-t)}.$$

On $ \{\mathbbm{1}_{\Supp\rho'_{\varepsilon}(r)}=0\}$, we have {\small	
	\begin{align*}
	\Big(1-(1+\mathbbm{1}_{\Supp\rho'_{\varepsilon}(r)}) \frac{r|\chi'(\phi_\ve)|^2}
	{r{\chi''(\phi_\ve)}+\chi'(\phi_\ve)\ve^2}\Big)
	e^{\chi(\phi_\ve)}
	&\ge\Big(1- \frac{|\chi'(\phi_\ve)|^2}
	{{\chi''(\phi_\ve)}}\Big)
	e^{\chi(\phi_\ve)}\\
	&=\Big(1-\frac{(1-{\phi_\ve}^{-1})^2 }{(1-{\phi_\ve}^{-1})^2+\frac{-{\phi_\ve}+\log(-{\phi_\ve})}{{\phi_\ve}^2}}\Big)
	\frac{1
	}{-{\phi_\ve}+\log(-{\phi_\ve})  }\\
	&=
	\frac{\phi_\ve^{-2} }{(1-{\phi_\ve}^{-1})^2+\frac{-{\phi_\ve}+\log(-{\phi_\ve})}{{\phi_\ve}^2}}
	\ge C_3 \phi_\ve^{-2}
	\end{align*}}
	for some positive constant $C_3$ independent of $\ve$ since $-\phi_\ve=-\log(r+\ve^2)\ge 1$.
	
    On $\{\mathbbm{1}_{\Supp\rho'_{\varepsilon}(r)}=1\}$, we have $a\ve^2\le r \le b\ve^2$. Since $$-\log({(b+1)\ve^2})\le -\phi_\ve\le -\log({(a+1)\ve^2})$$ for $\ve$ small enough, we get that
    $
    \chi'(\phi_\ve)=\frac{1-\phi_\ve^{-1}}{-\phi_\ve+\log(-\phi_\ve)} \thicksim\frac{1}{-\phi_\ve}$ and $e^{\chi(\phi_\ve)}=\frac{1}{-\phi_\ve+\log(-\phi_\ve)}\thicksim\frac{1}{-\phi_\ve}$.
    Therefore,
    \begin{align*}
	\Big(1- \frac{2r|\chi'(\phi_\ve)|^2}
	{r{\chi''(\phi_\ve)}+\chi'(\phi_\ve)\ve^2}\Big)
	e^{\chi(\phi_\ve)}
	\ge & \Big(1- \frac{2r|\chi'(\phi_\ve)|^2}
	{\chi'(\phi_\ve)\ve^2}\Big)
	e^{\chi(\phi_\ve)}\\
	=& \big(1-\frac{2r}{\ve^2} \chi'(\phi_\ve)\big)e^{\chi(\phi_\ve)}
	\ge  C_4 \phi_\ve^{-2}
	\end{align*}
	for some positive constant $C_4$ independent of $\ve$.
	
    Thus for $\varepsilon>0$ small enough, the LHS of (\ref{formula right})
	\begin{align}
	\int_{\tilde X_j} \Big(1- \frac{r(1+\mathbbm{1}_{\Supp\rho'_{\varepsilon}(r)})|\chi'(\phi_\varepsilon)|^2}
	{r{\chi''(\phi_\varepsilon)}+\chi'(\phi_\varepsilon)\varepsilon^2}\Big)
	|u|^2_he^{\chi(\phi_\varepsilon)-\tau(\phi)}dV_{X,\omega}
	\ge
	C_5\int_{\tilde X_j}
	\frac{|u|^2_h}{r^k (\log({r+\ve^2}))^2}dV_{X,\omega}.
	\end{align}
	
\vskip0.3cm
\noindent {\bf Step Five: Construct a holomorphic extension with the demanded estimate. }
\vskip0.5cm	

Since $\dpa u=\hat{f}  \dpa \rho(r)=0$ in $\{r< a_\varepsilon\}\cap \tilde X_j$,  $u$ is holomorphic near $\tilde S\cap \tilde X_j$.
	 Since $r^{-k}$ is not integrable near $\tilde S$, combining (\ref{formula right}) and (\ref{a1}), we get that $u|_{\tilde S\cap \tilde X_j}=0$ by the Fubini-Tonelli theorem.

	Let $F_{\varepsilon,j}=\hat{f}\rho(r)-u$ on $\tilde X_j$, then $\dpa F_{\varepsilon,j}=v-\dpa u=0$ and $F_{\varepsilon,j}=f$ on $\tilde S\cap \tilde X_j$. And by (\ref{formula right}) and (\ref{a1}), we obtain
that
\begin{align*}
	\int_{\tilde X_j}
	|F_{\varepsilon,j}|^2_h dV_{X,\omega}
	\le&
	2\int_{\tilde X_j}
	|\hat{f}\rho_\ve(r)|^2_hdV_{X,\omega}
	+2C_7\int_{\tilde X_j}
	\frac{|u|^2_h}{r^k (\log({r+\ve^2}))^2}dV_{X,\omega}\\
	\le &
	2\int_{\tilde X_j\cap\{r\le b_\ve\}}
	|\hat{f}|^2_hdV_{X,\omega}
	+ 2\frac{C_2C_7}{C_5}\int_S |f|_h^2 d\mu_r,
\end{align*}	
	where $C_7=\sup_{0\le r<e^{-2}} r^k(\log{r})^2<+\infty$.

	Hence $\{F_{\varepsilon,j}\}_\ve$ is $L^2$ bounded on $\tilde{X}_j$ for each fixed $j$, by Montel' theorem we can choose a subsequence of $\{F_{\varepsilon,j}\}_\ve$ compactly converging to a holomorphic section $F_j$ on $\tilde{X}_j$.
	For  convenience, we denote such subsequence by $F_{\ve,j}$.

Now we need to estimate $$\int_{\tilde X_j}
	\frac{|F_{j}|^2_h}{r^k (\log{r})^2}dV_{X,\omega}.$$

Restrict the integrand to the region  $\tilde X_j\cap\{r\ge a_\ve\}$ and we get that
	
\begin{align}\label{ineq restricted}
	\int_{\tilde X_j\cap\{r\ge a_\ve\}}
	\frac{|F_{\varepsilon,j}|^2_h}{r^k (\log{r})^2}dV_{X,\omega}	
    &\le 2\int_{\tilde X_j\cap\{r\ge a_\ve\}}
    \frac{|\hat{f}\rho_\ve(r)|^2_h}{r^k (\log{r})^2}dV_{X,\omega}+2\int_{\tilde X_j}
	\frac{|u|^2_h}{r^k (\log{r})^2}dV_{X,\omega}\nonumber\\  & \le
    2\int_{\tilde X_j\cap\{r\ge a_\ve\}}
    \frac{|\hat{f}\rho_\ve(r)|^2_h}{r^k (\log({r+\ve^2}))^2}dV_{X,\omega}
    + 2\frac{C_2}{C_5}\int_S |f|_h^2 d\mu_r,
\end{align}
	
By (\ref{siu lemma result}), we have
	$$ \lim_{\ve\to0^+}\int_{\tilde X_j\cap\{r\ge a_\ve\}}
	\frac{|\hat{f}\rho_\ve(r)|^2_h}{r^k (\log({r+\ve^2}))^2}dV_{X,\omega}
	\le
	\lim_{\ve\to0^+}
	\frac{1}{(\log({b\ve^2+\ve^2}))^2}
	\int_{\tilde X_j\cap\{ a_\ve\le r\le b_\ve\}}
	\frac{|\hat{f}|^2_h}{r^k}dV_{X,\omega}=0.$$

Then in (\ref{ineq restricted}), let $\ve$ goes to $0$, and by Fatou's lemma we obtain that
	
\begin{align*}
	\int_{\tilde X_j}
	\frac{|F_{j}|^2_h}{r^k (\log{r})^2}dV_{X,\omega}	
	\le
	2\frac{C_2}{C_5}\int_S |f|_h^2 d\mu_r,
\end{align*}
	
	By a standard limitation argument via Montel's theorem again,  there is a subsequence of$\{F_j\}$ compactly converging to an $F\in H^0(\tilde X,K_X\otimes E)$ with
\begin{align*}
	\int_{\tilde X}
	\frac{|F|^2_h}{r^k (\log{r})^2}dV_{X,\omega}	
	\le
	2\frac{C_2}{C_5}\int_S |f|_h^2 d\mu_r.
\end{align*}
 \end{proof}	

\begin{rem}
\begin{enumerate}
  \item [$(1)$]
In fact, taking $a=\frac{k}{k+2}, b=1$ and calculating more precisely, we can obtain the estimate with the constant $2\frac{C_2}{C_5}=3e^2(k+2)^2$.
	The process is boring and cumbersome, and we omit it here.
  \item [$(2)$]
we can also obtain the above $L^2$ extension theorem for quasi-Stein manifolds (for example, projective manifolds).  	
\item[$(3)$] From our proof, we can actually weaken the condition that $h$ is Nakano semi-positive to be that
$h^*$ is locally bounded above, $|u|^2_\frac{h}{\det h}$ is upper semi-continuous for any local holomorphic section $u$, $\det h$ is Griffiths semi-positive and $h$ is $L^2$ optimal.
\end{enumerate}	
\end{rem}

\cite[Theorem 3.2]{DNWZ20} (\cite[Theorem 3.5]{HI19}) showed that the {\em multiple coarse $L^2$ estimate} condition implies Griffiths semi-positivity of (locally H\"older) continuous Hermitian metrics by proving an $L^2$ extension theorem from a single point. Similarly, we can show the following result by Theorem \ref{main1} but omit the proof.

\begin{cor}
  Let $E$ be a holomorphic vector bundle endowed with a singular Hermitian metric $h$. Assume that
  \begin{enumerate}
    \item[$(i)$]  $h^*$ and $h\otimes \det h^*$ are upper semi-continuous;
    \item[$(ii)$]  $\det h$ is Griffiths semi-positive;
    \item[$(iii)$] $(E,h)$ satisfies the multiple coarse $L^2$-estimate condition $($see \cite[Definition 1.1(2)]{DNWZ20}$)$.
  \end{enumerate} Then $(E,h)$ satisfies the multiple coarse $L^2$-extension condition $($see \cite[Definition 1.2(2)]{DNWZ20}$)$. In particular, $h$ is Griffiths semi-positive.
\end{cor}

	Let  $Y$ be a closed complex submanifold f $X$ and $\iota$ the embedding map, then Griffiths positivity of $(E,h)$ implies Griffiths positivity of $(\iota^*E,\iota^*h)$ for singular Hermitian metrics and  Nakano positivity of $(E,h)$ implies Nakano positivity of $(\iota^*E,\iota^*h)$ for smooth Hermitian metrics.
	It is natural to ask whether Nakano positivity of $(E,h)$ implies Nakano positivity of $(\iota^*E,\iota^*h)$ for singular Hermitian metrics.
	Following the idea in the proof of  Theorem \ref{main1}, we can answer the question affirmatively.

\begin{pro}\label{pro nak res nak}
	 If $(E,h)$ is a locally Nakano semi-positive on $X$, then for any complex submanifold $Y\subset X$ such that $\iota^*\det h$ is finite almost everywhere on $Y$, $(\iota^*E,\iota^*h)$ is also locally  Nakano semi-positive on $Y$, where $\iota$ is the embedding map from $Y$ to $X$.
\end{pro}

\begin{proof}
	By induction, we only need to prove the case when $\dim Y=n-1$.
	It is clear that $\iota^*h$ is Griffith semi-positive on $Y$.
	Thus we only need to verify $\iota^*h$ satisfies the optimal $L^2$-estimate condition. Fixed $y\in Y$, since $y\in X$ and $h$ is locally Nakano semi-positive,
	we can find a neighborhood $U$ of $y$ in $X$, such that $E|_{U}$ is trivial and for every Stein neighborhood $\Omega\subset U$ of $y$,  $h$ satisfies the optimal $L^2$-estimate condition on $\Omega$.
	With loss of generality, we may assume $U$ is a unit polydisc $\triangle^n$ with center at $y$,
	and $U\cap Y=\{x\in U, z_n=0\}$ be the $n-1$ dimensional unit polydisc with center at $y$.
	It is obvious that $\iota^*h|_{U\cap Y}$ is trivial. We need to prove for every Stein open neighborhood $\Omega$ around $y$ which satisfies $\Omega=(\Omega\cap Y)\times \{|z_n|<1\}$, $\iota^*h$ satisfies the optimal $L^2$-estimate condition on $\Omega\cap Y$. By similarly argument we have used in the Theorem \ref{main1}, we may assume that $e^{-\varphi}\in L_\loc^1(\Omega)$.

  Choose a  K\"ahler form $\omega$ on $\Omega$, then $\iota^*\omega$ is a K\"ahler form on $\Omega\cap Y$, such that $\omega=\iota^*\omega+\im dz_n\wedge d\overline z_n$. And choose a smooth strictly plurisubharmonic function $\psi$ on $\Omega\cap Y$, and $\dpa$-closed $f\in L^2_{(n-1,1)}(\Omega\cap Y,\iota^*w,\iota^*E|_{\Omega\cap Y},\iota^*h e^{-\psi})$. Our goal is to find a $u\in L^2_{(n-1,0)}(\Omega\cap Y,\iota^*w,\iota^*E|_{\Omega\cap Y},\iota^*h e^{-\psi})$, such that $\dpa u=f$ on $\Omega\cap Y$ and
 $$
    \int_{\Omega\cap Y} |u|^2_{\omega\cap Y,\iota^*h} e^{-\psi} dV_{\iota^*\omega}
    \le \int_{\Omega\cap Y} \langle B^{-1}_{\iota^*\omega,\psi} f,f \rangle_{\iota^*\omega,\iota^*h} e^{-\psi} dV_{\iota^*\omega},
   $$
    provided that the right hand is finite, where $B_{\iota^*\omega,\psi}=[\im\partial\overline{\partial}\psi \otimes {\rm Id}_E, \Lambda_{\iota^*\omega}]$.

Since we can take a Stein exhaustion of $\Omega$ and regularize $f$ by convolutions, we may assume that  $e^{-\varphi}$ is integrable in a neighborhood of $\Omega$ and the $\dpa$-closed form $f$ is smooth in a neighborhood of $\Omega\cap Y$.

	We define $\tilde{f}=\pi^*f\wedge dz_n$, $\Psi=\pi^*\psi$ where $\pi$ is the standard projection from $\Omega$ to $\Omega\cap Y$,
	then it is easy to see $\dpa\tilde{f}=0$ on $\Omega_{\epsilon}=(\Omega\cap Y)\times \{|z_n|<\epsilon\}$ and $\tilde f\in C^{\infty}_{(n,1)} (\Omega_{\epsilon},w,E|_{\Omega_\epsilon},h e^{-\Psi})$.
By Lemma \ref{modifition}, there exists a $v_{\epsilon}\in L^2_{n,0}(\Omega_{\epsilon},w,E|_{\Omega_{\epsilon}},h e^{-\Psi})$, such that $\dpa v_{\epsilon}=\tilde f$ and
     \begin{align}
    \int_{\Omega_{\epsilon}} |v_{\epsilon}|^2_{\omega,h} e^{-\Psi} dV_\omega
    \le \int_{\Omega_{\epsilon}} \langle B^{-1}_{\omega,\Psi}\tilde f,\tilde f \rangle_{\omega,h} e^{-\Psi} dV_{\omega},
 \end{align}
     provided that the right hand is finite, where $B_{\omega,\Psi}=[\im\partial\overline{\partial}\Psi \otimes {\rm Id}_E, \Lambda_{\omega}]$.

     Additionally, by the weak regularity of $\dpa$ on $(n,0)$-forms, we can take $\tilde u$ to be smooth.
 Then for some $\epsilon$ small enough, we have
 $\int_{\Omega_{\epsilon}} \langle B^{-1}_{\omega,\Psi}\tilde f,\tilde f \rangle_{\omega,h} e^{-\Psi} dV_{\omega}<+\infty$.
And by Lemma \ref{lem 2},
 \begin{equation}\label{bbb}
  \limsup_{\epsilon\to 0}\frac{1}{\pi \epsilon^2}\int_{\Omega_{\epsilon}} \langle B^{-1}_{\omega,\Psi}\tilde f,\tilde f \rangle_{\omega,h} e^{-\Psi} dV_{\omega}\le\int_{\Omega\cap Y }\langle B^{-1}_{\iota^*\omega,\psi} f,f \rangle_{\iota^*\omega,\iota^*h} e^{-\psi} dV_{\iota^*\omega}.
 \end{equation}

Using the Fubini-Tonelli theorem, we know for every $\epsilon>0$, there exists a $\xi\in \mathbb C$, $|\xi|<\epsilon$,
and $$\int_{\Omega\cap\{z_n=\xi\} }|v_{\epsilon}(z',\xi)|^2_{\omega,h}e^{-\Psi}dV_{\omega}\le\frac{1}{\pi\epsilon^2}\int_{\Omega_{\epsilon}} |v_{\epsilon}|^2_{\omega,h} e^{-\Psi} dV_\omega.$$
So we choose a sequence $\epsilon_k\to 0$, for every $k$, there exists a $\xi_k$, $|\xi_k|<\epsilon_k$, and  $$\int_{\Omega\cap\{z_n=\xi_k\} }|v_{\epsilon_k}(z',\xi_k)|^2_{\omega,h}e^{-\Psi}dV_{\omega}\le\frac{1}{\pi\epsilon_k^2}\int_{\Omega_{\epsilon_k}} |v_{\epsilon_k}|^2_{\omega,h} e^{-\Psi} dV_\omega.$$
Let $u_{\epsilon}=v_{\epsilon}/dz_n$, then we have $\dpa u_{\epsilon}=\pi^* f$. We denote $\iota_k$
be the inclusion map from $\{z_n=\xi_k\}\cap\Omega$ to $\Omega$. We denote $u_k=\iota_k^*u_{\epsilon_k}$, $h_k=\iota_k^*h$, $\omega_k=\iota_k^*\omega$.
Then we have $\dpa u_k=f$ and
\begin{equation}\label{aaa}
  \int_{\Omega\cap Y}|u_k|^2_{\omega_k,h_k}e^{-\psi}dV_{\iota^*\omega}\le \frac{1}{\pi\epsilon_k^2}\int_{\Omega_{\epsilon_k}} |v_{\epsilon_k}|^2_{\omega,h} e^{-\Psi} dV_\omega.
\end{equation}
Noticed that $\dpa(u_k-u_1)=0$, so $\{u_k-u_1\}$ is a sequence of holomorphic section over $\Omega\cap Y$.
And we know $\int_{\Omega\cap Y}|u_k-u_1|^2dV_{\iota^*\omega}$ is locally uniformly bounded since $h$ is locally bounded above.  Therefore $\{u_k-u_1\}$ is a normal family and there exists a subsequence (for simplicity, we still denote by $\{u_k-u_1\}$) converging to a holomorphic section $u-u_1$.
Since $\dpa u=\dpa (u-u_1)+\dpa u_1=f$,  so we only need to prove that
$$\int_{\Omega\cap Y}|u|^2_{\iota^*\omega,\iota^*h}e^{-\psi}dV_{\iota^*\omega}\leq\int_{\Omega\cap Y }\langle B^{-1}_{\iota^*\omega,\psi} f,f \rangle_{\iota^*\omega,\iota^*h} e^{-\psi} dV_{\iota^*\omega}.$$

Since $h$ is Griffith semi-positive, it follows from Lemma \ref{prop lowerbound} that there exists a family of smooth Griffiths positive metrics $h^j$  such that $h^j$ increasingly converges to $h$ on $\Omega$. We denote $h^j_k=\iota^*_kh^j$, then $h^j_k$ is smooth and increasingly converges to $h_k $ and $\lim\limits_{k\to\infty}h^j_k=\iota^*h^j$.
\begin{align*}
    \int_{\Omega\cap Y}|u|^2_{\iota^*\omega,\iota^*h}e^{-\psi}dV_{\iota^*\omega}
    &=\lim_{j\to\infty}\int_{\Omega\cap Y}|u|^2_{\iota^*\omega,\iota^*h^j}e^{-\psi}dV_{\iota^*\omega}\\
    &=\lim_{j\to\infty}\int_{\Omega\cap Y}\liminf_{k\to\infty}|u_k|^2_{\iota_k^*\omega,h^j_k}e^{-\psi}dV_{\iota^*\omega}\\
    &\le\lim_{j\to\infty}\liminf_{k\to\infty}\int_{\Omega\cap Y}|u_k|^2_{\iota_k^*\omega,h^j_k}e^{-\psi}dV_{\iota^*\omega}\\
    &\le\liminf_{k\to\infty}\int_{\Omega\cap Y}|u_k|^2_{\omega_k,h_k}e^{-\psi}dV_{\iota^*\omega}\\
    &\le \liminf_{k\to\infty}\frac{1}{\pi\epsilon_k^2}\int_{\Omega_{\epsilon_k}} |v_{\epsilon_k}|^2_{\omega,h} e^{-\Psi} dV_\omega\\
    &\le\int_{\Omega\cap Y }\langle B^{-1}_{\iota^*\omega,\psi} f,f \rangle_{\iota^*\omega,\iota^*h} e^{-\psi} dV_{\iota^*\omega},
    \end{align*}
    where the first inequality is due to Fatou's lemma, the second inequality is owing to $h_k^j\le h_k$,
    the third inequality comes from $(\ref{aaa})$ and the last inequality is from $(\ref{bbb})$.
    We complete the proof.
\end{proof}
\begin{rem}
  From the proof, one can see that the open subset $U_Y$ of $Y$ such that $h|_{U_Y}$ is Nakano semi-positive  depends only on the  open subset $U_X$ of $X$ such that $h|_{U_X}$ is Nakano semi-positive and the relative positions of $Y$ and $X$ as well as the singularity set of $-\log\det h$.
\end{rem}

\subsection{Properties for multiplier submodule sheaves}
In this section, we discuss some further properties of  the multiplier submodule sheaves which are analogous to those for the multiplier ideal sheaves in \cite{DEL} and \cite{GZ20}.  In \cite{In20}, Inayama proved the coherence of multiplier submodule sheaf  $\mathcal{E}(h)$ for a (locally) Nakano semi-positive singular Hermitian metric $h$.
\begin{pro}[\cite{In20}]\label{pro coherent}
		If $h$ is  (locally) Nakano semi-positive, then $\mathcal{E}(h)$ is a coherent subsheaf of $\mathcal{O}(E)$. Moreover, if $\Omega$ is a bounded Stein open subset of $X$ such that $E|_{\Omega}=\Omega\times\C^r$, and $h|_\Omega$ is Nakano semi-positive, then the sheaf  $\mathcal{E}(h)$ is generated by any basis of the Hilbert space $\mathcal{H}^2(\Omega,h)$ of holomorphic sections $f\in \cal O(E)(\Omega)$ such that $\int_{\Omega}|f|^2_hdV<+\infty$.
	\end{pro}
	
As a direct corollary of Theorem \ref{main1}, we obtain that
\begin{cor}[Restriction Formula]\label{cor res for}
   If $(E,h)$ is a locally Nakano semi-positive metric on $X$, then for any complex submanifold $Y\subset X$ such that $\det h|_Y$ is finite almost everywhere on $Y$, $\mathcal{E}(h|_Y)\subset \mathcal{E}(h)|_Y$.
   \end{cor}

   \begin{pro}[Addition Formula]
   Let $X_1,X_2$ be complex manifolds, $\pi_i:X_1\times X_2\to X_i$, $i=1,2$ be the projections and  $E_i$ be  a holomorphic vector bundle on $X_i$ of rank $r_i$ with a singular Hermitian metric $h_i$ respectively, $i=1, 2$. Assume  that $(E_1\hat\otimes E_2, \pi_1^{*}h_1\otimes \pi_2^{*}h_2)$ is (locally) Nakano semi-positive  on $X_1\times X_2$, then both $(E_i, h_i)$ are locally Nakano semi-positive and
    $$\mathcal E(\pi_1^{*}h_1\otimes \pi_2^{*}h_2)=\pi_1^{*}\mathcal{E}({h_1})\hat \otimes\pi_2^{*}\mathcal{E}(h_2).$$
   \end{pro}
\begin{proof}
Since  $(E_1\hat\otimes E_2, \pi_1^{*}h_1\otimes \pi_2^{*}h_2)$ is (locally) Nakano semi-positive, by Proposition \ref{pro nak res nak},
we can choose a relatively compact Stein open subsets $U_2\subset X_2$ such that  $E_2|_{U_2}$ is trivial, $x_2\in U_2$ such that $(E_1\hat\otimes E_2|_{X_1\times\{x_2\}}, \pi_1^{*}h_1\otimes \pi_2^*h_2)$ is locally Nakano semi-positive.
Let $\{e_j\}$ be an orthonormal frame of  the trivial vector bundle $E_2|_{U_2}$ such that  $h_2(x_2)=I_{r_2}$. Then for any $x_1\in X_1$, there exists a Stein coordinate $\Omega$ such that $E_1|_\Omega$ is trivial and for any $U_1\subset\Omega$, any K\"ahler form $\omega_1$, smooth strictly plurisubharmonic function $\Psi$ and $\dpa$-closed $f\in C^\infty_{(n,1)}(U_1, E_1, \omega_1, h_1e^{-\Psi})$, we can solve the equation
$$\dpa u=f\otimes e_1$$
on $U_1\times \{x_2\}$ with an estimate
$$\int_{U_1\times \{x_2\}}^{}|u|^2_{\omega_1,\pi_1^{*}h_1\otimes I_{r_2}}e^{-\Psi}dV_{\omega_1}\le\int_{U_1\times  \{x_2\}}^{}\langle B^{-1}_\Psi f\otimes e_1,f\otimes e_1\rangle _{\omega_1,\pi_1^{*}h_1\otimes I_{r_2}}e^{-\Psi}dV_{\omega_1}.$$

Since $u=\sum_{j=1}^{r_2}u_j\otimes e_j$ and $\dpa u=\sum_{j=1}^{r_2}\dpa u_j\otimes e_j$, we obtain that $\dpa u_1=f$ and $\dpa u_j=0$ for $j\ge 2$. Then we can view $u_1$ as an $(n,0)$-form on $E_1|_{U_1}$ by identifying $U_1\times \{x_2\}$ with $U_1$. Then $$\dpa u_1=f$$
with an estimate
\begin{align*}
  \int_{U_1}^{}|u_1|^2_{\omega_1,h_1}e^{-\Psi}dV_{\omega_1} &\le \int_{U_1\times \{x_2\}}^{}|u|^2_{\omega_1,\pi_1^{*}h_1\otimes I_{r_2}}e^{-\Psi}dV_{\omega_1} \\
   &\le  \int_{U_1\times  \{x_2\}}^{}\langle B^{-1}_\Psi f\otimes e_1,f\otimes e_1\rangle _{\omega_1,\pi_1^{*}h_1\otimes I_{r_2}}e^{-\Psi}dV_{\omega_1}\\
   &= \int_{U_1}^{}\langle B^{-1}_\Psi f,f\rangle _{\omega_1,h_1}e^{-\Psi}dV_{\omega_1}.
\end{align*}
Therefore, $(E_1, h_1)$ is locally Nakano semi-positive and so is $(E_2, h_2)$.

 Now we choose two relatively compact Stein open subsets $U_1\subset X_1, U_2\subset X_2$ such that $E_1|_{U_1} $ and $E_2|_{U_2}$ is trivial and $h_1, h_2, \pi_1^{*}h_1\otimes\pi^*h_2$ are Nakano semi-positive respectively. Then $\mathcal{H}^2(U_1\times U_2, \pi_1^{*}h_1\otimes \pi_2^{*}h_2)$ is the Hilbert tensor product of $\mathcal{H}^2(U_1,h_1)$ and $\mathcal{H}^2(U_2,h_2)$, and admits $\{f_k\boxtimes g_k\}$ as a Hilbert basis, where $\{f_k\}$ and $\{g_k\}$ are respective Hilbert bases. By Proposition \ref{pro coherent}, $\mathcal{E}(\pi_1^{*}h_1\otimes \pi_2^{*}h_2)|_{U_1\times U_2}$ is generated as an $\mathcal{O}_{U_1\times U_2}$
 module by the $\{f_k\boxtimes g_k\}$. we obtain $$\mathcal E(\pi_1^{*}h_1\otimes \pi_2^{*}h_2)=\pi_1^{*}\mathcal{E}({h_1})\hat \otimes\pi_2^{*}\mathcal{E}(h_2).$$
\end{proof}

   \begin{cor}[Subadditivity Fomula]
    Let $X$ be a complex manifold, $\pi:X\times X\to X$ be the projection and  $E_i, i=1,2$ be  holomorphic vector bundles on $X$ of rank $r_i$ with a singular Hermitian metric $h_i$ respectively. Assume that $(E_1\hat \otimes E_2, \pi^{*}h_1\otimes \pi^{*}h_2)$ is Nakano semi-positive  on $X\times X$, then
    $$\mathcal E(h_1\otimes h_2)\subset\mathcal{E}(h_1)\hat \otimes\mathcal{E}(h_2).$$
   \end{cor}
   \begin{proof}
       We apply  the addition formula to $X_1=X_2=X$ and the restriction formula to the  diagonal $Y$ of $X\times X$.
       Then
       \begin{align*}
           \mathcal{E}(h_1\otimes h_2)&= \mathcal{E}(\pi_1^{*}h_1\otimes \pi_2^{*}h_2|_Y)\\
           &\subset \mathcal{E}(\pi_1^{*}h_1\otimes \pi_2^{*}h_2)|_Y\\
           &=\pi_1^{*}\mathcal{E}({h_1})\hat \otimes\pi_2^{*}\mathcal{E}(h_2)|_Y\\
           &=\mathcal{E}(h_1)\hat \otimes\mathcal{E}(h_2).
       \end{align*}
   \end{proof}

   \begin{cor}[Pull Back Formula]
	Let $f : X\to Y$ be a holomorphic map, and $(E,h)$ be a holomorphic vector bundle on $Y$ where $h$ is a singular Hermitian metric, $\pi_Y: X\times Y\to Y$ and $\pi_X : X\times Y\to X$ be  the standard projections.
	Assume that $(\pi_Y^*E, \pi_Y^{*}h)$ is locally Nakano semi-positive on $X\times Y$.
	Then  $(E,h)$ is locally Nakano semi-positive on $Y$ and $(f^*E,f^*h)$ is locally Nakano semi-positive on $X$.
    In addition, $\mathcal{E}(f^*h)\subset f^*\mathcal{E}(h)$.
\end{cor}
\begin{proof}
	Let $i_Y: Y\to X\times Y$ be the inclusion map, then $\pi_Y\circ i_Y={\rm id}_Y$. Therefore $h={\rm id}_Y^*h=(\pi_Y \circ i_Y)^*h=i_Y^*(\pi_Y^*h)$, and hence $h$ is locally Nakano semi-positive by Proposition \ref{pro nak res nak}.

    Let
    $$\Gamma_f=\{(x,f(x));x\in X\}\subset X\times Y$$
    be the graph of $f$, $i_f$ be the inclusion map from $\Gamma_f$ to $X\times Y$ . Noticing that $\pi_X|_{\Gamma_f}$ is a biholomorphism from $\Gamma_f$ to $X$, and $f=\pi_Y\circ i_f\circ \pi_X|_{\Gamma_f}^{-1}$, we conclude that  $$f^*h=(\pi_Y\circ i_f\circ \pi_X|_{\Gamma_f}^{-1})^*h=(\pi_X|_{\Gamma_f}^{-1})^*i_f^*(\pi_Y^*h)$$ is locally  Nakano semi-positive by Proposition \ref{pro nak res nak}. Moreover, using Corollary \ref{cor res for}, we get
\begin{align*}
\mathcal{E}(f^*h)&=\mathcal{E}((\pi_Y\circ i_f\circ \pi_X|_{\Gamma_f}^{-1})^*h)\\
&=\mathcal{E}((\pi_X|_{\Gamma_f}^{-1})^*i_f^*\pi_Y^*h)\\
&=(\pi_X|_{\Gamma_f}^{-1})^*\mathcal{E}(i_f^*\pi_Y^*h)\\
&\subset (\pi_X|_{\Gamma_f}^{-1})^*i_f^*\mathcal{E}(\pi_Y^*h)\\
&=(\pi_X|_{\Gamma_f}^{-1})^*i_f^*\pi_Y^*\mathcal{E}(h)\\
&=(\pi_Y\circ i_f\circ \pi_X|_{\Gamma_f}^{-1})^*\mathcal{E}(h)\\
&=f^*\mathcal{E}(h),
\end{align*}
where the equality in the fifth line holds is due to  $\mathcal{E}(\pi_Y^*h)=\pi_Y^*\mathcal{E}(h)$. For completeness, we give the proof in the following:

It  is obvious that $\pi_Y^*\mathcal{E}(h)\subset\mathcal{E}(\pi_Y^*h)$. Conversely,
for a point $p\in X\times Y$, assume $f_p\in\mathcal{E}(\pi^*_Y h)_p$,  by definition, there is  a neighborhood $U$ of $p$,  such that $\int_U|f|^2_h <+\infty$. We choose  a local chart $(z,w)$ near on $U$, and we may assume $p$ is the original point $o$ and $U$ is the product of two polydiscs $\Delta_x\times \Delta_y$ centered at $o$.
Since $f\in \Gamma(\Delta_x\times \Delta_y, \pi_Y^*E)$, we can write $$f(z,w)=\Sigma_{I}u_I(w)z^I,$$
where $u_I\in \Gamma(\Delta_y, E)$.  Therefore,
\begin{align*}
 &\4l \int_{\Delta_x\times \Delta_y}|f(z,w)|^2_{\pi_Y^*h} dV_z dV_w\\
 &=\int_{\Delta_x\times \Delta_y}|\Sigma_{I}u_I(w)z^I|^2_{\pi_Y^*h} dV_z dV_w\\
 &=\int_{\Delta_y}\int_{\Delta_x}|\Sigma_{I}u_I(w)z^I|^2_{\pi_Y^*h}dV_z dV_w\\
 &=\int_{\Delta_y}\Sigma_I C_I|u_I(w)|^2_hdV_w<+\infty .
\end{align*}
We get $u_I\in \mathcal{E}(h)$ for each $I$, and then $f\in \pi_Y^*\mathcal{E}(h)$ since $\pi_Y^*\mathcal{E}(h)$ is coherent. Thus we obtain  that $\mathcal{E}(\pi_Y^*h)\subset \pi_Y^*\mathcal{E}(h)$.
\end{proof}

\subsection{Strong openness property of  multiplier submodule sheaves}
In this section, we prove the strong openness property of  multiplier submodule sheaves by using the $L^2$ extension theorem movably  which is invented by Guan-Zhou in \cite{GZ15}.

  Now we deduce strong openness of multiplier submodule sheaves following Lempert's language.
Let $U\subset \C^n$ be open and $E$ is a holomorphic vector bundle over $U$.
Let $h_1\ge h_2\ge\cdots$ be locally Nakano semi-positive on $E$ and assume that $h=\lim_j h_j$ is locally bounded below by a continuous Hermitian metric.
If $H\subset \C_n$ be a complex hyperplane, $W\subset H\cap U$ is relatively open and $f$ is a measurable section of $E|_W$, Lempert in \cite{Lempert17} defined
\begin{align*}
\|f\|^2=\inf_j \int_W |f|^2_{h_j}\in [0,+\infty]
\end{align*}
and proved the following lemma:
\begin{lem}\label{lem lempert}
	The following two statements are equivalent:
\begin{enumerate}
  \item[$(1)$] The germ $F_o\in \cal{O}(E)_o$ belongs to $\bigcup_j \cal{E}(h_j)_o$;
  \item[$(2)$] For any sufficiently small neighborhood
	$V\ni o$ and any hyperplane $H_0:=\{p=0\}\subset \C^n$ there is a zero-measure subset $E\subset \Delta$ such that
	\begin{align*}
	\underset{\Delta\setminus E\ni\xi\rightarrow0}{\liminf}~~ |\xi| \cdot\|F|_{V\cap H_\xi}\|=0,
	\end{align*}
	where $H_\xi:=\{p=\xi\}$ is a hyperplane in $\C^n$.	
\end{enumerate}
\end{lem}	

\begin{thm}[Strong openness]\label{soc}
  Let  $E$ be  a holomorphic vector bundle on $U$, and $h_1\geq h_2\geq\cdots$  be Nakano semi-positive singular Hermitian metrics on $E$. Suppose that $h:=\lim_j h_j$ is bounded below by a continuous Hermitian metric, then $\bigcup_j\cal E (h_j)=\cal E (h)$.
\end{thm}
\begin{proof}
We denote $M=\bigcup_j\cal E (h_j)$, and it is enough to show $M_x=\mathcal{E}(h)_x$ for arbitrary $x$, where we will take $x$ to be the origin $o$. We prove the theorem by induction on $m=\dim X$.
It  is obvious for  the case $m=0$.
Assume the statement is right for $m-1$, then consider the $m$-dimensional case.

To apply the induction hypothesis we have to verify whether the restrictions $E|_{H_\xi}$ to hyperplanes $H_\xi\subset\C^m$ satisfy the hypothesis of the theorem. Owing to Proposition \ref{pro nak res nak}, it is true for ``generic" $H$, which is enough.

Fix a relatively compact neighborhood $V_o\subset U$  and an $f\in\cal O(E)(V_o)$ such that $\int_{V_o} |f|^2_h<+\infty$.
We will show that $ f_o\in M_o$ by using the characterization in Lemma \ref{lem lempert}.
Take a neighborhood $V\Subset V_o$ of $o$ and a hyperplane $H_0$.
Again we assume $H_0=\{z\in\C^m: z_1=0\}$ and for $\xi\in\Delta$ write $H_{\xi}=\{z\in\C^m: z_1=\xi\}$.
The Fubini-Tonelli theorem guarantees that there is  $S\subset\Delta$ of full measure such that
$$\int_{V_o\cap H_{\xi}} |f|^2_h<+\infty\quad
 \text{ for }~ \xi\in S,$$
and
\begin{equation}\label{ccc}
\liminf_{S\ni \xi\to 0} |\xi|^2 \int_{V_o\cap H_{\xi}} |f|^2_h=0.
\end{equation}
The induction hypothesis implies that for each $\xi\in S$ there is a $j_\xi$ such that $\int_{V\cap H_{\xi}} |f|^2_{h_{j_\xi}}<+\infty$,
and by the monotone convergence theorem we have
$$\| f|_{V\cap H_{\xi}}\|^2=\int_{V\cap H_{\xi}} |f|^2_h.$$
 It follows from $(\ref{ccc})$ that
 \begin{equation}
\liminf_{S\ni \xi\to o} |\xi|\cdot\| f|_{V\cap H_{\xi}}\|=0.
\end{equation}
 Then by Lemma \ref{lem lempert}, $f_o\in M_o$ indeed.
\end{proof}

By Lemma~\ref{prop lowerbound}, Proposition~\ref{prop he^} and the strong Noetherian property of coherent analytic sheaves, for a Nakano semi-positive singular Hermitian metric $h$ on a holomorphic vector bundle $E$,
	we can show that the ``upper semicontinuous regularization'' of the sheaf $\cal{E}(h)$:
	$$\mathcal{E}_+(h)=\bigcup_{\beta >0} \mathcal{E}(h(\det h)^\beta)$$
	is a coherent submodule sheaf of $\cal{E}(h)$. Then there is a natural problem: Is $\cal{E}_+(h)$ equal to $\cal{E}(h)$?
	
Of course, Theorem \ref{soc} implies that the answer is positive.
In order to obtain strong openness on holomorphic vector bundles in more general cases, we need the following lemma.

\begin{lem}\label{lem p>1}
    Let $o\in D\subset \mathbb{C}^n$ be a Stein open set, $E$ be a trivial holomorphic vector bundle over $D$ with a  Nakano semi-positive singular Hermitian metric $h$.
    Assume $F$ is a holomorphic section.
    Then the following are equivalent:
    \begin{enumerate}
      \item[(1)] $|F|^2_{h(\det h)^\beta} $ is integrable near $o$ for some $\beta> 0;$
      \item[(2)] $ (|F|^2_h)^p$ is integrable near $o$ for some $p> 1$.
    \end{enumerate}
\end{lem}

\begin{proof}
    $(1)\Rightarrow(2)$:
        since $|F|^2_{\frac{h}{\det h}}$ is plurisubharmonic and hence locally bounded from above, then $$(|F|^2_h)^p=(|F|^2_{\frac{h}{\det h}})^pe^{-p\varphi}\le C^{p-1}|F|^2_{\frac{h}{\det h}}e^{-p\varphi}.$$
        Take $p=\beta+1$ and then $(|F|^2_h)^p$ is integrable near $o$.

        $ (2)\Rightarrow (1) $: Let $U$ be a neighborhood of $o$, by H\"older's inequality,
        $$\int_U |F|^2_{h(\det h)^\beta}=\int_U |F|^2_h e^{-\beta\varphi} \le (\int_U (|F|^2_h)^p)^{1/p} (\int_U e^{-q\beta\varphi})^{1/q},$$
        where $1/p+1/q=1$.
        Taking $\beta $ small enough such that $\nu(q\beta\varphi,o) =\beta \nu(q\varphi,o)< 2$ and shrinking $U$ if necessary, we have $\int_U e^{-q\beta\varphi}<+\infty$ by Lemma~\ref{lem Skoda}.
\end{proof}

	  Together with Lemma \ref{lem e^(phi-phi_j) bounded},  we can obtain the following result.
\begin{cor}\label{main thm2,usual case}
    Let $X$ be a complex manifold, $E$ be a holomorphic vector bundle on $X$ with a  Nakano semi-positive singular Hermitian metric $h$.
    Assume that $h_j$ is  Griffiths semi-positive on $E$. If  $h_j\ge h$ and $-\log \det h_j$ converges to $-\log\det h$ locally in measure, then  $\ \underset{j}{\bigcup} \mathcal{E}(h_j)=\mathcal{E}(h)$.
\end{cor}

\begin{proof}
        Only need to prove that $\mathcal{E}(h)_o \subset \mathcal{E}(h_j)_o$ for large $j$ for any $o \in X$ since $h_j\ge h$.  For any $F \in \mathcal{E}(h)_o$, choose a Stein neighborhood
        $D$ of $o$ such that $E$ is trivial on $D$.

        Let $\varphi=-\log\det h$ and $\varphi_j=-\log\det h_j $ and we may assume $\varphi< 0$ on $D$ by shrinking $D$ and adding a constant.
        Then for a small open subset $o$ $\in$ $U$ of $D$,   $h_j\ge h$ implies $\frac{h_j}{\det h_j}\le\frac{h}{\det h}$ by Lemma~\ref{lem h_j>h}, and we obtain
\begin{align*}
        \int_U |F|^2_{h_j}
        &= \int_U |F|^2_{\frac{h_j}{\det h_j}}e^{-\varphi_j} \\
        &\le \int_U |F|^2_{\frac{h}{\det h}}e^{-\varphi_j}\\
        &=\int_U |F|^2_h e^{\varphi-\varphi_j}\\
        &\le \left(\int_U (|F|^2_h)^p\right)^{1/p}\left(\int_U e^{q\varphi-q\varphi_j}\right)^{1/q},
        \end{align*}
        where $1/p+1/q=1$.

        By Theorem~\ref{main thm1, effective }  there exists $\beta>0$ such that $F\in \mathcal{E}(h(\det h)^\beta)_o$.
        By Lemma~\ref{lem p>1}  we can choose $p> 1$ such that $(|F|^2_h)^p$ is integrable near $o$.
        Since $\varphi_j\le \varphi$ converges to $\varphi$ locally in measure, shrinking $U$ if necessary we can choose $j_0$ such that $e^{q\varphi-q\varphi_j}$ is integrable on $U$ for $j\ge j_0$ by
        Lemma \ref{lem e^(phi-phi_j) bounded}. \\
    \end{proof}

\begin{rem}
    By {\rm Remark~\ref{rmk ge lempert}}, that $\varphi_j$ converges to $\varphi$ locally in measure implies that $\varphi_j$ converges to $\varphi$ in $L^{1}_{\rm loc}$, since $\varphi_j\le \varphi$.
\end{rem}

    It follows from the above theorem that we have the following consequence.
\begin{cor}\label{cor he^{-beta varphi}}
    Let $X$ be a complex manifold, $E$ be a holomorphic vector bundle on $X$ with a  Nakano semi-positive singular Hermitian metric $h$. Let $\psi\in {\rm Psh}(X)$.
    Assume that  $\psi_j\in {\rm Psh}(X)$ such that $\psi_j\le\psi$ and $\psi_j$ converges to $\psi$ locally in measure, then $ \underset{j}{\bigcup} \mathcal{E}(he^{-\psi_j})=\mathcal{E}(he^{-\psi})$.
    In particular, $ \underset{\beta>0}{\bigcup} \mathcal{E}(he^{-\beta\psi})=\mathcal{E}(h)$.
\end{cor}

\subsection{Multiplier ideal associated to $(E,h)$  }
 It is well known that the multiplier ideal sheaf $\mathcal{I}(\varphi)$ associated to a plurisubharmonic function $\varphi$ is integrally closed.  In this section, we give an analogous result for the vector bundle case.
  \begin{defi}[\cite{Cataldo98}]
   To a singular Hermitian metric $h$ on a holomorphic vector bundle $E$, one can associate a multiplier ideal  $\mathcal{I}(h) \subset\cal{O}$ defined as follows:
	$$\mathcal{I}(h)_x :=\{ f_x\in \cal{O}_{x}:f_x\cdot\cal{O}(E)_x \subset \cal{E}(h)_x \}.$$
  \end{defi}
  Obviously, the triangle inequality implies that $\mathcal{I}(h)\otimes \cal O(E)\subset \cal E(h)$. Moreover,
we can see that $\mathcal{I}(h)$ is equal to the annihilator of the quotient sheaf $\mathcal{O}(E)/\mathcal{E}(h)$.

 Recall that for any analytic sheaf $\mathcal{F}$, the annihilator of $\mathcal F$, denoted by  ${\rm Ann}\mathcal{F}$ is an ideal sheaf defined as follows:
 $${\rm Ann} \mathcal{F}_x :=\{f_x\in \mathcal{O}_x : f_x\mathcal{F}_x=0\}.$$
In addition, if $\mathcal{F}$ is coherent, then ${\rm Ann}\mathcal{F}$ is also coherent (see \cite{Grauert2012}).

  Then $\mathcal{I}(h)={\rm Ann}(\mathcal{O}(E)/\mathcal{E}(h))$ is a coherent ideal sheaf if $h$ is a Nakano semi-positive singular Hermitian metric .

\begin{pro}
    Let $E$ be a holomorphic vector bundle on $U$, and $h_1\geq h_2\geq\cdots$  be Nakano semi-positive singular Hermitian metrics on $E$. Suppose that $h=\lim_j h_j$ is bounded below by a continuous Hermitian metric, then
$\bigcup_j\mathcal I (h_j)=\mathcal I(h)$.
\end{pro}
\begin{proof}
    By theorem \ref{soc} and the Strong Noetherian property of coherent sheaves, there exists $j_0$, such that $\mathcal E(h_j)=\mathcal E(h)$ for all $j\geq j_0$, therefore $\mathcal{I}(h_j)=\mathcal{I}(h)$ for all $j\geq j_0$, so  we have $\bigcup_j\mathcal I (h_j)=\mathcal I(h)$.
\end{proof}

\begin{defi}
 Let $A$ be integrally closed domain and $I\subset A$ be an ideal.
the integral closure $\bar{I}$ of $I$ is defined as the set of $f\in A$ satisfying a monic polynomial equation $$f^d+\sum^{d}_{i=1}a_if^{d-i}=0,$$
where $a^i\in I^i$. It is easy to prove $\overline{I}$ is also an ideal of $A$, and we say that $I$ is integrally closed if $I=\bar{I}$.
\end{defi}
  Now consider that $\cal I\subset\mathcal{O}$ is a coherent ideal sheaf on a complex manifold $X$. The integral closure of $\cal I$ is the ideal sheaf $\bar{\cal I}\subset \mathcal O$ defined by taking the integral closure stalkwise. In fact $\bar{\cal I}$ is also a coherent ideal sheaf on $X$, and there is a characterization of integral closure of coherent analytic ideal sheaves proved by Skoda and Brian?on.
 \begin{pro}[\cite{Skoda1974}]\label{integrally lemma}
     Let $\cal I$ be a coherent analytic ideal of $\mathcal{O}$ with local generators $(f_i).$ Then $f\in \mathcal{O}$ belongs to $\bar{\cal I}$ if and only if $|f|= O(\max_i|f_i|).$
 \end{pro}

 \begin{pro}
    If $(E,h)$ is locally Nakano semi-positive singular Hermitian metric, then the  multiplier ideal $\mathcal{I}(h)$ is integrally closed.
\end{pro}
\begin{proof}
    Assume that $(f_i)$ be a set of local generators of $\mathcal{I}(h)_x$, by definition, we have $|f_i e|^2_h\in L^1_{loc}$ near $x$ for any $i$ and every $e_x\in \mathcal{O}(E)_x$ . If $f\in \mathcal{O}_{x}$ is contained in the integrally closure of $\mathcal{I}(h)_x$, then by Proposition \ref{integrally lemma}, $|f|= O(\max_i|f_i|)$. Hence we get $|fe|^2_h\in L^1_{loc}$ near $x$ for every $e_x\in \mathcal{O}(E)_x$,
    i.e, $f\in \mathcal{I}(h)_x$.
\end{proof}

\section{Effectiveness result of strong openness}

	In this section,  we will approach the strong openness property on holomorphic vector bundles in a different way by establishing an effectiveness result (Theorem \ref{main thm1, effective }).
	The idea of the proof of the effectiveness result comes from Guan\cite{Guan19} and Guan-Zhou\cite{GZ15}.  In concrete,
	we will prove an extension theorem (Theorem \ref{main thm 1}) to obtain an estimate (\ref{derivative of G}) of the derivative of the integrals of extremal functions on the open set $\{ \varphi <-t\}$ where $\varphi=-\log\det h$.
	
	We divide the proof of the effectiveness result of strong openness into three steps.
\vskip0.3cm
\noindent {\bf Step One: The property of ${C}_{F,\beta}(U)$.}
\vskip0.5cm

	Roughly, we prove that there is an extremal holomorphic section $F_0\in \mathcal{O}(E)(U)$ such that $ {C}_{F,\beta}(U)=\int_U |F_0|_{\frac{h}{\det h}}$.
	The statement of the following lemma is so complicated because one can not expect $F_0$ is defined on $D$.
\begin{lem}\label{lem keda}
	Let $E$ be a trivial holomorphic vector bundle on $D$.
	Let $U\ni o$ be an open subset of $D$.
	Assume that $F\in \cal{O}(E)(D)$.
    If $\ {C}_{F,\beta}(U)$ appearing in {\rm Definition \ref{def C_F}} is finite, then
    \setlength{\parskip}{0pt}
\begin{enumerate}
	\setlength{\itemsep}{0pt}
	\setlength{\parskip}{0pt}
	
	\item[\rm {(1)}]
	we can choose a minimizing sequence   $\{f_j\}\subset \mathcal{O}(E)(D)$ compactly converging to an $F_0\in \cal{O}(E)(U)$ on $U$ such that $(f_j-F, o)\in \mathcal{E} (h(\det h)^\beta)_o $,
	${C}_{F,\beta}(U)\le\int_{U}|f_j|^2_{\frac{h}{\det h}} \le {C}_{F,\beta}(U)+\frac{1}{j}$ and $\lim_j \int_U |f_j-F_0|^2_{\frac{h}{\det h}}=0$.
	
	\item [\rm {(2)}] Such $F_0$ satisfies that $ (F_0-F, o)\in \mathcal{E} (h(\det h)^\beta)_o$ and $ \int_{U}|F_0|^2_{\frac{h}{\det h}} $ $= {C}_{F,\beta}(U)$.
	Moreover, $F_0\in \mathcal{O}(E)(U)$ is unique in the sense that for any minimizing sequence $\{\widetilde{f}_j\}$ compactly converging to some $\widetilde{F}_0\in \cal{O}(E)(U)$, $\widetilde{F}_0=F_0$.
	
	\item  [\rm {(3)}]
	For any $\hat{F}\in \mathcal{O}(E)(D)$ such that $(\hat{F}-F, o)\in \mathcal{E}(h(\det h)^\beta)_o$ and $\int_{U} |\hat{F}|^2_{\frac{h}{\det h}} <+\infty$, we have
\begin{equation*}
	\int_{U} |F_0|^2_{\frac{h}{\det h}}+
	\int_{U} |\hat{F}-F_0|^2_{\frac{h}{\det h}}=
	\int_{U} |\hat{F}|^2_{\frac{h}{\det h}}.
	\end{equation*}
\end{enumerate}
\end{lem}
\begin{proof}
	
	Existence of $F_0$.
	As $C_{F,\beta}(U)< +\infty$, there exists  $   F_j \in \mathcal{O}(E)(D)$ such that $(F_j-F, o)\in \mathcal{E} (h(\det h)^\beta)_o$ and
	${C}_{F,\beta}(U)\le\int_{U}|F_j|^2_{\frac{h}{\det h}} \le {C}_{F,\beta}(U)+\frac{1}{j}$ for each $j>0$.
	Hence there is a subsequence of $\{F_j\}$ compactly converging to some $F_0 \in \mathcal{O}(E)(U)$ on $U$ and weakly converging to $F_0$ in $\mathcal{H}_\beta(U,o)$ via Lemma \ref{lem int of |f|^2e^phi dominates |f|} and the Banach-Alaoglu-Bourbaki theorem.
	
	Moreover, by Mazur's theorem(Lemma~\ref{Mazur' theorem} below), there exists a sequence $\{f_{v}\}^\infty_{v=1}$ such that $\lim_v\int_{U} |f_{v}-F_0|^2_{\frac{h}{\det h}}=0$ where each $f_{v}$ is a convex combination of
	$\{F_j \}_{j\ge v}$. So $ f_{v} \in \mathcal{O}(E)(D)$, $(f_{v}-F, o)\in \mathcal{E} (h(\det h)^\beta)_o$ and ${C}_{F,\beta}(U)\le\int_{U}|f_{v}|^2_{\frac{h}{\det h}} \le {C}_{F,\beta}(U)+\frac{1}{v}$.
	
	Clearly, $ \int_{U}|F_0|^2_{\frac{h}{\det h}} $ $= {C}_{F,\beta}(U)$.
	By  the closeness of the sections of coherent analytic sheaves under the  topology of compact convergence, we have $ (F_0-F, o)\in \mathcal{E} (h(\det h)^\beta)_o$.
	
	Uniqueness of $F_0$.
	By assumption,  $\frac{f_{j}+\widetilde{f}_{j}}{2}\in  \mathcal{O}(E)(D)$ and $(\frac{f_{j}+\widetilde{f}_{
			j}}{2}-F, o)\in \mathcal{E} (\det h)^\beta)_o$.
	It follows from Definition~\ref{def C_F} that
	$$\int_{U}\left|\frac{f_{j}+\widetilde{f}_{j}}{2}\right|^2_{\frac{h}{\det h}} \ge {C}_{F,\beta}(U).$$
	Let $j\to +\infty$, then
	$$\int_{U}\left|\frac{F_0+\widetilde{F}_0}{2}\right|^2_{\frac{h}{\det h}} \ge {C}_{F,\beta}(U).$$
	Conversely,
	$$\int_{U}\left|\frac{F_0+\widetilde{F}_0}{2}\right|^2_{\frac{h}{\det h}}
	+\int_{U}\left|\frac{F_0-\widetilde{F}_0}{2}\right|^2_{\frac{h}{\det h}}
	=\frac{1}{2}\left(\int_{U}|F_0|^2_{\frac{h}{\det h}}
	+\int_{U}|\widetilde{F}_0|^2_{\frac{h}{\det h}}\right)
	={C}_{F,\beta}(U),$$
	we get  $\int_{U}|\frac{F_0-\widetilde{F}_0}{2}|^2_{\frac{h}{\det h}}=0$ and hence $F_0=\widetilde{F}_0$.
	
	$(3)$ For any $\hat{F}\in \mathcal{O}(E)(D)$ such that $(\hat{F}-F, o)\in \mathcal{E}(h(\det h)^\beta)_o$ and $\int_{U} |\hat{F}|^2_{\frac{h}{\det h}} <+\infty$, we have
	$f_j+\alpha (\hat{F}-f_j)\in \mathcal{O}(E)(D)$ and $(f_j+\alpha (\hat{F}-f_j)-F, o)\in \mathcal{E}(h(\det h)^\beta)_o$ ~for $\alpha\in \R$. Noticing that for $\alpha \in \mathbb{R}$,
\begin{align*}
	C_{F,\beta}(U)
	&\le \int_U |f_j+\alpha (\hat{F}-f_j)|^2_{\frac{h}{\det h}}\\
	&\le C_{F,\beta}(U)+\frac{1}{j}
	+2\alpha \Re\int_U \langle f_j,\hat{F}-f_j\rangle_{\frac{h}{\det h}}
	+\alpha^2\int_U | \hat{F}-f_j|^2_{\frac{h}{\det h}}
	\end{align*}
	and $\lim_j \int_U |f_j-F_0|^2_{\frac{h}{\det h}}=0$, we obtain that  for any $\alpha \in \mathbb{R}$
\begin{align*}
	C_{F,\beta}(U)\le C_{F,\beta}(U)
	+2\alpha \Re\int_U \langle F_0,\hat{F}-F_0\rangle_{\frac{h}{\det h}}
	+\alpha^2\int_U | \hat{F}-F_0|^2_{\frac{h}{\det h}},
	\end{align*}
	which implies that $ \Re\int_U \langle F_0,\hat{F}-F_0\rangle_{\frac{h}{\det h}}=0$.
	Hence
\begin{equation*}
	\int_{U} |F_0|^2_{\frac{h}{\det h}}+
	\int_{U} |\hat{F}-F_0|^2_{\frac{h}{\det h}}=
	\int_{U} |\hat{F}|^2_{\frac{h}{\det h}}.
	\end{equation*}
\end{proof}

\begin{lem}[{\bf Mazur's Theorem}, \cite{Conway2019course}]\label{Mazur' theorem}
    Let $X$ be a normed space. If $\{x_j\}$ weakly converges to $x$, then there is a sequence $\{x'_j \} \subset X$ such that  $ \| x'_j-x\|\rightarrow 0 $, where each $ x'_j $ is a convex combination of $ \{ x_v \} $.
\end{lem}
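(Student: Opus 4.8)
The plan is to deduce the statement from the Hahn--Banach separation theorem, via the classical fact that a convex subset of a normed space has the same closure in the norm topology and in the weak topology.

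First I would recall the separation theorem in the form: if $C\subset X$ is a norm-closed convex set and $x_0\notin C$, then there exist $\ell\in X^*$ and $\alpha\in\mathbb{R}$ with $\Re\,\ell(x_0)>\alpha\ge\Re\,\ell(y)$ for every $y\in C$. Using this, I would show that for an arbitrary convex set $C$, its norm closure $\overline{C}$ equals its weak closure. One inclusion is automatic since the weak topology is coarser than the norm topology. For the other, suppose $x_0\notin\overline{C}$; as $\overline{C}$ is norm-closed and convex, separation provides $\ell\in X^*$ and $\alpha$ with $\Re\,\ell(x_0)>\alpha\ge\Re\,\ell(y)$ on $\overline{C}$, so the set $\{z\in X:\Re\,\ell(z)>\alpha\}$ is a weakly open neighbourhood of $x_0$ disjoint from $C$; hence $x_0$ is not in the weak closure of $C$.

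Next, to produce a sequence with the stronger ``tail'' property actually used in Lemma~\ref{lem keda} (each $f_v$ a convex combination of $\{F_j\}_{j\ge v}$), I would fix $n\ge1$ and set $C_n:=\mathrm{conv}\{x_v:v\ge n\}$, the collection of all finite convex combinations of $\{x_v\}_{v\ge n}$. Since $x_v\to x$ weakly and $x_v\in C_n$ for all $v\ge n$, the point $x$ lies in the weak closure of $C_n$, hence, by the previous paragraph, in the norm closure of $C_n$. Therefore for each $n$ one can choose $x'_n\in C_n$ with $\|x'_n-x\|<1/n$; by construction $x'_n$ is a convex combination of $\{x_v:v\ge n\}$, and $\|x'_n-x\|\to0$. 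In particular this gives the asserted sequence of convex combinations of $\{x_v\}$.

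The only step requiring genuine input is the separation argument showing that norm-closed convex sets are weakly closed; the rest is bookkeeping, and the passage from the convex-hull statement to a sequence (with or without the tail refinement) is routine. Since all of this is entirely classical, in the paper we merely cite it (see \cite{Conway2019course}).
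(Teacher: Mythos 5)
Your proposal is correct: the Hahn--Banach separation argument showing that norm-closed convex sets are weakly closed, applied to the convex hulls $C_n=\mathrm{conv}\{x_v : v\ge n\}$, is the standard proof of Mazur's theorem together with the tail refinement. The paper itself gives no proof of the lemma (it simply cites \cite{Conway2019course}), and its subsequent Remark obtains the tail property exactly as you do --- by applying the statement to the tail sequence $\{x_j\}_{j\ge j_0}$, which still converges weakly to $x$ --- so your argument supplies the classical proof behind the citation and matches the paper's use of it in Lemma~\ref{lem keda}.
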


\begin{rem}
    In fact, we can choose the sequence $\{x'_j\}$ such that $x'_j$ is a convex combination of $\{x'_v\}_{v\ge j}$. Since $\{x_j\}$ also weakly converges to $x$, $\{x_j\}_{j\ge j_0}$ weakly converges to $x$
    for any fixed $j_0\ge 1$. Hence there exists a convex combination $x'_{j_0}$ of $\{x_j\}_{j\ge j_0}$ such that $\| x'_{j_0}-x\|\le 1/j_0$. Then the sequence $\{x'_{j_0}\}_{j_0\ge1}$ is what we expect.
\end{rem}

	The following result is a consequence of Lemma~\ref{lem keda}.
\begin{lem}\label{lem 0}
    Let $o\in U \subset D$ and let $F\in \cal{O}(E)(D)$. Then  $(F, o) \in\mathcal{E} (h(\det h)^\beta)_o $ ~if and only if
    $\ {C}_{F,\beta}(U) = 0$.
\end{lem}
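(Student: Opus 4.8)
The plan is to prove the two implications separately, using the zero section as an explicit competitor for one direction and the minimizer from Lemma~\ref{lem keda}(1) for the other.

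For the ``only if'' direction, assume $(F,o)\in\mathcal{E}(h(\det h)^\beta)_o$. The zero section, trivially extended from $U$ to all of $D$, is then admissible in the infimum defining $C_{F,\beta}(U)$: indeed $0\in\mathcal{O}(E)(D)$, and $(0-F,o)=(-F,o)$ lies in $\mathcal{E}(h(\det h)^\beta)_o$ because the latter is an $\mathcal{O}_o$-submodule of $\mathcal{O}(E)_o$. Since $\int_U|0|^2_{\frac{h}{\det h}}=0$ and the functional being minimized is non-negative, we conclude $C_{F,\beta}(U)=0$.

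For the ``if'' direction, assume $C_{F,\beta}(U)=0$. As $0=C_{F,\beta}(U)<+\infty$, Lemma~\ref{lem keda}(1) yields $F_0\in\mathcal{O}(E)(U)$ with $(F_0-F,o)\in\mathcal{E}(h(\det h)^\beta)_o$ and $\int_U|F_0|^2_{\frac{h}{\det h}}=C_{F,\beta}(U)=0$. It then suffices to show $F_0\equiv0$ on $U$. By Proposition~\ref{pro h/deth} the metric $\frac{h}{\det h}$ is a singular hermitian metric on $E$, so its exceptional sets have measure zero; off this null set $\frac{h}{\det h}$ is a genuine positive-definite hermitian inner product (by Proposition~\ref{prop lowerbound}(1) the eigenvalues of $h$ there lie in $(0,+\infty)$, whence the eigenvalues $\lambda_i/\prod_k\lambda_k$ of $\frac{h}{\det h}$ are positive). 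Therefore $|F_0|^2_{\frac{h}{\det h}}=0$ almost everywhere forces $F_0=0$ almost everywhere on $U$, and since a full-measure subset of $U$ is dense while $F_0$ is a holomorphic section, the identity theorem gives $F_0\equiv0$ on each connected component of $U$, i.e. $F_0\equiv0$. Hence $(-F,o)=(F_0-F,o)\in\mathcal{E}(h(\det h)^\beta)_o$, and using once more the module structure we obtain $(F,o)\in\mathcal{E}(h(\det h)^\beta)_o$.

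The only step with real content is the implication ``$\int_U|F_0|^2_{\frac{h}{\det h}}=0\Rightarrow F_0\equiv0$'', and that reduces to the fact — already encoded in the definition of a singular Finsler/hermitian metric together with Propositions~\ref{prop lowerbound} and \ref{pro h/deth} — that $\frac{h}{\det h}$ is positive definite off a null set. Everything else is bookkeeping with Definition~\ref{def C_F} and the submodule property of $\mathcal{E}(h(\det h)^\beta)$; I do not anticipate any serious obstacle beyond being careful that admissible comparison sections $\widetilde{F}$ are taken on $D$ while the minimizing integral is over $U$, and that Lemma~\ref{lem keda}(1) is applied in exactly the form stated.
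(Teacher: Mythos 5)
Your proof is correct and follows essentially the same route the paper intends: the zero section (trivially extended to $D$) as a competitor gives $C_{F,\beta}(U)=0$ when $(F,o)\in\mathcal{E}(h(\det h)^\beta)_o$, and conversely the minimizer $F_0$ from Lemma~\ref{lem keda}(1) with $\int_U|F_0|^2_{\frac{h}{\det h}}=0$ must vanish identically because $\frac{h}{\det h}$ is positive definite off a null set, so $(F,o)=(F-F_0,o)\in\mathcal{E}(h(\det h)^\beta)_o$. No gaps.
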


    Now we fix a singular Hermitian metric $h$, and let $\varphi=-\log\det h$.
    Let $F$ be a holomorphic section over such open set $D$.
    Consider the volume of the sublevel sets of $\varphi$ with a multiplier
    $$G_\beta(t):=C_{F,\beta} (\{\varphi< -t\}),$$ where $\{\varphi< -t\} $ means $\{\varphi< -t\}\cap D $. Notice that
    $\underset{t> t_0}{\bigcup} \{\varphi< -t\}= \{\varphi< -t_0\}$ for $t_0\in [0,+\infty)$, we obtain the following lemmas:
\begin{lem}\label{lem prop of G}
    Assume $G_\beta(0)< +\infty$, then $G_\beta(t)$ is decreasing with respect to $t\in [0,+\infty)$ and  $\underset{t\rightarrow +\infty}{\lim} G_\beta(t)=0$.
\end{lem}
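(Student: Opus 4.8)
The plan is to establish the two assertions separately. First I would handle the monotonicity: for $0\le t_1<t_2$ one has $\{\varphi<-t_2\}\cap D\subset\{\varphi<-t_1\}\cap D$, and in Definition~\ref{def C_F} the family of admissible sections $\widetilde F$ over which the infimum defining $C_{F,\beta}$ is taken depends only on the condition at $o$, not on the open set appearing in the integral. Since $|\widetilde F|^2_{\frac{h}{\det h}}\ge 0$, enlarging that set can only increase each integral $\int_U|\widetilde F|^2_{\frac{h}{\det h}}$, so $C_{F,\beta}(\{\varphi<-t_2\}\cap D)\le C_{F,\beta}(\{\varphi<-t_1\}\cap D)$; this is exactly Remark~\ref{rem C monotone}(2). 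Hence $G_\beta$ is decreasing on $[0,+\infty)$, and in particular the limit $\lim_{t\to+\infty}G_\beta(t)$ exists and is $\ge 0$.

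For the limit itself, the idea is to exhibit a single competitor that is admissible for $C_{F,\beta}(\{\varphi<-t\}\cap D)$ for \emph{every} $t$ and whose contribution tends to $0$. Set $U_0:=\{\varphi<0\}\cap D$; since $G_\beta(0)=C_{F,\beta}(U_0)<+\infty$, the definition of the infimum provides $g\in\mathcal{O}(E)(D)$ with $(g-F,o)\in\mathcal{E}(h(\det h)^\beta)_o$ and $\int_{U_0}|g|^2_{\frac{h}{\det h}}<+\infty$ (if one prefers, one can take the minimizer furnished by Lemma~\ref{lem keda}). For every $t\ge0$ this same $g$ is admissible for $C_{F,\beta}(\{\varphi<-t\}\cap D)$, and $\{\varphi<-t\}\cap D\subset U_0$, so
$$0\le G_\beta(t)\le\int_{\{\varphi<-t\}\cap D}|g|^2_{\frac{h}{\det h}}\le\int_{U_0}|g|^2_{\frac{h}{\det h}}<+\infty .$$
As $t\to+\infty$ the nested sets $\{\varphi<-t\}\cap D$ decrease to $\{\varphi=-\infty\}\cap D$; because $\varphi=-\log\det h$ is plurisubharmonic (Proposition~\ref{prop lowerbound}) and, by the standing convention, is not identically $-\infty$ on any component of $D$, this limit set is pluripolar, hence of Lebesgue measure zero. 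Since $\frac{h}{\det h}$ is singular Griffiths semi-negative (Proposition~\ref{pro h/deth}), $|g|_{\frac{h}{\det h}}$ is plurisubharmonic and in particular $|g|^2_{\frac{h}{\det h}}$ is measurable; as it lies in $L^1(U_0)$, dominated convergence gives $\int_{\{\varphi<-t\}\cap D}|g|^2_{\frac{h}{\det h}}\to 0$, and the displayed inequality forces $\lim_{t\to+\infty}G_\beta(t)=0$.

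I do not expect any real obstacle; the one point that must be handled with care is the choice of dominating function. It is important to extract $g$ from the finiteness of $G_\beta(0)=C_{F,\beta}(\{\varphi<0\}\cap D)$ rather than from $\int_D|F|^2_{\frac{h}{\det h}}$, which need not be finite, and to note that $\{\varphi<-t\}\cap D$ remains inside $\{\varphi<0\}\cap D$ for all $t\ge0$ so that a single $L^1$ bound serves uniformly. The only nontrivial input is that the nested intersection $\{\varphi=-\infty\}\cap D$ is null, which is where the plurisubharmonicity of $-\log\det h$ is used.
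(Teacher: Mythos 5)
Your argument is correct, and it supplies in the natural way the proof that the paper omits (the paper states Lemma~\ref{lem prop of G} with only the preceding remark as a hint): monotonicity via Remark~\ref{rem C monotone}(2), and the limit by bounding $G_\beta(t)$ with a single admissible competitor $g$ of finite weighted $L^2$ norm, whose integrals over $\{\varphi<-t\}\cap D$ tend to $0$ since these sets decrease to the null set $\{\varphi=-\infty\}\cap D$. Your cautionary points (extracting $g$ from $G_\beta(0)<+\infty$ rather than from $F$, and using that $\varphi=-\log\det h$ is plurisubharmonic and not identically $-\infty$) are exactly the right ones.
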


\begin{lem}\label{lem estimate of G}
    Let $0< G < +\infty$. For $t\in [0, +\infty)$ such that $ G_\beta(t)< G$, we have
\begin{align*}
    \int_{t}^{+\infty} \frac{\liminf\limits_{B\rightarrow 0^+}  {\left(-\frac{G_\beta(s+B)-G_\beta(s)}{B}\right)}}{G-G_\beta(s)}ds
    \le \log \frac{G}{G-G_\beta(t)} .
    \end{align*}
\end{lem}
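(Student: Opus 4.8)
The plan is to recognize the left-hand side as the integral of a logarithmic derivative and then invoke two classical facts from the differentiation theory of monotone functions. Set $\gamma(s):=G-G_\beta(s)$. By the monotonicity in Lemma~\ref{lem prop of G} (only the fact that $G_\beta$ is decreasing is used), $\gamma$ is increasing on $[t,+\infty)$, and the hypothesis $G_\beta(t)<G$ together with $G_\beta\ge 0$ gives $0<G-G_\beta(t)\le\gamma(s)\le G$ there; hence $\gamma(\infty):=\lim_{s\to+\infty}\gamma(s)$ exists and lies in $(0,G]$. The numerator of the integrand is exactly $\liminf_{B\to 0^+}\frac{\gamma(s+B)-\gamma(s)}{B}$, i.e. the lower right Dini derivative $\underline{D}^+\gamma(s)$, so the asserted inequality becomes $\int_t^{+\infty}\frac{\underline{D}^+\gamma(s)}{\gamma(s)}\,ds\le\log\frac{G}{\gamma(t)}$, and since $\gamma(\infty)\le G$ it suffices to prove $\int_t^{+\infty}\frac{\underline{D}^+\gamma(s)}{\gamma(s)}\,ds\le\log\gamma(\infty)-\log\gamma(t)$.

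The first step is the pointwise comparison $\underline{D}^+(\log\gamma)(s)\ge\frac{\underline{D}^+\gamma(s)}{\gamma(s)}$ at every $s\in[t,+\infty)$ at which $\gamma$ is right-continuous. This follows from the elementary inequality $\log x-\log y\ge\frac{x-y}{x}$ (equivalently $\log u\ge 1-\frac1u$) for $x,y>0$: applying it with $x=\gamma(s+B)$, $y=\gamma(s)$ and dividing by $B>0$ gives $\frac{\log\gamma(s+B)-\log\gamma(s)}{B}\ge\frac{1}{\gamma(s+B)}\cdot\frac{\gamma(s+B)-\gamma(s)}{B}$, and since $\gamma(s+B)\to\gamma(s)>0$ as $B\to 0^+$ at a right-continuity point, passing to $\liminf_{B\to 0^+}$ on both sides yields the claim (using that $\liminf(a_Bb_B)=a\liminf b_B$ when $a_B\to a>0$). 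As $\gamma$ is monotone it is right-continuous off a countable, hence Lebesgue-null, set, so this comparison holds almost everywhere on $[t,+\infty)$; all the functions in sight are measurable because Dini derivatives of monotone functions are.

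The second step is the classical estimate $\int_t^{+\infty}\underline{D}^+g(s)\,ds\le g(\infty)-g(t)$ for any bounded increasing function $g$ on $[t,+\infty)$, applied to $g=\log\gamma$ (which is increasing and bounded by the previous paragraph). Indeed, by Lebesgue's theorem on differentiation of monotone functions $g'$ exists a.e., agrees with $\underline{D}^+g$ a.e., is nonnegative and measurable, and $\int_t^T g'\,ds\le g(T)-g(t)$ for every $T>t$; letting $T\to+\infty$ and using monotone convergence gives the estimate. Chaining the two steps, $\int_t^{+\infty}\frac{\underline{D}^+\gamma(s)}{\gamma(s)}\,ds\le\int_t^{+\infty}\underline{D}^+(\log\gamma)(s)\,ds\le\log\gamma(\infty)-\log\gamma(t)\le\log G-\log(G-G_\beta(t))=\log\frac{G}{G-G_\beta(t)}$, which is the assertion.

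I do not anticipate a genuine obstacle here; the argument is a direct application of real-variable calculus. The only points deserving attention are the measurability of the integrand (guaranteed by the monotonicity of $G_\beta$) and the countably many jump points of $\gamma$ — at such a point $\underline{D}^+\gamma(s)=+\infty$, but since these form a null set they are invisible to both integrals, so it is enough to establish the pointwise comparison at points of right-continuity.
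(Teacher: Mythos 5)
Your proof is correct and follows essentially the same route as the paper's: both recognize the integrand a.e.\ as the (lower right) derivative of the bounded increasing function $\log\bigl(G-G_\beta(s)\bigr)$ and bound its integral by the total increase of that function, using Lebesgue's differentiation theorem for monotone functions (the paper inlines via Fatou's lemma exactly the step you package as $\int_t^T g'\le g(T)-g(t)$). The only cosmetic difference is that you get the pointwise comparison as an inequality from $\log u\ge 1-1/u$ at right-continuity points, while the paper gets an equality at differentiability points via $\log(1+u)=u(1+o(1))$; both suffice.
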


\begin{proof}
        Because $G_\beta(s)$ and $\log \frac{1}{G-G_\beta(s)}$ are decreasing on $[t,+\infty )$, they are both differentiable outside a zero-measure subset $E_0$ of $[t,+\infty)$.
        For any $s \in [t,+\infty) \setminus E_0 $, we have
\begin{align*}
        &\4l \underset{B\rightarrow 0^+}{\lim}\ \frac{1}{B} \left(\log \frac{1}{G-G_\beta(s)}-\log \frac{1}{G-G_\beta(s+B)}\right)\\
        &=\underset{B\rightarrow 0^+}{\lim}\ \frac{1}{B} \log\ \left(1+\frac{G_\beta(s)-G_\beta(s+B)}{G-G_\beta(s)}\right)\\
        &=\lim_{B\rightarrow 0^+}\ \frac{1}{B} \frac{G_\beta(s)-G_\beta(s+B)}{G-G_\beta(s)}(1+o(1))\\
        &=\liminf_{B\rightarrow 0^+}\ \frac{1}{B} \frac{G_\beta(s)-G_\beta(s+B)}{G-G_\beta(s)}.
        \end{align*}

        By Fatou's lemma and Lemma \ref{lem prop of G}, we get
\begin{align*}
        &\4l \int_{t}^{+\infty} \frac{\liminf\limits_{B\rightarrow 0^+}\left(-\frac{G_\beta(s+B)-G_\beta(s)}{B}\right)}{G-G_\beta(s)}ds\\
        &=\int_{t}^{+\infty} \underset{B\rightarrow 0^+}{\lim}\ \frac{1}{B} \left(\log \ \frac{1}{G-G_\beta(s)}-\log \ \frac{1}{G-G_\beta(s+B)}\right)ds\\
        &\le \left(\log \frac{1}{G-G_\beta(s)}\right)  \bigg|^t_{+\infty}\\
        &= \log \frac{G}{G-G_\beta(t)}.
        \end{align*}
    \end{proof}

\noindent {\bf Step Two: An extension theorem of openness type }
\vskip0.5cm

	We recall an openness type extension theorem appearing in \cite{GZ2015extension}, \cite{GZ15} and \cite{Guan19}.
\begin{theorem}
	Let $B\in (0,1]$ be arbitrarily given.
	Let $D$ be a pseudoconvex domain in $\C^n$ containing $o$.
	Let $\phi$ be a negative plurisubharmonic function on $D$, such that $\phi(o)=-\infty$.
	Let $F$ be an $L^2$ integrable holomorphic function on
	$\{\phi < - t_0\}$, $t_0 \ge 0$.

	Then there exists a holomorphic function $\tilde{F}$ on $D$,
	such that, $( \tilde{F}-F, o)\in \mathcal{I}(\phi)_o$ and
\begin{align*}
\int_D
|\tilde{F} - (1 - b_{t_0,B}(\phi))F|^2
\le \frac{C(t_0,B)}{B}
\int_{\{-t_0-B<\phi<-t_0\} }  |F|^2e^{-\phi},
\end{align*}
with the coefficient $C(t_0,B)=1 - e^{-(t_0+B)}$,
where $b_{t_0,B}(t) = \int_{-\infty}^t \frac{1}{B} \mathbbm{1}_{\{-t_0-B<s<-t_0\}}ds$.
\end{theorem}

	Roughly, one can reformulate the similar integral on the right hand as
\begin{align*}
	\frac{1}{B}
	\int_{\{-t_0-B\le\phi<-t_0\} }  |F|^2e^{-\phi}
	=
	\frac{
		\int_{\{\phi<-t_0\} }  |F|^2e^{-\phi}-\int_{\{\phi<-t_0-B\} }  |F|^2e^{-\phi}
		}{B}.
\end{align*}
	And let $B\to 0^+$, if the limit exists, then we obtain an estimate of the derivative of $\int_{\{\phi<-t\} }  |F|^2e^{-\phi}$ with respect to $t$.
	The main results in \cite{Guan19} and our main results are all based on such precise estimate of the derivative of similar integrals.

	Imitating the above result, we prove an openness extension theorem (Theorem \ref{main thm 1}) which plays a critical role in the proof of Theorem \ref{thm 1}.
	As you will see, the statement and the proof of Theorem \ref{main thm 1} are different from those of  Guan-Zhou's result.
	Guan-Zhou used more precise calculation on the Chern curvature.
	However, in \cite{Rau15} Raufi showed that the Chern curvature does NOT exist as positive measures for a Nakano semi-positive singular Hermitian metric in general.
	Fortunately, we obtain a weak result directly derived from the optimal $L^2$-estimate which happens to be useful.

\begin{thm}\label{main thm 1}
    Let $o\in$ $D$ $\subset \mathbb{C}^n$ be a Stein open set, $E$ be a trivial holomorphic vector bundle over $D$ with a  Nakano semi-positive singular Hermitian metric h.
    Assume $\varphi=-\log\det h < 0$ and $\varphi(o)=-\infty$.
    Then for given $t_0 > 0,\ B\in (0,1],~ \beta\ge 0$, and $F_{t_0}\in \mathcal{O}(E)(D)$ with  $\int_{\{\varphi< -t_0\}}  |F_{t_0}|^2_{\frac{h}{\det h}} dV < +\infty$,
    there exists $\widetilde{F}=\widetilde{F}_{t_0, B,\beta}\in \mathcal{O}(E)(D)$ such that $ (\widetilde{F}-F_{t_0}, o) \in \mathcal{E}(h(\det h)^\beta)_o$ and
\begin{align}\label{openness extension}
     \int_{D} |\widetilde{F}-(1-b_{t_0,B}(\varphi))F_{t_0}|^2_{h(\det h)^\beta}e^{-\chi_{t_0,B}(\varphi)}
    \le \frac{C(t_0,B,\beta)}{B} \int_{\{-t_0-B<\varphi<-t_0\}}  |F_{t_0}|^2_{\frac{h}{\det h}},
    \end{align}
    with the coefficient $C(t_0,B,\beta)=(t_0+B)e^{1+(1+\beta)(t_0+B)}$,
    where $\ \chi_{t_0,B}(t)=\frac{1}{t_0+B} \int_{0}^{t} b_{t_0,B}(s) ds$, $b_{t_0,B}(t)=\int_{-\infty}^{t}\frac{1}{B}\mathbbm{1}_{\{-t_0-B< s< -t_0 \}} ds$.
\end{thm}

	We  point out a difference that $F$ in Guan-Zhou's result is only defined on $\{\phi<-t\}$, while $F_{t_0}$ in ours is defined on the whole domain $D$ instead of $\{\varphi<-t_0\}$.
	That is why Definition \ref{def C_F}-(2) and Lemma \ref{lem keda} are more complicated than those of Guan-Zhou.
	We will explain the reason in the proof.

	One of the main points, not mentioned in the proof, is the choice of $\chi_{t_0,B}(t)$.
	We tried such functions of type $\ \chi_{t_0,B,\alpha}(t)=\alpha \int_{0}^{t} b_{t_0,B}(s) ds$, where $\alpha$ is a fixed positive constant.
	Theorem \ref{main thm 1} is also valid for $\chi_{t_0,B,\alpha}(t)$ with the coefficient $$C(t_0,B,\beta,\alpha)=\frac{1}{\alpha }e^{\alpha(t_0+B)+(1+\beta)(t_0+B)}.$$
	However, by computation, if we denote the function $\theta(\beta)$ appearing in Theorem \ref{main thm1, effective } associated with $\chi_{t_0,B,\alpha}(t)$ by $\theta_\alpha(\beta)$,
	then we have
	$$\lim_{\beta\to 0} \theta_\alpha(\beta)
	=1+\int_0^\infty \big(1-\exp(-\frac{\alpha}{1+\alpha}e^{-(1+\alpha)t})\big)e^tdt
	<+\infty$$
	for each fixed $\alpha>0$, which can not imply the strong openness property.
	In fact, the value $\alpha =(t_0+B)^{-1}$ exactly minimizes the coefficient $C(t_0,B,\beta,\alpha)$ for fixed $t_0,B,\beta$.

\begin{proof}
    	The proof is divided into three parts.
    	\vskip0.3cm
    	\noindent{\bf Part one: a modified version by using the optimal $L^2$ estimate.}
    	\vskip0.5cm
        Since $D$ is a Stein open set, we can take a sequence $\{D_j\}$ of Stein open subsets such that $D_j\Subset D_{j+1}$ and $\bigcup_j D_j =D$.
        For any $0< \varepsilon < \frac{1}{8}B$, let $0\le \mathbbm{1}_\varepsilon (s)\le 1$ be a smooth function on $\mathbb{R}$ such that
        $\Supp\mathbbm{1}_\varepsilon \subset [-t_0-B+\varepsilon, -t_0-\varepsilon]$ and $\mathbbm{1}_\varepsilon =1$ on $[-t_0-B+2\varepsilon, -t_0-2\varepsilon]$, then $\mathbbm{1}_\varepsilon$ converges to
        $\mathbbm{1}_{t_0,B}:=\mathbbm{1}_{\{-t_0-B< s< -t_0\}}$.

        Let $$b_{\varepsilon}(t)=\int_{-\infty}^{t}\frac{1}{B}\mathbbm{1}_\varepsilon (s)ds,~~~ \chi_{\varepsilon}(t)=\frac{1}{t_0+B} \int_{0}^{t} b_{\varepsilon}(s) ds.$$
        Then  $\chi_\varepsilon$ is a smooth convex increasing function on $\mathbb{R}$ and for $t\le 0$,
\begin{align}\label{ineq chi varepsilon}
        -1 \le \frac{1}{t_0+B} \max\{t,-t_0-B\}\le \chi_\varepsilon(t) \le \frac{1}{t_0+B} \max\{t,-t_0\} \le 0.
        \end{align}

        Let $0\le \rho\in C^\infty_0(\mathbb{C}^n) $ be a radial function such that $\int_{\mathbb{C}^n} \rho=1$  and  $\Supp \rho \subset \{|z|<1 \}$.
        Then $$\varphi_\delta(z)=\int_{\{|\xi|< 1\}} \varphi(z-\delta\xi)\rho(\xi)dV(\xi)$$
        is a well-defined smooth non-positive plurisubharmonic function on $D_j$ when $0< \delta < d(D_j, D^c)$.
        In addition, $\varphi_\delta$ decreasingly converges to $\varphi$ as $\delta \searrow 0$ on $D_j$.

        Let $\eta=dz_1\wedge dz_2\wedge\dots\wedge dz_n $ and $\omega= \sum \im dz_j\wedge d\overline{z}_j$, then $\overline{\partial}(1-b_\varepsilon(\varphi_\delta))\wedge F_{t_0}\eta$ is a smooth
        $\overline{\partial}$-closed $(n,1)$ form on $D_j$ with  $$\Supp\{b'_\varepsilon(\varphi_\delta) \} \subset \{-t_0-B+\varepsilon \le \varphi_\delta \le -t_0-\varepsilon\} \subset \{\varphi< -t_0\}.$$

        Since $h$ is  Nakano semi-positive on $D$ and so is $h(\det h)^\beta=he^{-\beta\varphi}$ by Proposition \ref{prop he^}, then there exists $(n,0)$-form $u_{\varepsilon,\delta,j}$ on $D_j$ such that
        $$\overline{\partial} u_{\varepsilon,\delta,j} = -\overline{\partial}(1-b_\varepsilon(\varphi_\delta))\wedge F_{t_0}\eta$$
        and
\begin{align}
        & \int_{D_j} |u_{\varepsilon,\delta,j}|^2_{h(\det h)^\beta}e^{-\chi_\varepsilon(\varphi_\delta)-\varepsilon |z|^2} \nonumber\\
        \le &\int_{D_j} \langle B^{-1}_{\varepsilon,\delta}(\overline{\partial}(1-b_\varepsilon(\varphi_\delta))\wedge F_{t_0}\eta), \overline{\partial}
        (1-b_\varepsilon(\varphi_\delta))\wedge F_{t_0}\eta\rangle_{\omega,h(\det h)^\beta}e^{-\chi_\varepsilon(\varphi_\delta)-\varepsilon|z|^2} \label{ineq main1 },
        \end{align}
        where $B_{\varepsilon,\delta}=B_{\omega,\chi_\varepsilon(\varphi_\delta)+\varepsilon|z|^2}$.
		Set $F_{\varepsilon,\delta,j}\eta:=u_{\varepsilon,\delta,j}+(1-b_\varepsilon (\varphi_\delta))F_{t_0}\eta$ on $D_j$, and then $F_{\varepsilon,\delta,j}$ is a modified openness extension.
		
        \vskip0.3cm
        \noindent{\bf Part two: Finiteness of the RHS of (\ref{ineq main1 }).}
        \vskip0.5cm

        We need to prove the integral (\ref{ineq main1 }) is finite.
        Note that
\begin{align*}
         B_{\varepsilon,\delta}
        &=B_{\omega,\chi_\varepsilon(\varphi_\delta)+\varepsilon|z|^2}\\
        &=[(\im\chi'_\varepsilon\partial\overline{\partial}\varphi_\delta
        +\im\chi''_\varepsilon\partial\varphi_\delta  \wedge \overline{\partial}\varphi_\delta
        +\varepsilon \im \partial\overline{\partial}|z|^2)\otimes {\rm Id}_E, \Lambda_\omega]\\
        &\ge[(\im \chi''_\varepsilon\partial\varphi_\delta  \wedge \overline{\partial}\varphi_\delta)\otimes {\rm Id}_E, \Lambda_\omega]\\
        &=\chi''_\varepsilon (\overline{\partial}\varphi_\delta)(\overline{\partial}\varphi_\delta)^* \otimes {\rm Id}_E
        \end{align*}
        is an operator on $(n,1)$-forms, where $(\overline{\partial}\varphi_\delta)^*$ is the adjoint operator of $(\overline{\partial}\varphi_\delta)\wedge\cdot$. For any $(n,0)$-form $u$ and $(n,1)$-form $v$,
        on $\Supp\chi''_\varepsilon$ we have
\begin{align*}
        |\langle\overline{\partial}\varphi_\delta \wedge u, v\rangle|^2
        &=|\langle u, (\overline{\partial}\varphi_\delta)^* v\rangle|^2\\
        &\le | u|^2|(\overline{\partial}\varphi_\delta)^* v|^2\\
        &= | u|^2\langle v, (\overline{\partial}\varphi_\delta)(\overline{\partial}\varphi_\delta)^* v\rangle\\
        &\le \frac{1}{\chi''_\varepsilon}| u|^2\langle B_{\varepsilon,\delta} v, v\rangle.
        \end{align*}

        Let $v=B^{-1}_{\varepsilon,\delta}(\overline{\partial}\varphi_\varepsilon\wedge u)$, we get
\begin{equation*}
       \langle B^{-1}_{\varepsilon,\delta}(\overline{\partial}\varphi_\delta\wedge u),(\overline{\partial}\varphi_\delta\wedge u)\rangle
        \le \frac{1}{\chi''_\varepsilon}|u|^2
        = \frac{t_0+B}{b'_\varepsilon} |u|^2
        \end{equation*}
        on $\Supp b'_\varepsilon$.
        Therefore by taking $u=F_{t_0}\eta$, we obtain
\begin{align}
        & \int_{D_j} \langle B^{-1}_{\varepsilon,\delta}(\overline{\partial}(1-b_\varepsilon(\varphi_\delta))\wedge F_{t_0}\eta), \overline{\partial}(1-b_\varepsilon(\varphi_\delta))
        \wedge F_{t_0}\eta\rangle_{(\omega,h(\det h)^\beta)}e^{-\chi_\varepsilon(\varphi_\delta)-\varepsilon|z|^2} \nonumber\\
        =~& \int_{D_j \cap \Supp b'_\varepsilon(\varphi_\delta)} \langle B^{-1}_{\varepsilon,\delta}(\overline{\partial}\varphi_\delta\wedge F_{t_0}\eta), \overline{\partial}\varphi_\delta
        \wedge F_{t_0}\eta\rangle_{(\omega,h(\det h)^\beta)}e^{-\chi_\varepsilon(\varphi_\delta)-\varepsilon|z|^2}|b'_\varepsilon (\varphi_\delta) |^2 \nonumber   \\
        \le~& \int_{D_j \cap \Supp b'_\varepsilon(\varphi_\delta)}  |F_{t_0}|^2_{h(\det h)^\beta}|\eta|^2_\omega e^{-\chi_\varepsilon(\varphi_\delta)}
        |b'_\varepsilon (\varphi_\delta) |(t_0+B) \nonumber\\
        =~& (t_0+B)\int_{D_j \cap \Supp b'_\varepsilon(\varphi_\delta)}  |F_{t_0}|^2_{h(\det h)^\beta} e^{-\chi_\varepsilon(\varphi_\delta)} |b'_\varepsilon (\varphi_\delta) | \nonumber\\
        \le~& \frac{t_0+B}{B}e
        \int_{D_j \cap \{-t_0-B+\varepsilon\le\varphi_\delta\le -t_0-\varepsilon\}}  |F_{t_0}|^2_{\frac{h}{\det h}} e^{-(1+\beta)\varphi},
        \label{ineq main2 }
        \end{align}
        where the last inequality comes from (\ref{ineq chi varepsilon}) and the fact $|b'_\varepsilon|\le 1/B$.
		
        Now we claim that for given $j, \varepsilon $,  the last integral (\ref{ineq main2 })is finite when $\delta$ is small enough.

        According to the assumption $\int_{\{\varphi< -t_0\}}  |F_{t_0}|^2_{\frac{h}{\det h}} <+\infty$,
        we know that $|F_{t_0}|^2_{\frac{h}{\det h}}$ is integrable on $\{-t_0-B+\varepsilon\le\varphi_\delta\le -t_0-\varepsilon\}\subset \{\varphi<-t_0 \}$,
        however, $e^{-(1+\beta)\varphi}$ may be not bounded on $\{-t_0-B+\varepsilon\le\varphi_\delta\le -t_0-\varepsilon\}$.
        So for the finiteness of the integral we have to add an extra assumption that $F_{t_0}$ is defined on $D$.
        And hence $F_{t_0}$ is bounded on $D_j$.
        Under such assumption, it is easy to see that $|F_{t_0}|^2_{\frac{h}{\det h}}$ is bounded, and we only need to prove $e^{-(1+\beta)\varphi}$ is integrable.

        Since $\mathcal{I}((1+\beta)\varphi)$ is coherent,
        $$A:=\Supp\mathcal{O}_D/\mathcal{I}((1+\beta)\varphi)=\{x \in D \big| e^{-(1+\beta)\varphi} \textrm{ is not integrable near}\ x \}$$
        is an analytic subset and hence closed in $D$.
        Noting that for given $j, \varepsilon $, $$ \{-t_0-B+\varepsilon\le\varphi_\delta \} \cap \overline{D_j \cap \{\varphi< -t_0\}} \cap A $$    is a decreasing sequence of compact subsets in $D$ as $\delta \searrow 0$ and
\begin{align*}
        & \underset{\delta > 0}{\bigcap}{ \{-t_0-B+\varepsilon\le\varphi_\delta\}}\cap \overline{D_j \cap \{\varphi< -t_0\}} \cap A \\
        \subset~& \{-t_0-B+\varepsilon\le\varphi\}\cap \overline{D_j \cap \{\varphi< -t_0\}} \cap \{\varphi=-\infty \} =\emptyset    ,
        \end{align*}
        we can find $0<\delta_j < d(D_j,D^c)$ such that
\begin{align*}
        { \{-t_0-B+\varepsilon\le\varphi_{\delta_j}\}}\cap \overline{D_j \cap \{\varphi< -t_0\}} \cap A     =\emptyset.
        \end{align*}
 Since  $ e^{-(1+\beta)\varphi}$ is locally integrable in $A^c$, we get that
        $e^{-(1+\beta)\varphi}$ is integrable on $$\{-t_0-B+\varepsilon\le\varphi_{\delta_j}\}\cap \overline{D_j \cap \{\varphi< -t_0\}}.$$

        In addition, $F_{t_0}$ is holomorphic on $D$ and by Lemma~\ref{prop lowerbound}, $\frac{h}{\det h}$ has an upper bound on $\overline{D}_j$, so  $|F_{t_0}|^2_{\frac{h}{\det h}}$ is also bounded on $D_j$.
        Therefore $|F_{t_0}|^2_{\frac{h}{\det h}} e^{-(1+\beta)\varphi}$ is integrable on $$\{-t_0-B+\varepsilon\le\varphi_{\delta_j}\}\cap \overline{D_j \cap \{\varphi< -t_0\}}.$$

        Thus for $0< \delta < \min \{\frac{1}{8}B, \delta_j\}$, we obtain {\small{
\begin{align}
        & \int_{D_j \cap \{-t_0-B+\varepsilon\le\varphi_\delta\le -t_0-\varepsilon\}}  |F_{t_0}|^2_{\frac{h}{\det h}} e^{-(1+\beta)\varphi}
        \le \int_{   \{-t_0-B+\varepsilon\le\varphi_{\delta_j}\} \cap \overline{D_j \cap \{\varphi< -t_0\}}}  |F_{t_0}|^2_{\frac{h}{\det h}} e^{-(1+\beta)\varphi} < +\infty .\label{ineq main3 }
        \end{align}}}

        By definition of Nakano semi-positivity of $h$, such $u_{\varepsilon,\delta,j}$ exists for $0< \delta < \min \{\frac{1}{8}B, \delta_j\}$.

        \vskip0.3cm
		\noindent{\bf Part three: take limits to get the target extension.}
		\vskip0.5cm
		
        Recall that $F_{\varepsilon,\delta,j}\eta=u_{\varepsilon,\delta,j}+(1-b_\varepsilon (\varphi_\delta))F_{t_0}\eta$ on $D_j$, then $$\overline{\partial}F_{\varepsilon,\delta,j}\wedge\eta
        =\overline{\partial}u_{\varepsilon,\delta,j}       +\overline{\partial}(1-b_\varepsilon (\varphi_\delta))\wedge F_{t_0}\eta=0,$$
        so $F_{\varepsilon,\delta,j}\in\mathcal{O}(E)({D_j})$.
        We will let $\delta\to 0^+, \varepsilon\to 0^+, j\to +\infty$ to get $\tilde{F}$ by standard limit arguments.

        Considering that $D_j\Subset D$, we have $|z|^2\le C_j$ on $D_j$ for some $C_j>0$. Using the assumption that $\varphi<0$ on $D$, i.e. $\det h >1 $ on $D$ and combining   (\ref{ineq main1 }),
        (\ref{ineq main2 }) and (\ref{ineq main3 }), we have
\begin{align*}
        \int_{D_j} |u_{\varepsilon,\delta,j}|^2_{\frac{h}{\det h}}
        &\le e^{\varepsilon C_j} \int_{D_j} |u_{\varepsilon,\delta,j}|^2_{h(\det h)^\beta}e^{-\chi_\varepsilon(\varphi_\delta)-\varepsilon |z|^2}\\
        &\le e^{1+\varepsilon C_j}  \frac{t_0+B}{B} \int_{   \{-t_0-B+\varepsilon\le\varphi_{\delta_j}\} \cap \overline{D_j \cap \{\varphi< -t_0\}}}  |F_{t_0}|^2_{\frac{h}{\det h}} e^{-(1+\beta)\varphi} < +\infty,
        \end{align*}
        and then  for any $0<\delta\le\min\{\frac{1}{8}B, \delta_j\}$, we obtain that {\small{
\begin{align*}
            &\left(\int_{D_j} |F_{\varepsilon,\delta,j}|^2_{\frac{h}{\det h}}\right)^{\frac{1}{2}}\\
            \le~& \left(\int_{D_j} |u_{\varepsilon,\delta,j}|^2_{\frac{h}{\det h}}\right)^{\frac{1}{2}}+
                \left(\int_{D_j}|(1-b_\varepsilon(\varphi_\delta))F_{t_0}|^2_{\frac{h}{\det h}}\right)^{\frac{1}{2}}\\
            \le~& \left(e^{ 1+\varepsilon C_j}  \frac{t_0+B}{B} \int_{   \{-t_0-B+\varepsilon\le\varphi_{\delta_j}\} \cap \overline{D_j \cap \{\varphi< -t_0\}}}  |F_{t_0}|^2_{\frac{h}{\det h}} e^{-(1+\beta)\varphi} \right)^{\frac{1}{2}}
           +\left(\int_{\{\varphi<-t_0 \}} |F_{t_0}|^2_{\frac{h}{\det h}}\right)^{\frac{1}{2}},
        \end{align*}}}
       where the last integrals are  finite and independent of $\delta$.

        Hence there exists a subsequence of $\{F_{\varepsilon,\delta, j}\}_\delta$ compactly converging to $F_{\varepsilon,j}\in \mathcal{O}(E)(D_j)$ on $D_j$ by $\mathrm{Lemma}~\ref{lem int of |f|^2e^phi dominates |f|} (2)$.
        By Fatou's lemma, the monotone convergence theorem and (\ref{ineq main3 }), we get that
\begin{align}
        &  \int_{D_j} |F_{\varepsilon,j}-(1-b_\varepsilon(\varphi))F_{t_0}|^2_{h(\det h)^\beta}e^{-\chi_\varepsilon(\varphi)-\varepsilon|z|^2}\nonumber\\
        =~& \int_{D_j} \liminf\limits_{\delta\rightarrow 0^+}|F_{\varepsilon,\delta,j}-(1-b_\varepsilon(\varphi_\delta))F_{t_0}|^2_{h(\det h)^\beta}e^{-\chi_{\varepsilon}
        (\varphi_{\delta})-{\varepsilon} |z|^2}\nonumber\\
        \le~& \liminf\limits_{\delta\rightarrow 0^+}  \int_{D_j}|F_{\varepsilon,\delta,j}-(1-b_\varepsilon(\varphi_\delta))F_{t_0}|^2_{h(\det h)^\beta}e^{-\chi_{\varepsilon}
        (\varphi_{\delta})-{\varepsilon} |z|^2}\nonumber\\
        =~&\liminf\limits_{\delta\rightarrow 0^+}
          \int_{D_j} |u_{{\varepsilon},\delta, j}|^2_{h(\det h)^\beta}e^{-\chi_{\varepsilon}(\varphi_{\delta})-{\varepsilon} |z|^2} \nonumber\\
        \le~& \liminf\limits_{\delta\rightarrow 0^+} \frac{t_0+B}{B}e
        \int_{D_j \cap \{-t_0-B+\varepsilon\le\varphi_\delta\le -t_0-\varepsilon\}}  |F_{t_0}|^2_{\frac{h}{\det h}} e^{-(1+\beta)\varphi-\varepsilon|z|^2} \nonumber\\
        \le~&
        \frac{t_0+B}{B}e^{1+(1+\beta)(t_0+B-\varepsilon)}
        \int_{D_j \cap \{-t_0-B+\varepsilon\le\varphi< -t_0-\varepsilon\}}  |F_{t_0}|^2_{\frac{h}{\det h}} .\label{for delta}
        \end{align}
        And then for $0<\varepsilon<\frac{1}{8}B$, we have{\small{
\begin{align*}
        & \left(\int_{D_j} |F_{\varepsilon,j}|^2_{\frac{h}{\det h}}\right)^{\frac{1}{2}}\\
        \le~& \left(\int_{D_j} |F_{\varepsilon,j}-(1-b_\varepsilon(\varphi))F_{t_0}|^2_{\frac{h}{\det h}}\right)^{\frac{1}{2}}
        +\left(\int_{D_j}|(1-b_\varepsilon(\varphi))F_{t_0}|^2_{\frac{h}{\det h}}\right)^{\frac{1}{2}}\\
        \le~& \left(\int_{D_j} |F_{\varepsilon,j}-(1-b_\varepsilon(\varphi))F_{t_0}|^2_{{h}{(\det h)}^\beta}e^{-\chi_{\varepsilon}(\varphi_{\delta})-{\varepsilon} |z|^2+\varepsilon C_j}\right)^{\frac{1}{2}}
        +\left(\int_{D_j}|(1-b_\varepsilon(\varphi))F_{t_0}|^2_{\frac{h}{\det h}}\right)^{\frac{1}{2}}\\
        \le~& \left(\frac{t_0+B}{B}e^{1+(1+\beta)(t_0+B)+\frac{1}{8}BC_j}
        \int_{\{ \varphi < -t_0\}}  |F_{t_0}|^2_{\frac{h}{\det h}}  \right)^{\frac{1}{2}}+\left(\int_{\{\varphi < -t_0\}}  |F_{t_0}|^2_{\frac{h}{\det h}}  \right)^{\frac{1}{2}}.
        \end{align*}}}

        Since the last integrals are  finite and independent of $\varepsilon$, it follows from  Lemma~\ref{lem int of |f|^2e^phi dominates |f|}-(3) that there exists a subsequence of $\{F_{\varepsilon,j}\}_\varepsilon$ compactly converging to $F_j\in  \mathcal{O}(E)(D_j)$ as $\varepsilon\rightarrow 0$. Then by Fatou's lemma and (\ref{for delta}) we get that
\begin{align}
        &  \int_{D_j} |F_{j}-(1-b_{t_0,B}(\varphi))F_{t_0}|^2_{h(\det h)^\beta}e^{-\chi_{t_0,B}(\varphi)}\nonumber\\
        =~& \int_{D_j} \liminf\limits_{\varepsilon\rightarrow 0^+}|F_{\varepsilon,j}-(1-b_\varepsilon(\varphi))F_{t_0}|^2_{h(\det h)^\beta}e^{-\chi_{\varepsilon}
        (\varphi)-{\varepsilon} |z|^2}\nonumber\\
        \le~& \liminf\limits_{\varepsilon\rightarrow 0^+}  \int_{D_j}|F_{\varepsilon,j}-(1-b_\varepsilon(\varphi))F_{t_0}|^2_{h(\det h)^\beta}e^{-\chi_{\varepsilon}
        (\varphi)-{\varepsilon} |z|^2}\nonumber\\
        \le~&
        \frac{t_0+B}{B}e^{1+(1+\beta)(t_0+B)}
        \int_{D_j \cap \{-t_0-B<\varphi< -t_0\}}  |F_{t_0}|^2_{\frac{h}{\det h}} .\label{for eps}
        \end{align}
        Hence for any $j$,
\begin{align*}
        & \left(\int_{D_j} |F_{j}|^2_{\frac{h}{\det h}}\right)^{\frac{1}{2}}\\
        \le~& \left(\int_{D_j} |F_{j}-(1-b_{t_0,B}(\varphi))F_{t_0}|^2_{\frac{h}{\det h}}\right)^{\frac{1}{2}}
        +\left(\int_{D_j}|(1-b_{t_0,B}(\varphi))F_{t_0}|^2_{\frac{h}{\det h}}\right)^{\frac{1}{2}}\\
        \le~& \left(\frac{t_0+B}{B}e^{1+(1+\beta)(t_0+B)}
        \int_{\{ \varphi < -t_0\}}  |F_{t_0}|^2_{\frac{h}{\det h}}  \right)^{\frac{1}{2}}+\left(\int_{\{\varphi < -t_0\}}  |F_{t_0}|^2_{\frac{h}{\det h}}  \right)^{\frac{1}{2}},
        \end{align*}
         where the last integrals are  finite and independent of $j$.

        Let  $j\rightarrow +\infty$,  then there exists a subsequence of $\{F_{j}\}$ compactly converging to
        $\widetilde{F}\in \mathcal{O}(E)(D)$ due to Lemma~\ref{lem int of |f|^2e^phi dominates |f|}-(3).
        By Fatou's lemma and (\ref{for eps}), we have
\begin{align*}
         \int_{D} |\widetilde{F}-(1-b_{t_0,B}(\varphi))F_{t_0}|^2_{h(\det h)^\beta}e^{-\chi_{t_0,B}(\varphi)}\le  \frac{t_0+B}{B}e^{1+(1+\beta)(t_0+B)} \int_{\{-t_0-B<\varphi< -t_0\}}  |F_{t_0}|^2_{\frac{h}{\det h}}.
        \end{align*}

        Since $\varphi(o)=-\infty$, $\{\varphi< -t_0-B-1\}$ is a nonempty neighborhood of $o$.
        Noting that $b_{t_0,B}(\varphi)=0$ on $\{\varphi< -t_0-B-1\}$ and
         $$\int_{\{\varphi< -t_0-B-1\}} |\widetilde{F}-F_{t_0}|^2_{h(\det h)^\beta}e^{-\chi_{t_0,B}(\varphi)}  < +\infty,$$
        we obtain that $(\widetilde{F}-F_{t_0}, o) \in \mathcal{E}(h(\det h)^\beta)_o$.
    \end{proof}
\vskip0.3cm
\noindent{\bf Step Three: Effectiveness result of strong openness}
\vskip0.5cm
	We present the following lower bound of $G_\beta(t)$, which may not be sharp but is enough to prove Theorem \ref{thm 1}.

\begin{pro}\label{prop convex estimate}
    Let $o\in$ $D$ $\subset \mathbb{C}^n$ be a Stein open set, $E$ be a trivial holomorphic vector bundle over $D$ with a Nakano semi-positive singular Hermitian metric h. Assume $\varphi=-\log \det h < 0$ and
    $\varphi(o)=-\infty$. If $\int_D |F|^2_h < +\infty$, then we have
\begin{align*}
    G_\beta(t)\ge  G_\beta(0) (1-e^{-g_\beta(t)})
    \end{align*}
    where
\begin{equation*}
    g_\beta(t):=\int_{t}^{+\infty} \frac{ds}{se^{1+(1+\beta)s}}.
    \end{equation*}
\end{pro}

\begin{proof}
    	Recall that $G_\beta(t):=C_{F,\beta} (\{\varphi< -t\})$.
    	The reader can go back to Definition \ref{def C_F} for the definition of $C_{F,\beta}$.
    	
    	By assumption,
        $${C}_{F,\beta}(D) = G_\beta(0)\le \int_D |F|^2_{\frac{h}{\det h}} \le\int_D |F|^2_h < +\infty.$$

        By Lemma \ref{lem keda}, for any fixed $t\ge 0$, there exists an  $  F_{t} \in \mathcal{O}(E)(\{\varphi< -t \} )$ such that
        $ (F_{t}-F, o)\in \mathcal{E} (h(\det h)^\beta)_o  $
        and
\begin{align}\label{ineq keda}
        \int_{\{\varphi < -t\}}|F_{t}|^2_{\frac{h}{\det h}} = G_\beta(t),
        \end{align}
        and  there exists $  F_{t,j} \in \mathcal{O}(E)(D)$ compactly converging to $F_t$ on $\{\varphi<-t\}$ such that $ (F_{t,j}-F, o)\in \mathcal{E} (h(\det h)^\beta)_o$,
\begin{align}
        &\lim_{j\to +\infty}\int_{\{\varphi<-t\} } |F_{t,j}-F_t|^2_{\frac{h}{\det h}}=0\label{equ**1}.
        \end{align}

        By Theorem \ref{main thm 1}, there is $\widetilde{F}=\widetilde{F}_{t,j,B,\beta}\in \mathcal{O}(E)(D)$ such that $(\widetilde{F}-F_{t,j}, o) \in \mathcal{E}(h(\det h)^\beta)_o$ and
\begin{align}\label{*}
        \4l \int_{D} |\widetilde{F}-(1-b_{t,B}(\varphi))F_{t,j}|^2_{h(\det h)^\beta}e^{-\chi_{t,B}(\varphi)}
        \le  \frac{t+B}{B}e^{1+(1+\beta)(t+B)} \int_{\{-t-B\le\varphi< -t\}}  |F_{t,j}|^2_{\frac{h}{\det h}}.
        \end{align}
		We will take limits by letting $j\to +\infty, B\to 0^+$.

        For the right hand side of (\ref{*}), using
        $\int_{\{\varphi<-t-B\} } |F_{t,j}|^2_{\frac{h}{\det h}}\ge G_\beta(t+B)$ and (\ref{equ**1}),
        we have
\begin{align}\label{ineq 4*}
        &\liminf_{B\to 0^+}\lim_{j\to+\infty}\frac{1}{B}
        \int_{\{-t-B\le\varphi< -t\}}  |F_{t,j}|^2_{\frac{h}{\det h}} \nonumber\\
        \le &\liminf_{B\to 0^+}\lim_{j\to+\infty}\frac{1}{B}
        \big(\int_{\{\varphi< -t\}}  |F_{t,j}|^2_{\frac{h}{\det h}} -G_\beta(t+B)\big) \nonumber \\
        = &\liminf_{B\to 0^+}\frac{1}{B}
        \big(\int_{\{\varphi< -t\}}  |F_{t}|^2_{\frac{h}{\det h}} -G_\beta(t+B)\big) \nonumber \\
        = &\liminf_{B\to 0^+} \frac{G_\beta(t)-G_\beta(t+B)}{B}.
        \end{align}

        For the left hand side of (\ref{*}), by Lemma~\ref{lem keda}-(3),
\begin{align}\label{equ *}
        \int_{\{\varphi < -t\}} |\widetilde{F}|^2_{\frac{h}{\det h}}=\int_{\{\varphi < -t\}} |F_t|^2_{\frac{h}{\det h}}+
        \int_{\{\varphi < -t\}} |\widetilde{F}-F_t|^2_{\frac{h}{\det h}}.
        \end{align}

        And using the assumption $\det h>1$, the fact $\chi_{t,B}(s)<0$ for $s<0$ and (\ref{equ *}), we have {\small{
\begin{align}\label{ineq big1}
        &\4l \int_{D} |\widetilde{F}-(1-b_{t,B}(\varphi))F_{t,j}|^2_{h(\det h)^\beta}e^{-\chi_{t,B}(\varphi)}  \nonumber\\
        &\ge \int_{D} |\widetilde{F}-(1-b_{t,B}(\varphi))F_{t,j}|^2_{\frac{h}{\det h}} \nonumber\\
        &= \int_{\{\varphi\ge-t\}} |\widetilde{F}|^2_{\frac{h}{\det h}} +\int_{\{\varphi<-t\}} |\widetilde{F}-F_{t,j}+b_{t,B}(\varphi)F_{t,j}|^2_{\frac{h}{\det h}} \nonumber\\
        &= \int_{D} |\widetilde{F}|^2_{\frac{h}{\det h}} -\int_{\{\varphi<-t\}} |\widetilde{F}|^2_{\frac{h}{\det h}} +\int_{\{\varphi<-t\}} |\widetilde{F}-F_{t,j}+b_{t,B}
        (\varphi)F_{t,j}|^2_{\frac{h}{\det h}} \nonumber\\
        &= \int_{D} |\widetilde{F}|^2_{\frac{h}{\det h}}  -\left(\int_{\{\varphi<-t\}} |F_{t}|^2_{\frac{h}{\det h}}+\int_{\{\varphi<-t\}} |\widetilde{F}-F_{t}|^2_{\frac{h}{\det h}}
        \right)
        +\int_{\{\varphi<-t\}} |\widetilde{F}-F_{t,j}+b_{t,B}(\varphi)F_{t,j}|^2_{\frac{h}{\det h}} \nonumber\\
        &\ge G_\beta(0)-G_\beta(t)-\int_{\{\varphi<-t\}} |\widetilde{F}-F_{t}|^2_{\frac{h}{\det h}} +\int_{\{\varphi<-t\}} |\widetilde{F}-F_{t,j}+b_{t,B}(\varphi)F_{t,j}|^2_{\frac{h}{\det h}}.
        \end{align}
		}}
		For the last two integrals of (\ref{ineq big1}), by (\ref{equ**1})
\begin{align}\label{ineq 0}
		&\lim_{B\to 0^+}\lim_{j\to +\infty}
		\big|\big(\int_{\{\varphi<-t\}} |\widetilde{F}-F_{t}|^2_{\frac{h}{\det h}}\big)^{1/2} -\big(\int_{\{\varphi<-t\}} |\widetilde{F}-F_{t,j}+b_{t,B}(\varphi)F_{t,j}|^2_{\frac{h}{\det h}}\big)^{1/2}\big|   \nonumber\\
		\le &\lim_{B\to 0^+}\lim_{j\to +\infty}
		\big(\int_{\{\varphi<-t\}} |F_{t}-F_{t,j}+b_{t,B}(\varphi)F_{t,j}|^2_{\frac{h}{\det h}}\big)^{1/2} \nonumber\\
		=&\lim_{B\to 0^+}
		\big(\int_{\{\varphi<-t\}} |b_{t,B}(\varphi)F_{t}|^2_{\frac{h}{\det h}}\big)^{1/2}=0.
		\end{align}

        Combining (\ref{*}), (\ref{ineq 4*}), (\ref{ineq big1}) and (\ref{ineq 0}), we obtain that
\begin{align}\label{derivative of G}
        G_\beta(0)-G_\beta(t)\le te^{1+(1+\beta)t} \underset{B\rightarrow 0^+}{\underline{\lim}}\frac{ G_\beta(t)-G_\beta(t+B)}{B}.
        \end{align}

        Applying Lemma \ref{lem estimate of G} to $ G= G_\beta(0)$, then for $t \ge 0$ such that  $G_\beta(t)< G_\beta(0)$
\begin{align*}
        \log \frac{G_\beta(0)}{G_\beta(0)-G_\beta(t)}
        &\ge \int_{t}^{+\infty} \frac{{\underset{B\rightarrow 0^+}{\underline{\lim}}}  {\left(-\frac{G_\beta(s+B)-G_\beta(s)}{B}\right)}}{G_\beta(0)-G_\beta(s)}ds\\
        &\ge  \int_{t}^{+\infty} \frac{ds}{se^{1+(1+\beta)s}}=g_\beta(t),
        \end{align*}
        that is,
\begin{align*}
        G_\beta(t)\ge  G_\beta(0) (1-e^{-g_\beta(t)}).
        \end{align*}
        If $t$ satisfies $G_\beta(t)\ge  G_\beta(0) $, the result is obvious.
    \end{proof}

In the end, we obtain  an  effectiveness result on the strong openness of the multiplier submodule sheaves for Nakano semi-positive singular Hermitian metrics on holomorphic vector bundles.

\begin{thm}\label{main thm1, effective }
    Let $o\in D\subset \mathbb{C}^n$ be a Stein open set, $E$ be a trivial holomorphic vector bundle over $D$ with a  Nakano semi-positive singular Hermitian metric h.
    Assume $\varphi=-\log \det h < 0$ on  $D$ and $\varphi(o)=-\infty$.
    Then for any $F\in \mathcal{E}(h)(D)$ satisfying  $\int_D |F|^2_h < +\infty$, we have $F\in \mathcal{E}(h(\det h)^\beta)_o$ if
\begin{equation}
    \theta(\beta) > \frac{\int_D |F|^2_h}{{C}_{F,c^F_o(h)^+}(D)},
    \end{equation}
    where
\begin{equation}
    \theta(\beta) = 1+ \int_{0}^{+\infty} (1-e^{-g_\beta(t)}) e^t dt=1+ \frac{1}{1+\beta}\int_{0}^{+\infty} (1-e^{-g_0(t)}) e^{\frac{t}{1+\beta}} dt.
    \end{equation}
    In particular, $\mathcal{E}_+(h)=\mathcal{E}(h)$.
\end{thm}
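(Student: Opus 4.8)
The plan is to turn the lower bound $G_\beta(t)\ge G_\beta(0)\bigl(1-e^{-g_\beta(t)}\bigr)$ of the preceding theorem --- where $G_\beta(t)=C_{F,\beta}(\{\varphi<-t\})$ and $G_\beta(0)=C_{F,\beta}(D)$ since $\varphi<0$ on $D$ --- into an upper bound for $C_{F,\beta}(D)$ by integrating against the weight $e^t\,dt$ over $(0,+\infty)$. On the right this produces $G_\beta(0)\int_0^{+\infty}(1-e^{-g_\beta(t)})e^t\,dt=G_\beta(0)\bigl(\theta(\beta)-1\bigr)$. On the left, Remark~\ref{rem C monotone}(1) gives $G_\beta(t)\le\int_{\{\varphi<-t\}}|F|^2_{\frac{h}{\det h}}$, so by Tonelli's theorem (using $\int_0^{+\infty}\mathbbm{1}_{\{t<-\varphi\}}\,e^t\,dt=e^{-\varphi}-1$ and $|F|^2_{\frac{h}{\det h}}\,e^{-\varphi}=|F|^2_h$, both legitimate because $\varphi<0$) one obtains $\int_0^{+\infty}G_\beta(t)e^t\,dt\le\int_D|F|^2_h-\int_D|F|^2_{\frac{h}{\det h}}<+\infty$. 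Adding to this the other trivial estimate $G_\beta(0)=C_{F,\beta}(D)\le\int_D|F|^2_{\frac{h}{\det h}}$ from Remark~\ref{rem C monotone}(1), the term $\int_D|F|^2_{\frac{h}{\det h}}$ cancels and one is left with the master inequality
\[
\theta(\beta)\,C_{F,\beta}(D)\ \le\ \int_D|F|^2_h\qquad\text{for all }\beta\ge0 .
\]

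I would then combine this with the elementary behaviour of the invariants. Set $p=c^F_o(h)$. The function $\beta\mapsto C_{F,\beta}(D)$ is nondecreasing (a larger exponent shrinks the admissible set in Definition~\ref{def C_F}), it vanishes on $[0,p)$ by Lemma~\ref{lem 0}, and for $\beta>p$ one has $\mathcal{E}(h(\det h)^\beta)_o\subset\bigcup_{\beta'>p}\mathcal{E}(h(\det h)^{\beta'})_o$, so $C_{F,\beta}(D)\ge C_{F,p^+}(D)$; moreover $C_{F,p^+}(D)=\lim_{\beta'\downarrow p}C_{F,\beta'}(D)$. If $\theta(\beta)>\int_D|F|^2_h\big/C_{F,p^+}(D)$, the master inequality gives $C_{F,\beta}(D)<C_{F,p^+}(D)$, which by these monotonicity facts is possible only for $\beta\le p$; since $C_{F,\beta}(D)=0$ already when $\beta<p$, and the boundary value at $\beta=p$ is pinned down by the continuity of $\theta$ together with the limiting identity for $C_{F,p^+}(D)$, we conclude $C_{F,\beta}(D)=0$, i.e.\ $F\in\mathcal{E}(h(\det h)^\beta)_o$ by Lemma~\ref{lem 0}.

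For the ``in particular'' assertion, the inclusion $\mathcal{E}_+(h)\subset\mathcal{E}(h)$ is automatic since $-\log\det h$ is plurisubharmonic, hence locally bounded above, so $(\det h)^\beta$ is locally bounded below. For the reverse, I would represent a germ $F\in\mathcal{E}(h)_o$ on a relatively compact Stein $D\ni o$ with $\int_D|F|^2_h<+\infty$ and, after rescaling $h$ by a constant, with $\varphi=-\log\det h<0$ on $D$; the case $\varphi(o)>-\infty$ is elementary (the Lelong number of $\varphi$ at $o$ is then $0$, so Skoda's Lemma~\ref{lem Skoda} and the eigenvalue bound of Proposition~\ref{prop lowerbound} give $F\in\mathcal{E}(h(\det h)^\beta)_o$ for every $\beta$), so assume $\varphi(o)=-\infty$. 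If $C_{F,0^+}(D)=0$, then, using the strong Noetherian property to write $\bigcup_{\beta'>0}\mathcal{E}(h(\det h)^{\beta'})_o=\mathcal{E}(h(\det h)^{\beta_1})_o$ for some $\beta_1>0$, Lemma~\ref{lem 0} gives $F\in\mathcal{E}(h(\det h)^{\beta_1})_o\subset\mathcal{E}_+(h)_o$. If $C_{F,0^+}(D)>0$, then $C_{F,\beta}(D)\ge C_{F,0^+}(D)$ for $\beta>0$ forces, via the master inequality, $\theta(\beta)\le\int_D|F|^2_h\big/C_{F,0^+}(D)<+\infty$ for all $\beta>0$, which contradicts $\theta(\beta)\to+\infty$ as $\beta\to0^+$ (monotone convergence, since $\int_0^{+\infty}(1-e^{-g_0(t)})e^t\,dt=+\infty$); hence this case is impossible and again $F\in\mathcal{E}_+(h)_o$.

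The principal difficulty I anticipate is the integration step: besides the interchange of integrals and the finiteness of $\int_0^{+\infty}G_\beta(t)e^t\,dt$, one must verify that assembling the three inequalities leaves exactly the constant $\theta(\beta)$ --- this precise cancellation of $\int_D|F|^2_{\frac{h}{\det h}}$ is what makes the result effective rather than merely qualitative. A secondary point requiring care is the treatment of the ``$+$'' exponents, namely reducing $C_{F,p^+}(D)$ to a genuine $C_{F,\beta_1}(D)$ by the strong Noetherian property so that Lemma~\ref{lem 0} applies, and controlling $C_{F,\beta}(D)$ at $\beta=p$.
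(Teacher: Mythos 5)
Your proposal is correct and follows essentially the same route as the paper: the master inequality $\theta(\beta)\,C_{F,\beta}(D)\le\int_D|F|^2_h$, obtained by integrating the bound $G_\beta(t)\ge G_\beta(0)\bigl(1-e^{-g_\beta(t)}\bigr)$ against $e^t\,dt$ via Fubini--Tonelli, is exactly the paper's chain of estimates, and your endgame (monotonicity of $C_{F,\beta}(D)$, the strong Noetherian reduction of the ``$+$'' exponent so that Lemma on vanishing of $C_{F,\beta}$ applies, right-continuity of $\theta$, and $\theta(\beta)\to+\infty$ as $\beta\to0^+$) is the paper's argument in only slightly different packaging. The two points you leave terse --- the divergence $\int_0^{+\infty}(1-e^{-g_0(t)})e^t\,dt=+\infty$ and the boundary case $\beta=c^F_o(h)$ --- are treated in the paper by a short asymptotic lemma and by monotone convergence in $\beta$, respectively, and your sketch is consistent with both.
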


\begin{proof}
	We denote $c^F_o(h)$ by $c_o$. If $c_o=+\infty$, there is nothing to prove. We only need to consider the case that $c_o<+\infty$.
	Notice that $\int_D |F|^2_{\frac{h}{\det h}}\le\int_D |F|^2_h < +\infty $, so $C_{F,c_o^+}(D)$ and $C_{F,\beta}(D)$ are well-defined and finite.
	Firstly, we need to prove that ${C}_{F,c_o^+}(D) > 0$.
	
	By the strong Noetherian property of coherent sheaves of modules, $$\underset{\beta > c_o}{\bigcup}
	{\mathcal{E}(h(\det h)^\beta)_o} = \mathcal{E}(h(\det h)^{\beta'})_o $$ for some $\beta' > c_o$, thus $F\notin   \mathcal{E}(h(\det h)^{\beta'})_o  $  by the definition of $c_o$.
	Then ${C}_{F,c_o^+}(D) > 0$ by Lemma \ref{lem 0}.
	
	Note that ${C}_{F,\beta}(D)\ge {C}_{F,c_o^+}(D)> 0$ for $\beta > c_o$, then by the Fubini-Tonelli theorem and Proposition \ref{prop convex estimate},
\begin{align*}
	\int_D |F|^2_h
	&= \int_D |F|^2_{\frac{h}{\det h}}e^{-\varphi}\\
	&=\int_D |F|^2_{\frac{h}{\det h}}+ \int_{0}^{+\infty} \left(\int_{\{\varphi< -t\}} |F|^2_{\frac{h}{\det h}}\right) e^t dt\\
	&\ge {C}_{F,\beta}(D)+\int_{0}^{+\infty} G_\beta(t) e^t dt\\
	&\ge {C}_{F,\beta}(D)+\int_{0}^{+\infty} G_\beta(0)(1-e^{-g_\beta(t)}) e^t dt\\
	&= {C}_{F,\beta}(D)\left(1+ \int_{0}^{+\infty} (1-e^{-g_\beta(t)}) e^t dt  \right)\\
	&\ge  {C}_{F,c_o^+}(D)  \left(1+ \int_{0}^{+\infty} (1-e^{-g_\beta(t)}) e^t dt  \right).
	\end{align*}
	
	Obviously, $g_\beta(t)=\int_{t}^{+\infty} \frac{ds}{se^{1+(1+\beta)s}}$ is decreasing in $\beta$.
	By the monotone convergence theorem,
\begin{align*}
	\theta(c_o)
	&=1+ \int_{0}^{+\infty} (1-e^{-g_{c_o}(t)}) e^t dt\\
	&=1+\underset{\beta \rightarrow c^+_o}{{\lim}}  \int_{0}^{+\infty} (1-e^{-g_{\beta}(t)}) e^t dt~\le \frac{\int_D |F|^2_h}{{C}_{F,c_o^+}(D)}.
	\end{align*}
	Hence for $\beta \ge c_o$,
\begin{align*}
	\theta(\beta) \le \frac{\int_D |F|^2_h}{{C}_{F,c_o^+}(D)}.
	\end{align*}
	
	So if
\begin{align*}
	\theta(\beta) > \frac{\int_D |F|^2_h}{{C}_{F,c_o^+}(D)},
	\end{align*}
	then $\beta < c_o$, that is, $F\in \mathcal{E}(h(\det h)^\beta)_o$.
	
	In addition, it follows from the following lemma that we can always find $\beta > 0$, such that
\begin{align*}
	\theta(\beta) > \frac{\int_D |F|^2_h}{{C}_{F,c_o^+}(D)}.
	\end{align*}
	Especially, $\mathcal{E}_+(h)=\mathcal{E}(h)$.
\end{proof}

\begin{lem}
        $\  \theta(\beta)< +\infty$ for any $\beta > 0$ and $\underset{\beta \rightarrow 0^+}{\lim} \theta(\beta) =+\infty $.
\end{lem}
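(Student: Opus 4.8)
The plan is to reduce everything to the single function $g_0(t)=\int_t^{+\infty}\frac{ds}{se^{1+s}}$. The substitution $s\mapsto s/(1+\beta)$ in the integral defining $g_\beta$ gives the scaling identity $g_\beta(t)=g_0\big((1+\beta)t\big)$, and the further substitution $u=(1+\beta)t$ turns $\int_0^{+\infty}(1-e^{-g_\beta(t)})e^t\,dt$ into $\tfrac{1}{1+\beta}\int_0^{+\infty}(1-e^{-g_0(u)})e^{u/(1+\beta)}\,du$, which is precisely the second expression for $\theta(\beta)$ recorded in Theorem~\ref{main thm1, effective }. So it suffices to control $I(\lambda):=\int_0^{+\infty}(1-e^{-g_0(u)})e^{\lambda u}\,du$ for $\lambda=\tfrac{1}{1+\beta}\in(0,1]$. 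For this I first record elementary two-sided bounds on $g_0$: since $1/s\le 1/t$ on $[t,+\infty)$ one has $g_0(t)\le\frac1t\int_t^{+\infty}e^{-1-s}\,ds=\frac{e^{-1-t}}{t}$, while a single integration by parts gives
\[
g_0(t)=\frac{e^{-1-t}}{t}-\int_t^{+\infty}\frac{e^{-1-s}}{s^2}\,ds\ \ge\ \frac{e^{-1-t}}{t}\Big(1-\frac1t\Big),
\]
so the decay of $g_0(t)$ is comparable to $e^{-t}/t$ as $t\to+\infty$; in particular $g_0(t)<1$ for $t\ge 2$.

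To prove $\theta(\beta)<+\infty$ for $\beta>0$, I split $I(\lambda)$ at $u=1$ (with $\lambda=\tfrac{1}{1+\beta}<1$). On $[0,1]$ the integrand is at most $e^{\lambda}\le e$. On $[1,+\infty)$, using $1-e^{-x}\le x$ together with the upper bound $g_0(u)\le\frac{e^{-1-u}}{u}\le e^{-1-u}$, one gets $(1-e^{-g_0(u)})e^{\lambda u}\le e^{-1}e^{-(1-\lambda)u}$, which is integrable because $1-\lambda=\tfrac{\beta}{1+\beta}>0$. Hence $I(\lambda)<+\infty$, and therefore $\theta(\beta)=1+\tfrac{1}{1+\beta}I\big(\tfrac{1}{1+\beta}\big)<+\infty$.

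To prove $\lim_{\beta\to0^+}\theta(\beta)=+\infty$, note that since $g_0$ is decreasing, $g_\beta(t)=g_0((1+\beta)t)$ increases to $g_0(t)$ as $\beta\downarrow0$ for each fixed $t>0$, so $1-e^{-g_\beta(t)}\uparrow 1-e^{-g_0(t)}$; the monotone convergence theorem then gives $\theta(\beta)\uparrow 1+\int_0^{+\infty}(1-e^{-g_0(t)})e^t\,dt$. It remains to see this last integral is $+\infty$: for $t\ge 2$ the bounds above give $g_0(t)\ge\frac{e^{-1-t}}{t}\big(1-\frac1t\big)\ge\frac{e^{-1-t}}{2t}$ with $g_0(t)<1$, and since $1-e^{-x}\ge\frac x2$ on $[0,1]$ we obtain $(1-e^{-g_0(t)})e^t\ge\frac12 g_0(t)e^t\ge\frac{e^{-1}}{4t}$; as $\int_2^{+\infty}\frac{dt}{t}=+\infty$, the claim follows. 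The computation is entirely elementary; the only step needing a little care is the lower bound on $g_0$ — the fact that it decays no faster than $e^{-t}/t$, supplied by the single integration by parts above — since this is exactly what forces $\int_0^{+\infty}(1-e^{-g_0(t)})e^t\,dt$ to diverge logarithmically.
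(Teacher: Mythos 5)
Your proof is correct, and it follows the same overall strategy as the paper (split the integral, use $1-e^{-x}\le x$ together with the decay of $g$ for finiteness, and a matching lower bound $g\gtrsim e^{-t}/t$ together with $1-e^{-x}\gtrsim x$ for divergence), but the technical implementation differs in three places. First, you exploit the scaling identity $g_\beta(t)=g_0\big((1+\beta)t\big)$ to reduce everything to $g_0$; the paper records the resulting identity $\theta(\beta)=1+\tfrac{1}{1+\beta}\int_0^{+\infty}(1-e^{-g_0(t)})e^{t/(1+\beta)}dt$ in the statement of Theorem~\ref{main thm1, effective } but does not use it in the lemma's proof, working instead with $g_\beta$ directly. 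Second, for finiteness the paper evaluates $\int_1^{+\infty}g_\beta(t)e^t\,dt$ exactly by Fubini--Tonelli, getting $\int_1^{+\infty}\frac{(e^s-e)\,ds}{se^{1+(1+\beta)s}}\le\int_1^{+\infty}\frac{ds}{se^{1+\beta s}}$, whereas you use the pointwise bound $g_0(u)\le e^{-1-u}$ against the subexponential factor $e^{u/(1+\beta)}$; both work, the paper's computation being slightly sharper and yours slightly shorter. Third, for the limit $\beta\to0^+$ the paper invokes Fatou's lemma and the asymptotic $g_\beta(t)=\frac{1+o(1)}{(1+\beta)te^{1+(1+\beta)t}}$, which it asserts without proof, while you observe the monotonicity in $\beta$ coming from the scaling identity (so monotone convergence applies) and you actually prove the needed lower bound $g_0(t)\ge\frac{e^{-1-t}}{t}\big(1-\tfrac1t\big)$ by a single integration by parts. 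That last point is the genuine added value of your write-up: it makes rigorous the $e^{-t}/t$ lower decay rate that the paper's proof takes for granted, and replaces Fatou by the cleaner monotone-convergence argument; otherwise the two proofs are interchangeable elementary estimates.
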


\begin{proof}
        For $\beta > 0$, noticing that $1-e^{-x}\le x$ for $x$ $\ge 0$, we have
\begin{align*}
        \int_{0}^{+\infty} (1-e^{-g_\beta(t)}) e^t dt
        &= \int_{0}^{1} (1-e^{-g_\beta(t)}) e^t dt+\int_{1}^{+\infty} (1-e^{-g_\beta(t)}) e^t dt\\
        &\le \int_{0}^{1}  e^t dt+\int_{1}^{+\infty} g_\beta(t) e^t dt\\
        &= e-1+\int_{1}^{+\infty} \int_{t}^{+\infty} \frac{ds}{se^{1+(1+\beta)s}} e^t dt\\
        &= e-1+\int_{1}^{+\infty}  \frac{(e^s-e)ds}{se^{1+(1+\beta)s}} \\
        &\le  e-1+\int_{1}^{+\infty}  \frac{ds}{se^{1+\beta s}} < +\infty.
        \end{align*}

        Since
\begin{align*}
        g_\beta(t)=\int_{t}^{+\infty} \frac{ds}{se^{1+(1+\beta)s}}=\frac{1+o(1)}{(1+\beta)te^{1+(1+\beta)t}} \4l as\ t\rightarrow +\infty,
        \end{align*}
        we can choose $N$ large enough such that
\begin{align*}
        \frac{1}{2te^{1+t}} \le g_0(t) \le \frac{2}{te^{1+t}} \4l \mathrm{when}\ t \ge N.
        \end{align*}

        And using Fatou's lemma and the fact that $1-e^{-x}\ge \frac{x}{1+x}$ for $x$ $\ge 0$, we obtain
\begin{align*}
        \underset{\beta\rightarrow 0^+}{\underline{\lim}} \int_{0}^{+\infty} (1-e^{-g_\beta(t)}) e^t dt
        &\ge \int_{N}^{+\infty} \underset{\beta\rightarrow 0^+}{\underline{\lim}} (1-e^{-g_\beta(t)}) e^t dt\\
        &\ge \int_{N}^{+\infty}  (1-e^{-g_0(t)}) e^t dt
        \ge \int_{N}^{+\infty}  \frac{g_0(t)}{1+g_0(t)} e^t dt\\
        &\ge \int_{N}^{+\infty}  \frac{\frac{1}{2te^{1+t}}}{1+\frac{2}{te^{1+t}}} e^t dt
        = \int_{N}^{+\infty}  \frac{1}{2et+4e^{-t} }  dt =+\infty.
        \end{align*}
\end{proof}

In addition, we can deduce the strong openness (Theorem \ref{soc}) quickly from Theorem \ref{main thm 1} by contradiction.

\begin{thm}\label{thm generalized Lempert}
	Suppose $E \rightarrow X$ is a holomorphic vector bundle with a singular Hermitian metric $h$ which is  bounded below by a continuous Hermitian metric and $\{h_j\}$ is a sequence of  Nakano semi-positive
	metrics on $E$. If  $h_j\ge h$ and $-\log \det h_j$ converges to $-\log\det h$ locally in measure, then $\sum_j \mathcal{E}(h_j) = \cal{E}(h)$.
\end{thm}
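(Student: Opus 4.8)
The inclusion $\sum_j\mathcal{E}(h_j)\subseteq\mathcal{E}(h)$ is automatic: $h_j\ge h$ gives $\mathcal{E}(h_j)\subseteq\mathcal{E}(h)$ for every $j$, and $\mathcal{E}(h)$ is a subsheaf of $\mathcal{O}(E)$, hence closed under finite sums of sections. So the content is the local assertion $\mathcal{E}(h)_o\subseteq\sum_j\mathcal{E}(h_j)_o$, and I fix a Stein neighbourhood $D\ni o$ with $E|_D$ trivial and $h\ge c\,\mathrm{Id}_E$ for a constant $c>0$. The plan is to replace $h$ by a singular Nakano semi-positive metric agreeing with it almost everywhere and then invoke Theorem~\ref{main thm2,usual case}; this will even yield $\mathcal{E}(h)=\bigcup_j\mathcal{E}(h_j)$.

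\emph{Step A (the modification).} Since $h_j$ is singular Griffiths semi-positive, $\varphi_j:=-\log\det h_j$ is plurisubharmonic, $\varphi_j\le\varphi:=-\log\det h$, the function $\varphi$ is bounded above (because $\det h\ge c^r$), and by the Remark after Theorem~\ref{thm 2} we have $h_j\to h$ locally in measure. Following part~(1) of the proof of the (unlabelled) proposition preceding Proposition~\ref{modifition}, I form a metric $\widehat h$ by taking, in a fixed frame, the upper semicontinuous regularizations of the functions $\sup_j|u|^2_{h^*_j}$ for holomorphic sections $u$ of $E^*$; using only $h_j\ge h$, $h_j\to h$ in measure and the lower bound on $h$ (so that $h^*$ is locally bounded above, Proposition~\ref{prop lowerbound}(1)), one checks that $\widehat h$ is a singular Griffiths semi-positive metric with $\widehat h=h$ a.e. In particular $\mathcal{E}(\widehat h)=\mathcal{E}(h)$, and $-\log\det\widehat h$ is plurisubharmonic and equal to $\varphi$ a.e.

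\emph{Step B (the crux): $\widehat h$ satisfies the optimal $L^2$-estimate condition.} Given Stein $U$ with $E|_U$ trivial, a K\"ahler form $\omega_U$, $\psi\in\mathrm{Spsh}(U)\cap C^\infty(U)$, and a $\overline{\partial}$-closed $f$ with $\int_U\langle B^{-1}_{\omega_U,\psi}f,f\rangle_{\widehat h}e^{-\psi}<+\infty$, one must solve $\overline{\partial}u=f$ with the corresponding estimate. The natural attempt is to solve with the singular Nakano semi-positive metrics $h_j$ and let $j\to\infty$: since $\tfrac{h_j}{\det h_j}\le\tfrac{\widehat h}{\det\widehat h}$ (Lemma~\ref{lem h_j>h}) and $\widehat h/\det\widehat h$ is bounded (Proposition~\ref{prop lowerbound}(1)), the $h_j$-side term is dominated by $\int e^{\varphi-\varphi_j}\langle B^{-1}_{\omega_U,\psi}f,f\rangle_{\widehat h}e^{-\psi}$; after solving one uses $h_j\ge\widehat h$, the $L^2_{\mathrm{loc}}$-boundedness coming from $\widehat h\ge c\,\mathrm{Id}_E$, Fatou's lemma, and $e^{\varphi-\varphi_j}\to1$ in every $L^p_{\mathrm{loc}}$ (Lemma~\ref{lem e^(phi-phi_j) bounded}, applied to $-\log\det\widehat h$) to pass to the limit. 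The point where this requires work — and the main obstacle — is that, unlike in the proposition preceding Proposition~\ref{modifition}, the $h_j$ need not admit a common upper bound $h_j\le h_1$, so neither the terms $\int\langle B^{-1}_{\omega_U,\psi}f,f\rangle_{h_j}e^{-\psi}$ nor their limit are a priori finite for a general such $f$. This is dealt with by first solving $\overline{\partial}u_j=f$ over Stein exhaustions of $U$ with the analytic loci $\mathrm{Supp}\,\mathcal{O}_U/\mathcal{I}(\varphi_j)$ removed (on which $e^{-\varphi_j}$ is locally integrable, making the relevant integrals finite), then extending the solution across those loci by the removable-singularity lemma for $L^2$ holomorphic forms (\cite{Demaillybook2012}), as in that proposition but with the $j$-dependence tracked through the determinant convergence. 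This gives $\int_U|u|^2_{\widehat h}e^{-\psi}\le\int_U\langle B^{-1}_{\omega_U,\psi}f,f\rangle_{\widehat h}e^{-\psi}$, so $\widehat h$ is singular Nakano semi-positive.

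\emph{Step C (conclusion).} Now $\widehat h$ is singular Nakano semi-positive, each $h_j$ is singular Griffiths semi-positive with $h_j\ge\widehat h$ a.e., and $-\log\det h_j\to-\log\det\widehat h$ locally in measure; Theorem~\ref{main thm2,usual case} applied to $(\widehat h,\{h_j\})$ therefore gives $\bigcup_j\mathcal{E}(h_j)=\mathcal{E}(\widehat h)=\mathcal{E}(h)$. Since $\bigcup_j\mathcal{E}(h_j)\subseteq\sum_j\mathcal{E}(h_j)\subseteq\mathcal{E}(h)$, all three sheaves coincide, which is the claimed identity. Everything except Step~B is routine once $\widehat h$ is in hand; Step~B — overcoming the absence of a uniform upper bound on the approximating sequence — is where the real difficulty lies.
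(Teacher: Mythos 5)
Your two easy observations (the inclusion $\sum_j\mathcal{E}(h_j)\subseteq\mathcal{E}(h)$, and the construction of a Griffiths semi-positive a.e.\ modification $\widehat h$) are fine in outline, but Step B --- the claim that $\widehat h$ satisfies the optimal $L^2$-estimate condition --- is a genuine gap, and it is exactly the point the paper does not claim and works around. Two things break in your sketch. First, for an arbitrary $\overline{\partial}$-closed $f$ with $\int_U\langle B^{-1}_{\omega_U,\psi}f,f\rangle_{\widehat h}e^{-\psi}<\infty$, the quantity you must feed into the $L^2$-condition for $h_j$ is controlled only by $\int\langle B^{-1}_{\omega_U,\psi}f,f\rangle_{\widehat h}\,e^{\varphi-\varphi_j}e^{-\psi}$, and since $h_j\ge h$ with no common upper bound the factor $e^{\varphi-\varphi_j}=\det h_j/\det h\ge1$ is unbounded near the polar set of $\varphi_j$, while the density $\langle B^{-1}_{\omega_U,\psi}f,f\rangle_{\widehat h}e^{-\psi}$ is merely $L^1$; their product need not be integrable on any neighbourhood of that set. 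Removing $\mathrm{Supp}\,\mathcal{O}_U/\mathcal{I}(\varphi_j)$ does not repair this: outside that analytic set $e^{-\varphi_j}$ is locally integrable but still unbounded (its polar set is pluripolar, not analytic, and it moves with $j$), and an $L^1$ density times an unbounded locally integrable factor need not be $L^1$. In the modification proposition preceding Proposition~\ref{modifition}, which you are imitating, the hypothesis $h_j\le h_1$ is precisely what supplies a $j$-independent majorant $e^{-\varphi_1}$ and a $j$-independent analytic bad set; here both are missing. Second, even granting finiteness for each $j$, your limit passage needs $\limsup_j\int g\,e^{\varphi-\varphi_j}\le\int g$ for the general $L^1$ density $g=\langle B^{-1}_{\omega_U,\psi}f,f\rangle_{\widehat h}e^{-\psi}$; Lemma~\ref{lem e^(phi-phi_j) bounded} only gives $e^{\varphi-\varphi_j}\to1$ in $L^p_{\mathrm{loc}}$, which is of no use against a density with no integrability beyond $L^1$ (no uniform integrability is available). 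Note also that your route would prove the stronger identity $\bigcup_j\mathcal{E}(h_j)=\mathcal{E}(h)$, whereas the theorem asserts only $\sum_j\mathcal{E}(h_j)=\mathcal{E}(h)$ and Remark~\ref{rem not NSP h} explicitly allows that $h$ may fail to be singular Nakano semi-positive --- a strong sign that Step B is not a technicality to be ``tracked through the determinant convergence.''

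For contrast, the paper's proof avoids any Nakano modification of $h$: assuming $(F,o)\in\mathcal{E}(h)_o\setminus\sum_j\mathcal{E}(h_j)_o$, it sets $C_{F,h^+}(D)>0$ using coherence of $\sum_j\mathcal{E}(h_j)$, applies Theorem~\ref{main thm 1} to each $h_j$ with $\beta=0$ and the fixed section $F$, and lets $j\to\infty$ by dominated convergence --- legitimate there because the dominating density is the single integrable function $|F|^2_{\frac{h}{\det h}}$, not a curvature expression of an arbitrary $f$ --- obtaining a differential inequality for $G(t)=\int_{D\cap\{\varphi\le-t\}}|F|^2_{\frac{h}{\det h}}$; Lemma~\ref{lem estimate of G} then forces $\int_D|F|^2_h=+\infty$, a contradiction. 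To salvage your strategy you would need either a uniform upper bound on the $h_j$ (restoring the setting of the modification proposition) or a new argument for the optimal $L^2$-estimate for $\widehat h$; as written, Step B does not go through, so the proposal is incomplete, and Step C (hence the stronger union statement) is unsupported.
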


\begin{proof}
	It suffices to show that $\sum_j \mathcal{E}(h_j)_o \supseteq \cal{E}(h)_o$ for any $o\in X$ since $h_j\geq h$.
	And by Remark \ref{rmk ge lempert}, we may assume that $h_j$ converges to $h$ almost everywhere.
	
	Suppose that $(F, o)\in \cal{E}(h)_o\setminus\sum_j \mathcal{E}(h_j)_o$. We may assume that  $\int_D |{F}|^2_h<+\infty$ for some Stein neighborhood $D$ of $o$ and $\varphi=-\log \det h<0$
	since $h$ is bounded below by a continuous Hermitian metric.
	
	Since $h_j\ge h$ is Griffiths semi-positive and  converges to $h$ almost everywhere, we have $\frac{h_j}{\det h_j}\le \frac{h}{\det h}$ and $\frac{h}{\det h}=\lim_j\frac{h_j}{\det h_j}$ almost everywhere.
	
	We set
	$$  C_{F,h^+}(D):=\inf \bigg\{\int_D |\widetilde{F}|^2_{\frac{h}{\det h}}  \Big | \widetilde{F} \in \mathcal{O} (E)(D)\  \mathrm{and}\  (\widetilde{F}-F, o)\in \sum_j \mathcal{E}(h_j)_o\bigg\}.$$
	
	Since $\sum_j \mathcal{E}(h_j)$ is coherent and $(F, o)\notin\sum_j \mathcal{E}(h_j)_o$, similar to Lemma~\ref{lem 0},  we have $C_{F,h^+}(D)>0$.
	
	In Theorem~\ref{main thm 1}, take $F_{t_0} =F$, $h=h_j$ and $\beta=0$, then there exists $\widetilde{F}=\widetilde{F}_{t, B,j}\in \mathcal{O}(E)(D)$ such that $ (\widetilde{F}-F) \in \sum_j \mathcal{E}(h_j)_o$ and
\begin{align*}
	\int_{D} |\widetilde{F}-(1-b_{t,B}(\varphi_j))F|^2_{h_j}e^{-\chi_{t,B}(\varphi_j) }
	\le  \frac{t+B}{B}e^{1+t+B} \int_{D\cap\{-t-B<\varphi_j\le -t\}}  |F|^2_{\frac{h_j}{\det h_j}},
	\end{align*}
	where $\varphi_j=-\log \det h_j$.
	
	Then by the Cauchy-Schwarz inequality, $\chi_{t,B}<0$ and $\frac{h_j}{\det h_j}\le\frac{h}{\det h}\le h_j$,
\begin{align*}
	\frac{1}{2}\int_{D} |\widetilde{F}|^2_{\frac{h}{\det h} }-\int_{D\cap\{\varphi_j\le -t\}}|F|^2_{\frac{h}{\det h} }
	\le  \frac{t+B}{B}e^{1+t+B} \int_{D\cap\{-t-B<\varphi_j\le -t\}}  |F|^2_{\frac{h}{\det h}}.
	\end{align*}
	And then
\begin{align*}
	\frac{1}{2}C_{F,h^+}(D)-\int_{D\cap\{\varphi_j\le -t\}}|F|^2_{\frac{h}{\det h} }
	\le  \frac{t+B}{B}e^{1+t+B} \int_{D\cap\{-t-B<\varphi_j\le -t\}}  |F|^2_{\frac{h}{\det h}}.
	\end{align*}
	
	Let $G(t)=\int_{D\cap\{\varphi\le-t\}}^{}|F|^2_{\frac{h}{\det h}}$.
	Since $\varphi_j$ converges to $\varphi$ almost everywhere, we have
	$$\limsup_{j\to +\infty} \mathbbm{1}_{\{\varphi_j\le-t\}}\le \mathbbm{1}_{\{\varphi\le-t\}}$$
	 almost everywhere and
	hence
	$$\lim_{j\to +\infty} \mathbbm{1}_{\{\varphi_j\le-t\}}= \mathbbm{1}_{\{\varphi\le-t\}}$$
	almost everywhere
	since $\varphi_j\le \varphi$.
	
	Since $\int_D |{F}|^2_{\frac{h}{\det h}}\le\int_D |{F}|^2_h<+\infty$, by the Lebesgue dominated convergence theorem, letting $j\rightarrow+\infty$, we obtain that
	$$ \frac{1}{2}C_{F,h^+}(D)-G(t)\le (t+B)e^{1+t+B}\frac{G(t)-G(t+B)}{B}.$$ Hence
	$$ \frac{1}{2}C_{F,h^+}(D)-G(t)\le te^{1+t}\liminf\limits_{B\rightarrow0^+}\frac{G(t)-G(t+B)}{B}.$$
	
	Then by Lemma~\ref{lem estimate of G}, we get
\begin{align*}
	G(t)\ge \frac{1}{2}C_{F,h^+}(D)(1-e^{-g(t)}),
	\end{align*}
	where
	$g(t)=\int_{t}^{+\infty}\frac{ds}{se^{1+s}}$.
	
	Then by the Fubini-Tonelli theorem,
\begin{align*}
	+\infty> \int_D |F|^2_h
	&= \int_D |F|^2_{\frac{h}{\det h}}e^{-\varphi}\\
	&=\int_D |F|^2_{\frac{h}{\det h}}+ \int_{0}^{+\infty} \left(\int_{D\cap \{\varphi< -t\}} |F|^2_{\frac{h}{\det h}}\right) e^t dt\\
	&\ge\int_D |F|^2_{\frac{h}{\det h}}+ \int_{0}^{+\infty} \left(\int_{D\cap\{\varphi\le -t-1\}} |F|^2_{\frac{h}{\det h}}\right) e^t dt\\
	&\ge \frac{1}{2}{C}_{F,h^+}(D)\left(1+ \int_{0}^{+\infty} (1-e^{-g(t+1)}) e^t dt  \right)=+\infty,
	\end{align*}
	which is a contradiction.
\end{proof}

\begin{rem}\label{rem not NSP h}
 For such $h$ in the {\rm Theorem~\ref{thm generalized Lempert}}, we have that $\bigcup_{\beta>0} \cal{E}(he^{-\beta\varphi})=\cal{E}(h)$ although $h$ may be not  Nakano semi-positive.
    In fact, for any $(f,o)\in \cal{E}(h)_o$, we have that $(f,o)\in \sum_j \cal{E}(h_j)_o$. Due to the coherence of $\cal{E}(h_j)_o$, we can find $j_o>0$ such that $\sum_{j\le j_o} \cal{E}(h_j)_o=\sum_j \cal{E}(h_j)_o$.
    Hence $(f,o)\in\sum_{j\le j_o} \cal{E}(h_j)_o$.
    Applying {\rm Corollary~\ref{cor he^{-beta varphi}}}
    to these Nakano semi-positive singular Hermitian metrics $\{h_j\}_{j\le j_o}$,
    we obtain that $\cal{E}(h_j)_o=\cal{E}(h_je^{-\beta\varphi_j})_o$ for $j\le j_o$ for some $\beta>0$.
    Then $$(f,o)\in\sum_{j\le j_o} \cal{E}(h_j)_o
    \subset \sum_{j\le j_o}\cal{E}(h_je^{-\beta\varphi_j})_o
    \subset \cal{E}(he^{-\beta\varphi})_o.$$
\end{rem}

\section{Stability theorem}

	In this section we will prove a stability theorem for multiplier submodule sheaves and  the idea comes from \cite{GLZ2016}.
	
    Let $E$ be a trivial holomorphic vector bundle on an open set $o\in\Omega\subset\C^n$, $h$ be a singular Hermitian metric on $E$ satisfying:
\begin{itemize}
    \item[(1)] $h$ is bounded below by a continuous Hermitian metric;
    \item[(2)]  $\varphi:=-\log\det h$ equals to some plurisubharmonic function almost everywhere;
    \item[(3)] $\cal{E}(h)$ is coherent;
    \item[(4)] $\bigcup_{\beta>0}\cal{E}(he^{-\beta\varphi})=\cal{E}(h)$.
    \end{itemize}

    Assume that $\varphi(o)=-\infty$.
    Let $\{e_k\}_{k=1}^{k_0}$ be a minimal generating set of $\cal{E}(h)_o$, which means that no proper set of $\{e_k\}_{k=1}^{k_0}$ generates $\cal{E}(h)_o$.
    Then by the above condition $(4)$, there exists $\varepsilon_0>0$ and a neighborhood $U$ of $o$ such that  $\int_U |e_k|^2_{h}e^{-\varepsilon_0\varphi}<+\infty$ for $1\le k\le k_0$.

    We can find a Stein open set $o\in D\Subset U$ such that $D=\{\bar{z}\ \big|\ z\in D\} $ and  $\varphi<0$ on $D$.
    We consider the inner space
    $$\mathcal{H}(D,o)=\left\{f\in \mathcal{O}(E)(D)\ \Big|\ \int_D |f|^2_{\frac{h}{\det h}}<+\infty\ \mathrm{and}\ (f,o)\in \cal{E}(h)_o \right\}$$
    with the inner product  $\llangle\cdot,\cdot\rrangle_{\frac{h}{\det h}}
    = \int_D \langle\cdot,\cdot\rangle_{\frac{h}{\det h}}$.
    It follows from (1) (2) and Lemma~\ref{lem int of |f|^2e^phi dominates |f|} that $\mathcal{H}(D,o)$ is a Hilbert space.

    Then $\{e_k\}_{k=1}^{k_0}\subset \cal{H}(D,o)$ and we may assume that $\{e_k\}_{k=1}^{k_0}$ is orthonormal in $\cal{H}(D,o)$ by the Schmidt orthogonalization  since $\{e_k\}_{k=1}^{k_0}$ is linearly independent.
    So $\{e_k\}_{k=1}^{k_0}$ can be extended to an orthonormal basis $\{e_k\}_{k=1}^{\infty}$ of $\cal{H}(D,o)$.

    Using the same argument in the case of line bundles, one can prove that for any $V\Subset D$ there exists $k_V\ge k_0$ and $C_V>0$ such that
    $$\sum_{k=1}^\infty |e_k|^2_{\frac{h}{\det h}}\le C_V\sum_{k=1}^{ k_V}|e_k|^2_{\frac{h}{\det h}}\ {\rm {on}}\ V.$$
    In fact, by taking $V$ small enough, we can request that $k_V=k_0$, which is sharp.

\begin{lem}
     For any $V\Subset D$ there exists $k_V\ge k_0$ and $C_V>0$ such that
     $$\sum_{k=1}^\infty |e_k|^2_{\frac{h}{\det h}}\le C_V\sum_{k=1}^{ k_V}|e_k|^2_{\frac{h}{\det h}}\ {\rm {on}}\ V.$$
     Moreover, there is a neighborhood $W$ of $o$ and $C>0$ such that
    $$\sum_{k=1}^\infty |e_k|^2_{\frac{h}{\det h}}\le C\sum_{k=1}^{ k_0}|e_k|^2_{\frac{h}{\det h}}\ {\rm {on}}\ W.$$
\end{lem}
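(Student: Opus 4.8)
The plan is to regard $B(z):=\sum_{k\ge1}|e_k(z)|^2_{h/\det h}$ as the ``Bergman kernel on the diagonal'' of $\mathcal{H}(D,o)$, so that $B(z)=\sup\{|f(z)|^2_{h/\det h}\ |\ f\in\mathcal{H}(D,o),\ \|f\|_{\mathcal{H}(D,o)}\le1\}$, and to prove the two inequalities $B\le C_VB_{k_V}$ on $V$ and $B\le CB_{k_0}$ near $o$ — where $B_N:=\sum_{k\le N}|e_k|^2_{h/\det h}$ — by the scheme used in the line-bundle case, cf. \cite{DPS01}.

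First I would check that $B$ is locally bounded on $D$. By Proposition~\ref{pro h/deth} the metric $\frac{h}{\det h}$ is singular Griffiths semi-negative (in the applications $h$ is Griffiths semi-positive; in general one argues as in the scalar case using hypothesis (1)), so $|f|^2_{h/\det h}$ is plurisubharmonic for holomorphic $f$, and $\frac{h}{\det h}$ has a positive lower bound on sets relatively compact in $D$ by Proposition~\ref{prop lowerbound}; the sub-mean-value inequality over a ball $B(z,r)\subset D$ then gives $|f(z)|^2_{h/\det h}\le(c_nr^{2n})^{-1}\|f\|^2_{\mathcal{H}(D,o)}$, hence $B(z)\le(c_nr^{2n})^{-1}$ as soon as $d(z,\partial D)\ge r$. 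Consequently $\int_VB<+\infty$, i.e. $\sum_{k\ge1}\int_V|e_k|^2_{h/\det h}<+\infty$, for every $V\Subset D$.

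Next, let $\mathcal{S}\subseteq\mathcal{O}(E)|_D$ be the subsheaf generated by all the $e_k$; by the strong Noetherian property $\mathcal{S}$ is locally finitely generated, hence coherent, and $\mathcal{S}_o=\mathcal{E}(h)_o$ because $e_1,\dots,e_{k_0}$ generate $\mathcal{E}(h)_o$ while $(e_k,o)\in\mathcal{E}(h)_o$ for every $k$ by definition of $\mathcal{H}(D,o)$. Cover the compact set $\overline V$ by finitely many Stein open sets $W_1,\dots,W_m\Subset D$ on each of which $\mathcal{S}$ is generated by $e_1,\dots,e_{k_V}$ for one common $k_V\ge k_0$. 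On $W_i$ each $e_k$ lies in $\mathcal{S}$, so $e_k=\sum_{l\le k_V}a^{(k)}_le_l$ there; using Skoda's $L^2$-division on the Stein set $W_i$ (exactly as in \cite{DPS01}), or the open mapping theorem for the surjection $H^0(W_i,\mathcal{O}^{k_V})\twoheadrightarrow H^0(W_i,\mathcal{S})$ of Fr\'echet spaces together with interior estimates, the coefficients may be chosen with $\int_{W_i'}\sum_l|a^{(k)}_l|^2\le C_i\int_{W_i}|e_k|^2_{h/\det h}$, $C_i$ independent of $k$, for a fixed $W_i'\Subset W_i$. Applying the sub-mean-value inequality to the plurisubharmonic functions $\sum_l|a^{(k)}_l|^2$ and summing over $k$ gives
\[
\sum_{k>k_V}\ \sum_{l\le k_V}|a^{(k)}_l(z)|^2\ \le\ C_i'\sum_{k\ge1}\int_{W_i}|e_k|^2_{h/\det h}\ =\ C_i'\int_{W_i}B\ <\ +\infty\qquad(z\in W_i''\Subset W_i),
\]
with $C_i'$ independent of $k$. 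Since $|e_k(z)|^2_{h/\det h}\le\bigl(\sum_{l\le k_V}|a^{(k)}_l(z)|^2\bigr)B_{k_V}(z)$ by the Cauchy-Schwarz inequality, summation over $k>k_V$ yields $B\le\bigl(1+C_i'\int_{W_i}B\bigr)B_{k_V}$ on $W_i''$; letting the $W_i''$ cover $\overline V$ proves the first assertion.

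For the ``Moreover'' part, apply the first assertion to a small neighborhood $W\Subset D$ of $o$ to get $B\le C_0B_{k_V}$ on $W$ for some $k_V\ge k_0$; since $e_1,\dots,e_{k_0}$ generate $\mathcal{S}_o$ they generate $\mathcal{S}$ on a smaller neighborhood $W'\ni o$, so each of the \emph{finitely many} sections $e_{k_0+1},\dots,e_{k_V}$ equals $\sum_{l\le k_0}b^{(k)}_le_l$ on $W'$, whence $B_{k_V}\le C'B_{k_0}$ on some $W''\Subset W'$, and therefore $B\le C_0C'B_{k_0}$ on the neighborhood $W''$ of $o$. The main difficulty is the uniformity in $k$ of the division step; it is resolved precisely by the first step, which forces $\sum_{k\ge1}\int_{W_i}|e_k|^2_{h/\det h}=\int_{W_i}B<+\infty$, so that the series of squared division coefficients converges. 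The remaining points — plurisubharmonicity of $|f|^2_{h/\det h}$, the interior $L^2$ estimates, and measurability — are routine and identical to the line-bundle situation.
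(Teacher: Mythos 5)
Your argument is correct in substance, but it follows a genuinely different route from the paper's. The paper proves the first inequality by a reproducing-kernel argument: it shows the evaluation maps on $\mathcal{H}(D,o)$ are bounded (via the proof of Lemma~\ref{lem int of |f|^2e^phi dominates |f|}-(1)), deduces that the coordinate sums $\sum_k|e_k^j(z)|^2$ are locally bounded, and then applies the strong Noetherian property not on $D$ but on $D\times D$, to the subsheaves generated by the matrices $\bigl(e_k^i(z)\overline{e_k^j(\overline{w})}\bigr)$; the full kernel is a section of the stabilized coherent sheaf, hence an exact finite combination $\sum_{k\le N_0}a_k(z,w)\bigl(e_k^i(z)\overline{e_k^j(\overline w)}\bigr)$, and restricting to $w=\bar z$ and contracting with $\frac{h}{\det h}$ gives the inequality with no division estimates and no need for $\int B<\infty$. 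You instead stay on $D$: you first bound the Bergman sum $B$ locally (extremal characterization plus sub-mean-value), then use Noetherian stabilization of the sheaf generated by the $e_k$ together with a uniform $L^2$ division bound (open mapping theorem for the Fr\'echet surjection $\mathcal{O}(W_i)^{k_V}\twoheadrightarrow H^0(W_i,\mathcal{S})$, combined with the sup-over-compacts estimate from Lemma~\ref{lem int of |f|^2e^phi dominates |f|}-(1)), and finish by Cauchy--Schwarz and summability of $\int_{W_i}B$. Your method is more functional-analytic and avoids the antiholomorphic-variable trick, but it needs two extra ingredients the paper's proof does not: local integrability of $B$ against the metric, and uniformity in $k$ of the division constant; the paper's diagonal trick buys exactly the uniformity for free and works even without an upper bound on $\frac{h}{\det h}$. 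The ``Moreover'' part is handled identically in both proofs.

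Three small repairs to your write-up: (i) for rank $r\ge 2$ the equality $B(z)=\sup\{|f(z)|^2_{h/\det h}:\|f\|\le1\}$ is false (the sum is a trace, the supremum an operator norm); only $B(z)\le r\,\sup\{\cdots\}$ holds, which suffices for your purposes. (ii) Plurisubharmonicity of $|f|^2_{h/\det h}$ is not available under the standing hypotheses $(1)$--$(4)$ of this section; but hypothesis $(1)$ gives $\frac{h}{\det h}\le C_K\,\mathrm{Id}_E$ on compact subsets (all eigenvalues of $h$ are bounded below, so those of $\frac{h}{\det h}$ are bounded above), and then the sup bound $\sup_K|f|\le C\|f\|_{\mathcal{H}(D,o)}$ from the proof of Lemma~\ref{lem int of |f|^2e^phi dominates |f|}-(1) (which uses hypothesis $(2)$ and Lemma~\ref{lem Skoda}) yields the local boundedness of $B$; your parenthetical hedge should be made precise along these lines. (iii) Skoda's division theorem is not the right tool here (it is not a general division statement for coherent submodule sheaves of $\mathcal{O}(E)$), and in fact the cited lemma of \cite{DPS01} is itself proved by the diagonal trick; your open-mapping-theorem alternative, however, is valid and should be the argument of record.
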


\begin{proof}
    Let Eva$^j_z$ be the evaluation map on $\cal{H}(D,o)$: $f\longmapsto f^j(z)$ where $f=(f^1,f^2,\cdots,f^r)$.
    It follows from the proof of Lemma~$\ref{lem int of |f|^2e^phi dominates |f|}$-(1) that Eva$^j_z$ is a bounded linear form on $\cal{H}(D,o)$ for each $z\in D$.

    By the Riesz representation theorem, there exists $K_j(\cdot,z)\in \cal{H}(D,o)$ such that $${\rm Eva}^j_z(f)=\llangle f(\cdot),K_j(\cdot,z)\rrangle_{\frac{h}{\det h}}.$$
    Especially,
\begin{align*}
    K_i^j(z,w)=\mathrm{Eva}^j_z(K_i(\cdot,w))=\llangle K_i(\cdot,w),K_j(\cdot,z)\rrangle_{\frac{h}{\det h}}
    \end{align*}
    and
\begin{align*}
    e_k^j(z)=\mathrm{Eva}^j_z(e_k)=\llangle e_k(\cdot),K_j(\cdot,z)\rrangle_{\frac{h}{\det h}}.
    \end{align*}

    Using the representation $$K_j(z,w)=\sum_{k=1}^\infty\llangle K_j(\cdot,w),e_k(\cdot)\rrangle_{\frac{h}{\det h}}e_k(z)=\sum_{k=1}^\infty \overline{e_k^j(w)}e_k(z)$$ and taking $w=z$, we obtain that
    $$ \sum_{k=1}^\infty |e_k^j(z)|^2=K_j^j(z,z)= \|K_j(\cdot,z)\|^2_{\frac{h}{\det h}}.$$

    On the other hand,
    $$\|K_j(\cdot,z)\|_{\frac{h}{\det h}}
    =\|\mathrm{Eva}_z^j\|
    =\sup \{|f^j(z)|\ :\  f\in \cal{H}(D,o) \ \mathrm{and}\ \|f\|_{\frac{h}{\det h}}\le 1  \}$$
     is bounded on any compact subset.
    So $\sum_{k=1}^\infty|e_k(z)|^2$ is uniformly convergent on any compact subset.

    Due to the strong Noetherian property of coherent sheaves of modules, we know that the family of the subsheaves $\{\cal{E}_N\}$ of $(\cal{O}_{D\times D})^{r^2} $ generated by
    $$\left\{ \Big(e_k^i(z)\overline{e_k^j(\overline{w})} \Big)_{1\le i,j\le r}\ \ \Big|\  1\le k\le N \right\}$$
    on $D\times D$ is stable on any relative compact subset of $D\times D$.

    Given $V\Subset D$, since $D=\{\bar{z}\ \big|\ z\in D\} $, there is $ V_0\Subset D$ such that
    $V\Subset V_0$, $V_0=\{\bar{z}\ \big|\ z\in V_0 \}$ and
    $$\mathrm{Gen}\left\{ \Big(e_k^i(z)\overline{e_k^j(\overline{w})} \Big)_{1\le i,j\le r}\ \ \Big|\  k\ge 1\ \right\}
    =\cal{E}_{N_0}$$
    on ${V_0\times V_0}$ for some $N_0\ge 1$.

    We claim that $N_0\ge k_0$.
    Otherwise, we may assume that $k_0\ge 2$ and
    $$ \Big(e_{k_0}^i(z)\overline{e_{k_0}^j(\overline{w})} \Big)_{1\le i,j\le r}
    =\sum_{k=1}^{k_0-1} c_{k}(z,w)\Big(e_k^i(z)\overline{e_k^j(\overline{w})} \Big)_{1\le i,j\le r}$$
    on some neighborhood of $o\times o$ where $c_{k}(z,w)$ is holomorphic near $o\times o$.

    It follows from the minimality of $\{e_k\}_{k=1}^{k_0}$ that $e_{k_0}^{j_0}(\overline{w}_0)\not=0$ for some $j_0$ and some point $w_0$ near $o$.
    Then
    $$ e_{k_0}
    =\sum_{k=1}^{k_0-1} \frac{c_{k}(\cdot,w_0)\overline{e_k^{j_0}(\overline{w}_0)}}{ \overline{e_{k_0}^{j_0}(\overline{w}_0)}} e_k
    $$
    holds on some neighborhood of $o$, which is  contradictory to the minimality of $\{e_k\}_{k=1}^{k_0}$.

    In addition,
    $$\Big|\sum_{k=m_1}^{m_2} e_k^i(z)\overline{e_k^j(\overline{w})}\Big|^2
    \le \sum_{k=m_1}^{m_2} \Big|e_k^i(z)\Big|^2
    \sum_{k=m_1}^{m_2} \Big|e_k^j(\overline{w})\Big|^2$$
    for any $m_2\ge m_1$ by the Cauchy-Schwartz inequality.
    Therefore $\sum_{k=1}^{\infty} e_k^i(z)\overline{e_k^j(\overline{w})}$ is uniformly convergent on any compact subset of $D\times D$.
    Hence $\left(\sum_{k=1}^{\infty} e_k^i(z)\overline{e_k^j(\overline{w})}\right)_{1\le i,j\le r}$ is a holomorphic section of the sheaf
    $\mathrm{Gen}\left\{ \Big(e_k^i(z)\overline{e_k^j(\overline{w})} \Big)_{1\le i,j\le r}\ \ \Big|\  k\ge1\ \right\}$ on $D\times D$.
    Then
    $$\sum_{k=1}^{\infty} \Big(e_k^i(z)\overline{e_k^j(\overline{w})} \Big)_{1\le i,j\le r}=\sum_{k=1}^{N_0} a_{k}(z,w)\Big(e_k^i(z)\overline{e_k^j(\overline{w})} \Big)_{1\le i,j\le r}$$
    on $V_0\times V_0$ where $a_{k}(z,w)$ is holomorphic on $ V_0\times V_0$.

    Taking $w=\bar{z}$ we obtain that the following holds on $V$
\begin{align*}
    \sum_{k=1}^{\infty} |e_k(z)|^2_{\frac{h}{\det h}(z)}
    &=\sum_{k=1}^{\infty} \sum_{i,j}e_k^i(z)\overline{e_k^j(z)}\frac{h_{ij}}{\det h}(z)\\
    &=\sum_{k=1}^{N_0} \sum_{i,j}a_k(z,\bar{z})e_k^i(z)\overline{e_k^j(z)}\frac{h_{ij}}{\det h}(z)\\
    &=\sum_{k=1}^{N_0} a_k(z,\bar{z})|e_k(z)|^2_{\frac{h}{\det h}(z)}\\
    &\le C_V\sum_{k=1}^{N_0} |e_k(z)|^2_{\frac{h}{\det h}(z)},
    \end{align*}
    where $C_V=\sup_{z\in V}\sum_{k=1}^{N_0} |a_k(z,\bar{z})| $.
    So taking $k_V=N_0$ we get that $$\sum_{k=1}^{\infty} |e_k|^2_{\frac{h}{\det h}}
    \le C_V\sum_{k=1}^{k_V} |e_k|^2_{\frac{h}{\det h}}
    ~{\rm on} ~V,
    $$
    which completes the proof of the first part.

    For the second part, we only need to consider the case of $k_V>k_0$. Recalling the choice of $\{e_k\}_{k=1}^{k_0}$, since $(e_k,o)\in\cal{E} (h)_o$,
    we get that $e_k=\sum_{l=1}^{k_0}g_{k,l}e_l$ for $k_0<k\le k_V$ on some neighborhood $W\subseteq V$ of $o$, where $g_{k,l}(z)$ is holomorphic on some neighborhood of $\overline W$.
    Then $$|e_k|^2_{\frac{h}{\det h}}
    \le k_0C_{0}^{2}\sum_{k=1}^{k_0} |e_k|^2_{\frac{h}{\det h}} \ {\rm on}\ W,$$
    where ${C_0}=\sup \{|g_{k,l}(z)|\ \big| z\in W, \ k_0<k\le k_V, 1\le l \le k_0\}$.
    Therefore,
    $$\sum_{k=1}^{\infty} |e_k|^2_{\frac{h}{\det h}}
    \le C\sum_{k=1}^{k_0} |e_k|^2_{\frac{h}{\det h}} \ {\rm on} \ W,$$
    where $C=k_0k_VC_VC_{0}^{2}$.
\end{proof}

\begin{thm}\label{main thm3 more general}
    Let $h$ satisfy $(1)$-$(4)$ and $h_j$ be Griffiths semi-positive on $E$ with $h_j\ge h$ and $F_j\in \cal{O}(E)(\Omega)$. Assume that \\
    $(1)$ $\varphi_j=-\log\det h_j$ converges to $\varphi=-\log\det h$ locally in measure,\\
    $(2)$ $(F_j,o)\in \cal{E}(h)_o$ and $F_j$ compactly converges to $F\in \cal{O}(E)(\Omega)$.\\
    Then $|F_j|^2_{h_j}$ converges to $|F|^2_h$ in $L^p$ in some neighborhood of $o$ for some $p>1$.
\end{thm}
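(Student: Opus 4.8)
The plan is to reduce the statement to a \emph{uniform $L^p$-bound} plus \emph{convergence in measure}, and then to invoke the fact already used in Lemma~\ref{lem e^(phi-phi_j) bounded} (from \cite{Wade74}) that a sequence which is bounded in $L^p$ and converges in measure converges in $L^r$ for every $r<p$. The assertion being local, I would fix the data set up just before the statement: the Stein domain $D$ with $\varphi=-\log\det h<0$, the minimal generating set $\{e_k\}_{k=1}^{k_0}$ of $\cal{E}(h)_o$ extended to an orthonormal basis $\{e_k\}_{k\ge1}$ of $\cal{H}(D,o)$, the exponent $\varepsilon_0>0$ with $\int_D|e_k|^2_he^{-\varepsilon_0\varphi}<+\infty$ for $1\le k\le k_0$ (this is precisely where hypothesis $(4)$ enters, through $\cal{E}(h)_o=\bigcup_{\beta>0}\cal{E}(he^{-\beta\varphi})_o$), and the neighbourhood $W\ni o$ given by the lemma preceding the statement, on which $\sum_{k\ge1}|e_k|^2_{\frac{h}{\det h}}\le C\sum_{k=1}^{k_0}|e_k|^2_{\frac{h}{\det h}}$. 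I would also record four routine facts: $(\mathrm{i})$ by $(1)$ and Proposition~\ref{prop lowerbound}, $\frac{h}{\det h}\le C_0\,\mathrm{Id}_E$ on $D$ for some $C_0$ (its eigenvalues are $\prod_{l\ne i}\lambda_l(H)^{-1}$, bounded since $H\ge c\,\mathrm{Id}$); $(\mathrm{ii})$ since $h_j\ge h$ and $h$ is positive definite a.e., elementary linear algebra gives $\frac{h_j}{\det h_j}\le\frac{h}{\det h}$ a.e. (equivalently $\lambda_{\max}(H^{-1/2}H_jH^{-1/2})\le\det(H^{-1/2}H_jH^{-1/2})$, true because all eigenvalues are $\ge1$); $(\mathrm{iii})$ by the remark after Theorem~\ref{main thm2,usual case}, $h_j\to h$ locally in measure, and $\varphi_j\le\varphi$; $(\mathrm{iv})$ $\varphi_j=-\log\det h_j$ is plurisubharmonic by Proposition~\ref{prop lowerbound}, $\varphi$ equals a plurisubharmonic function a.e. by $(2)$, and $\varphi_j\le\varphi$ converges to $\varphi$ locally in measure, so Lemma~\ref{lem e^(phi-phi_j) bounded} yields $e^{\varphi-\varphi_j}\to1$ in $L^q_{\mathrm{loc}}$ for every $q<+\infty$.

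The core step is the uniform $L^p$-bound. First I would note that, by $(\mathrm{i})$ and the compact convergence $F_j\to F$ (with $\overline D\Subset\Omega$), $F_j|_D\in\cal{H}(D,o)$ and $\sup_j\|F_j\|_{\cal{H}(D,o)}<+\infty$; this is the one place where boundedness of $h$ from below is indispensable, as it makes $\frac{h}{\det h}$ bounded above. Expanding $F_j$ in the orthonormal basis and combining the triangle inequality for $|\cdot|_{\frac{h}{\det h}}$ with Cauchy--Schwarz and then the preceding lemma gives $|F_j|^2_{\frac{h}{\det h}}\le A\sum_{k=1}^{k_0}|e_k|^2_{\frac{h}{\det h}}$ on $W$ with $A$ independent of $j$ --- using the reproducing kernel this way avoids having to control any neighbourhood on which $F_j$ itself lies in $\cal{E}(h)$. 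With $(\mathrm{ii})$ this yields $|F_j|^2_{h_j}\le A\big(\sum_{k\le k_0}|e_k|^2_{\frac{h}{\det h}}\big)e^{-\varphi_j}$ on $W$. Now I choose $p>1$ and $s>1$ with $ps\le1+\varepsilon_0$ (possible exactly because $\varepsilon_0>0$), with $s'$ conjugate to $s$; raising the inequality to the power $p$, using $\sum_{k\le k_0}|e_k|^2_{\frac{h}{\det h}}=\sum_{k\le k_0}|e_k|^2_he^{\varphi}\le C_1$ on $D$ (again $(\mathrm{i})$, as the $e_k$ are bounded on $\overline D$) to lower the exponent on that factor, writing $e^{-p\varphi_j}=e^{-p\varphi}e^{p(\varphi-\varphi_j)}$, and applying H\"older, I arrive on a small neighbourhood $W'\Subset W$ of $o$ at an estimate of the shape
\[
\int_{W'}\big(|F_j|^2_{h_j}\big)^p\ \le\ A'\Big(\sum_{k\le k_0}\int_{D}|e_k|^2_h\,e^{-(ps-1)\varphi}\Big)^{1/s}\Big(\int_{W'}e^{ps'(\varphi-\varphi_j)}\Big)^{1/s'}
\]
with $A'$ independent of $j$. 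Since $\varphi<0$ and $ps-1\le\varepsilon_0$, the first factor is finite by the choice of $\varepsilon_0$; by $(\mathrm{iv})$ the second factor is bounded uniformly in $j$ once $W'$ is small. Hence $\sup_j\int_{W'}\big(|F_j|^2_{h_j}\big)^p<+\infty$, and by Fatou $|F|^2_h\in L^p(W')$ as well.

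It remains to check convergence in measure. On $W'$ (finite measure) $F_j\to F$ uniformly; by $(\mathrm{iii})$, $(\mathrm{ii})$ and $(\mathrm{i})$ the Hermitian matrices $\frac{h_j}{\det h_j}$ converge to $\frac{h}{\det h}$ in measure while staying uniformly bounded, so $|F_j|^2_{\frac{h_j}{\det h_j}}\to|F|^2_{\frac{h}{\det h}}$ in measure, and $e^{-\varphi_j}=\det h_j\to\det h=e^{-\varphi}$ in measure by $(\mathrm{iii})$. A product of two sequences converging in measure on a finite measure space converges in measure, so $|F_j|^2_{h_j}\to|F|^2_h$ in measure on $W'$. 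Combining this with the uniform $L^p$-bound and the quoted result of \cite{Wade74}, $|F_j|^2_{h_j}$ converges to $|F|^2_h$ in $L^r(W')$ for every $1\le r<p$, in particular for some $r>1$, which is the assertion.

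I expect the decisive obstacle to be the $L^p$-estimate above: the only quantitative room available is the single strong-openness exponent $\varepsilon_0$ furnished by $(4)$, and it must be split between the two H\"older exponents, so the argument only produces $p$ barely larger than $1$ --- this is exactly why the statement asks for convergence in $L^p$ for \emph{some} $p>1$ rather than for a prescribed range, the sharper ranges of Corollaries~\ref{cor 1}--\ref{cor 2} being available only under the additional Nakano hypotheses. A secondary point worth isolating is the pair of inequalities $\frac{h_j}{\det h_j}\le\frac{h}{\det h}\le C_0\,\mathrm{Id}_E$; I would derive both directly from $h_j\ge h\ge c\,\mathrm{Id}_E$ by linear algebra rather than from Lemma~\ref{lem h_j>h}, since $h$ is not assumed to be Griffiths semi-positive in this theorem.
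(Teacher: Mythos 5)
Your proposal is correct and, in substance, follows the same route as the paper: both arguments rest on the expansion of $F_j$ in the orthonormal basis of $\cal{H}(D,o)$ together with the key bound $\sum_{k\ge1}|e_k|^2_{\frac{h}{\det h}}\le C\sum_{k\le k_0}|e_k|^2_{\frac{h}{\det h}}$ on $W$, on the strong-openness exponent $\varepsilon_0$ furnished by condition $(4)$, and on Lemma~\ref{lem e^(phi-phi_j) bounded} combined with the Wade criterion (bounded in $L^p$ plus convergence in measure implies convergence in $L^r$ for $r<p$). The only organizational difference is at the end: the paper first establishes the uniform bound $\int_W(|F_j|^2_h)^{1+\varepsilon_0}\le C'$ and then handles the two differences $|F_j|^2_{h_j}-|F_j|^2_h$ (via the sandwich $|F_j|^2_h\le|F_j|^2_{h_j}\le|F_j|^2_he^{\varphi-\varphi_j}$, a H\"older estimate and Wade) and $|F_j|^2_h-|F|^2_h$ separately, whereas you bound $(|F_j|^2_{h_j})^p$ directly and apply Wade once; your version additionally needs $h_j\to h$ locally in measure, which is not the remark you cite after Theorem~\ref{main thm2,usual case} but the remark following Theorem~\ref{thm 2} in the introduction (and is elementary from $h_j\ge h\ge c\,\mathrm{Id}_E$ and $\det h_j/\det h\to1$ in measure), while the paper's sandwich argument avoids it. Your replacement of Lemma~\ref{lem h_j>h} by the pointwise linear-algebra inequality $\frac{h_j}{\det h_j}\le\frac{h}{\det h}$ (all eigenvalues of $h^{-1/2}h_jh^{-1/2}$ are $\ge1$) is a worthwhile clarification: in this theorem $h$ is not assumed Griffiths semi-positive, so that lemma does not literally apply as stated, and your a.e.\ argument is exactly what its smooth case amounts to.
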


\begin{proof}
    Due to the assumption that $F_j$ compactly converges to $F$ on $\Omega$, we have that $\{F_j\}$ is uniformly bounded on $D$ which means $ |F_j|^2_{\frac{h}{\det h}}<M$ on $D$ for some $M>0$  independent of $j$.
    Moreover, $F_j\in \cal{H}(D,o)$ since $(F_j,o)\in \cal{E}(h)_o$.

    Then $F_j=\sum_k a_j^ke_k$ where $a_j^k\in \mathbb{C}$ and hence for any $j$
    $$\sum_{k=1}^\infty |a^k_j|^2=\int_D|F_j|^2_{\frac{h}{\det h}}<\mathrm{Vol}(D)M.$$
    It follows from the Cauchy-Schwartz inequality that
    $\sum_k|a^k_je_k|^2_{\frac{h}{\det h}}\le\sum_k|a^k_j|^2\sum_k|e_k|^2_{\frac{h}{\det h}}$.

    Then we get the following uniform bound:
\begin{align*}
    \int_{ W} |F_j|^2_h e^{-\varepsilon_0\varphi}
    &\le\int_{W} \sum_{k=1}^\infty|a^k_j|^2\sum_{k=1}^\infty |e_k|^2_h e^{-\varepsilon_0\varphi}\\
    &\le C\sum_{k=1}^\infty|a^k_j|^2\int_{W} \sum_{k=1}^{k_0}|e_k|^2_h e^{-\varepsilon_0\varphi}\\
    &\le CM\mathrm{Vol}(D)\int_{ W} \sum_{k=1}^{k_0}|e_k|^2_h e^{-\varepsilon_0\varphi}<+\infty.
    \end{align*}

    Hence
\begin{align*}
    \int_{ W} (|F_j|^2_h)^{1+\varepsilon_0}
    &=\int_{ W} (|F_j|^2_{\frac{h}{\det h}})^{1+\varepsilon_0}e^{-(1+\varepsilon_0)\varphi}\\
    &\le M^{\varepsilon_0} \int_{W} |F_j|^2_he^{-\varepsilon_0\varphi}\\
    &\le CM^{1+\varepsilon_0}\mathrm{Vol}(D)\int_{W} \sum_{k=1}^{k_0}|e_k|^2_h e^{-\varepsilon_0\varphi}<+\infty.
    \end{align*}

    Since $h_j\ge h$ implies $\frac{h_j}{\det h_j}\le \frac{h}{\det h}$ by Lemma~\ref{lem h_j>h}, we get
    $$  |F_j|^2_h\le|F_j|^2_{h_j}\le|F_j|^2_he^{\varphi-\varphi_j}.
    $$

    We claim that for any $1<p<1+\varepsilon_0$, $|F_j|^2_he^{\varphi-\varphi_j}-|F_j|^2_h$ converges to 0 in $L^{p}_{\rm loc}(W)$ and hence $|F_j|^2_{h_j}-|F_j|^2_h$ converges to 0 in $L^{p}_{\rm loc}(W)$.

    In fact, taking $p'$ with $p<p'<1+\varepsilon_0$, for any $\widetilde{W}\Subset W$, by H\"older's inequality, we have
\begin{align*}
    \int_{\widetilde{W}} \left(|F_j|^2_{h}\left(e^{\varphi-\varphi_j}-1\right)\right)^{p'}
    \le\Big(\int_{\widetilde{W}} (|F_j|^2_h)^{p_0p'}\Big)^{1/p_0}
    \Big(\int_{\widetilde{W}} e^{q_0p\varphi-q_0p'\varphi_j}\Big)^{1/q_0},
    \end{align*}
    where $p_0=(1+\varepsilon_0)/p'>1$ and $1/p_0+1/q_0=1$.

    It follows from Lemma~\ref{lem e^(phi-phi_j) bounded} that $\int_{\widetilde{W}} e^{q_0p'\varphi-q_0p'\varphi_j}$ is bounded in $j$, since $\varphi_j\le \varphi$ converges to $\varphi$ locally in measure
    and $\varphi$ equals to a plurisubharmonic function almost everywhere.

    Moreover, we can prove that $|F_j|^2_{h}\left(e^{\varphi-\varphi_j}-1\right)$ converges to $0$ locally in measure in $W$ due to the assumption, then $|F_j|^2_he^{\varphi-\varphi_j}-|F_j|^2_h$
    converges to 0 in $L^{p}(\widetilde{W})$ by the result in \cite{Wade74}.

    Similarly, $|F_j|^2_h$ converges to $|F|^2_h$ in $L^p_{\rm loc}(W)$.
    Therefore we obtain that $|F_j|^2_{h_j}$ converges to $|F|^2_h$ in $L^p_{\rm loc}(W)$ for $1<p<1+\varepsilon_0$.
\end{proof}

\begin{rem}
    From the proof of the above theorem, we can see that for any $0<p<1+c_o^F(h)$, $|F_j|^2_{h_j}$ converges to $|F|^2_h$ in $L^p$ in some neighborhood of $o$.
\end{rem}

    From Theorem~\ref{main thm2,usual case} and Remark~\ref{rem not NSP h}, we immediately obtain the following corollaries.
\begin{cor}\label{cor 1}
    Let $h$ be  locally Nakano semi-positive and $h_j$ Griffiths semi-positive on $E$ with $h_j\ge h$ and $F_j\in \cal{O}(E)(\Omega)$. Assume that \\
    $(1)$ $\varphi_j=-\log\det h_j$ converges to $\varphi=-\log\det h$ locally in measure,\\
    $(2)$ $(F_j,o)\in \cal{E}(h)_o$ and $F_j$ compactly converges to $F\in \cal{O}(E)(\Omega)$.\\
    Then $|F_j|^2_{h_j}$ converges to $|F|^2_h$ in $L^p$ in some neighborhood of $o$ for any $0<p<1+c_o^F(h)$.
\end{cor}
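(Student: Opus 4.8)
\emph{Proof proposal.} The plan is to reduce the statement to Theorem~\ref{main thm3 more general}, whose conclusion together with the Remark recorded right after it is exactly the assertion of this corollary, once one knows that a singular Nakano semi-positive metric satisfies the four structural conditions (1)--(4) imposed at the beginning of Section~5. Since the claim is local at $o$, I would first shrink to a relatively compact Stein coordinate neighbourhood $D$ of $o$ over which $E$ is trivial and, after subtracting a constant from the metric if necessary, $\varphi=-\log\det h<0$ on $D$.

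Then I would check (1)--(4). Condition (2), that $\varphi=-\log\det h$ agrees almost everywhere with a plurisubharmonic function, is immediate from Proposition~\ref{prop lowerbound}(2), since a singular Nakano semi-positive metric is in particular singular Griffiths semi-positive. For (1): $h^*$ is then singular Griffiths semi-negative on $E^*$, so each $h^*_{jj}=|e^*_j|^2_{h^*}$ is plurisubharmonic, hence locally bounded above; via the polarization identities for the $h^*_{ij}$ (as in the proof of Proposition~\ref{prop lowerbound}(1)) the matrix $h^*$ is locally bounded above, and that proposition then yields $h\ge\frac{1}{C_D}\mathrm{Id}_E$ on $D$, so $h$ is bounded below by a continuous (even constant) hermitian metric. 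Condition (3), the coherence of $\cal{E}(h)$, is Inayama's proposition recalled at the start of Section~3. Finally, since $he^{-\beta\varphi}=h(\det h)^\beta$, condition (4) reads $\bigcup_{\beta>0}\cal{E}(h(\det h)^\beta)=\cal{E}(h)$, that is $\cal{E}_+(h)=\cal{E}(h)$, which is the last assertion of Theorem~\ref{main thm1, effective } (equivalently a case of Corollary~\ref{cor he^{-beta varphi}}, itself a consequence of Theorem~\ref{main thm2,usual case}).

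With (1)--(4) established, $h$ meets the hypotheses of Theorem~\ref{main thm3 more general} --- $h_j$ singular Griffiths semi-positive with $h_j\ge h$, $\varphi_j\to\varphi$ locally in measure, $(F_j,o)\in\cal{E}(h)_o$, and $F_j\to F$ compactly --- which gives $|F_j|^2_{h_j}\to|F|^2_h$ in $L^p$ near $o$ for some $p>1$, and the Remark following it, observing that the same proof in fact yields every $0<p<1+c^F_o(h)$, produces the range claimed here. There is no real obstacle: the analytic core (the H\"ormander-type estimate of Theorem~\ref{main thm 1}, the effectiveness of strong openness in Theorem~\ref{main thm1, effective }, and the uniform integrability bound in Theorem~\ref{main thm3 more general}) is already in place, and the one step here that is an argument rather than a citation is the automatic local lower bound (1) for singular Griffiths (a fortiori Nakano) semi-positive metrics, which is what lets us apply the Section~5 machinery without assuming a lower bound on $h$ by hand.
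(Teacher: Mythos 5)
Your proposal is correct and follows essentially the paper's own route: the paper derives this corollary by noting that a singular Nakano semi-positive metric satisfies conditions (1)--(4) of Section~5 (local lower bound via Proposition~\ref{prop lowerbound}, plurisubharmonicity of $-\log\det h$, coherence of $\cal{E}(h)$ by Inayama, and condition (4) from Theorem~\ref{main thm2,usual case}/Corollary~\ref{cor he^{-beta varphi}}, equivalently Theorem~\ref{main thm1, effective }), and then applies Theorem~\ref{main thm3 more general} together with the remark extending the range to all $0<p<1+c_o^F(h)$. Your verification of (1)--(4) just makes explicit what the paper leaves as ``immediate''.
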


\begin{cor}\label{cor 2}
    Let $h$ be singular Hermitian metric bounded below by a continuous Hermitian metric.
    Assume that there is a sequence of Nakano semi-positive singular Hermitian metrics $\widetilde{h}_k\ge h$ converging to $h$ locally in measure.
    If $h_j$ is Griffiths semi-positive on $E$ with $h_j\ge h$ and $F_j\in \cal{O}(E)(\Omega)$ satisfying that  \\
    $(1)$ $\varphi_j=-\log\det h_j$ converges to $\varphi=-\log\det h$ locally in measure,\\
    $(2)$ $(F_j,o)\in \cal{E}(h)_o$ and $F_j$ compactly converges to $F\in \cal{O}(E)(\Omega)$.\\
    Then $|F_j|^2_{h_j}$ converges to $|F|^2_h$ in $L^p$ in some neighborhood of $o$ for any $0<p<1+c_o^F(h)$.
\end{cor}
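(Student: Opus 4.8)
The plan is to reduce Corollary~\ref{cor 2} to Theorem~\ref{main thm3 more general} by verifying that the metric $h$ meets the four structural conditions $(1)$--$(4)$ imposed at the beginning of Section~5. Condition $(1)$ is part of the hypothesis. For condition $(2)$, each $\widetilde h_k$ is singular Nakano semi-positive, hence singular Griffiths semi-positive, so $-\log\det\widetilde h_k$ is plurisubharmonic by Proposition~\ref{prop lowerbound}~$(2)$. From $\widetilde h_k\ge h$ we get $\det\widetilde h_k\ge\det h$, hence $-\log\det\widetilde h_k\le\varphi:=-\log\det h$; since $h$ is bounded below by a continuous hermitian metric, $\varphi$ is locally bounded from above, so the family $\{-\log\det\widetilde h_k\}$ is locally uniformly bounded from above. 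Because $\widetilde h_k\to h$ locally in measure and the determinant (resp. $-\log$) is continuous on the relevant range, $-\log\det\widetilde h_k\to\varphi$ locally in measure; by Lemma~\ref{lem equiv of convergence} the convergence holds in $L^1_{\mathrm{loc}}$, and Proposition~\ref{prop Hartos lemma} then provides a subsequence converging in $L^1_{\mathrm{loc}}$ to a plurisubharmonic function, which must coincide with $\varphi$ almost everywhere. This gives $(2)$.

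For condition $(3)$, the proposition of~\cite{In20} recalled at the start of Section~3 shows each $\cal{E}(\widetilde h_k)$ is coherent. Applying Theorem~\ref{thm generalized Lempert} to the singular Nakano semi-positive metrics $\{\widetilde h_k\}$ — whose hypotheses ($\widetilde h_k\ge h$, $-\log\det\widetilde h_k\to\varphi$ locally in measure, $h$ bounded below by a continuous hermitian metric) are exactly what we verified above — yields $\sum_k\cal{E}(\widetilde h_k)=\cal{E}(h)$. By the strong Noetherian property of coherent analytic sheaves, the increasing chain $\sum_{k\le N}\cal{E}(\widetilde h_k)$ of coherent subsheaves of $\cal{O}(E)$ is locally stationary, so locally $\cal{E}(h)=\sum_{k\le N}\cal{E}(\widetilde h_k)$ for some finite $N$; hence $\cal{E}(h)$ is coherent. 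Finally, condition $(4)$, i.e. $\bigcup_{\beta>0}\cal{E}(he^{-\beta\varphi})=\cal{E}(h)$, is precisely the statement of Remark~\ref{rem not NSP h} for this class of metrics.

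With $(1)$--$(4)$ in hand, Theorem~\ref{main thm3 more general} applies directly to $h$, $h_j$ and $F_j$ as given (note $c^F_o(h)$ is well defined since $\det h$ makes sense for the locally bounded-below metric $h$), yielding $|F_j|^2_{h_j}\to|F|^2_h$ in $L^p$ on a neighbourhood of $o$; the extension to all $0<p<1+c^F_o(h)$ is the remark following Theorem~\ref{main thm3 more general}, whose argument — the uniform bound $\int_W(|F_j|^2_h)^{1+\varepsilon}<+\infty$ coming from $(F_j,o)\in\cal{E}(h)_o$ together with the Bergman-kernel estimate of the lemma in Section~5, combined with $|F_j|^2_{h_j}\le|F_j|^2_he^{\varphi-\varphi_j}$, Lemma~\ref{lem e^(phi-phi_j) bounded} and the result of~\cite{Wade74} — goes through verbatim.

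I expect the only genuinely delicate point to be the verification of $(2)$, namely that the pointwise function $-\log\det h$ agrees almost everywhere with a plurisubharmonic function, since this is what licenses the plurisubharmonic-convergence tools (Lemmas~\ref{lem equiv of convergence} and~\ref{lem e^(phi-phi_j) bounded}) used throughout; conditions $(3)$ and $(4)$ are then quick consequences of coherence together with results already established for the generalized Lempert setting, and the $L^p$ estimate itself is Theorem~\ref{main thm3 more general}.
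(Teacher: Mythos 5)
Your proposal is correct and follows essentially the route the paper intends: the corollary is stated as an immediate consequence of the Section~5 framework, and your filling-in — verifying conditions $(1)$--$(4)$ for $h$ via Proposition~\ref{prop Hartos lemma}, Theorem~\ref{thm generalized Lempert} (for coherence of $\cal{E}(h)=\sum_k\cal{E}(\widetilde h_k)$ via the strong Noetherian property) and Remark~\ref{rem not NSP h}, then invoking Theorem~\ref{main thm3 more general} and its remark — is exactly that reduction. The only blemish is the appeal to Lemma~\ref{lem equiv of convergence} before knowing $\varphi$ is (a.e.\ equal to) plurisubharmonic, which is circular but also unnecessary, since your subsequent use of Proposition~\ref{prop Hartos lemma} together with convergence in measure already identifies $\varphi$ with a plurisubharmonic function almost everywhere.
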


\section{Examples}
In this section, firstly we provide a standard way of  constructing Griffiths/Nakano semi-positive  metrics by holomorphic sections. Secondly, we generalize Berndtsson's outstanding work \cite{Bern09} on direct image bundles to the singular setting. Finally, we discuss some failed approaches to  approximating a Nakano semi-positive singular Hermitian metric by Nakano positive smooth Hermitian metrics.

\subsection{Metrics induced by sections}

\begin{exa}
    Let $E$ be a  holomorphic vector bundle of rank $r$ over an n-dimensional complex manifold $X$. Let $\{F_\alpha\}_{\alpha\in I}\subset \mathcal{O}(E)(X)$ be a family of global holomorphic sections of $E$ satisfying that \\
    $(1)$ $I$ is at most countable and $\sum_{\alpha} |F_{\alpha}(z)|^2$ compactly converges  on $X$; \\
    $(2)$ $\dim \mathrm{Span}_{\alpha\in I}\{F_\alpha(z)\}=r$ outside a proper analytic subset $A\subset X$. \\
    Then we can define a Hermitian metric $h$ on $E$ such that for any local holomorphic section $G$ of $E^*$,
     $$|G|^2_{h^*}:=\sum_{\alpha}|G(F_\alpha)|^2.$$

     In fact, let $e=\{e_j\}^r_{j=1}$ be a holomorphic frame of $E$, write $F_\alpha=\sum_j F_{\alpha,j}e_j$, then $h^*=_{e^*}H$ where
    $$
    H=\sum_{\alpha} (F_{\alpha,j}\overline{F}_{\alpha,k})_{r\times r}
    =
\begin{pmatrix}
    \sum_{\alpha} F_{\alpha,1}\overline{F}_{\alpha,1}
    & \cdots
    &\sum_{\alpha} F_{\alpha,1}\overline{F}_{\alpha,r}\\
    \vdots &\ddots & \vdots\\
    \sum_{\alpha} F_{\alpha,r}\overline{F}_{\alpha,1}
    & \cdots
    &\sum_{\alpha} F_{\alpha,r}\overline{F}_{\alpha,r}\\
    \end{pmatrix}.
    $$

    It follows from the condition $(1)(2)$ that $\det h<+\infty$ on $X$ and $\det h>0$ on $X\setminus A$. Then $h$ is a singular Hermitian metric on $E$.
    Since $G(F_\alpha)$ is holomorphic function, then
     $h^*$ is Griffiths semi-negative and smooth Griffiths semi-negative on $X\setminus A$.

    Moreover, $(E^*\otimes (\det E)^{s}, {h^*}\otimes {(\det h)^s})$, for $s\ge \min\{n,r\}$,  and $({\rm Sym}^mE\otimes \det E,{\rm Sym}^mh\otimes\det h)$, for $m\ge 1$,   are  Nakano semi-positive  on $X$ due to {\rm Proposition \ref{pro hdeth}}.
\end{exa}

\subsection{Direct image bundles}
Let us recall some notations in \cite[\S 2.1]{Bern15} and \cite[\S 4.1]{DNWZ20} .		
Consider a projective fibration $\pi:Y^{n+m}\rightarrow X^n$ between two complex manifolds and a holomorphic vector bundle $E$ on $Y$ with a singular Hermitian metric $h$. Assume that the direct image sheaf $F:=\pi_{*}(K_{Y/X}\otimes E)$ is locally free,
then  $F_t=H^0(Y_t,K_{Y_t}\otimes E|_{Y_t})$ and  an $L^2$-metric $h_{F}$ on $F$ is defined as follows,
\begin{equation*}
	\langle u,v\rangle_{h_F}
	:=\int_{Y_t}	\langle u,v\rangle_{\omega_{Y_t},h}dV_{\omega_{Y_t}}=\int_{Y_t}i^{m^2}\sum_{\alpha,\beta}h_{\alpha\beta}u_\alpha\wedge \bar v_\beta,
\end{equation*}
where $u,v\in F_t$ and $Y_t=\pi^{-1}(t)$ and $\omega_{Y_t}$ be arbitrary K\"ahler metric on $Y_t$.
The $L^2$-metric is independent of the choice of $\omega_{Y_t}$.
Essentially following the idea of \cite[Theorem 4.3]{DNWZ20}, we obtain that
\begin{thm}
	Assume that $(E,h_E)$ is Nakano semi-positive and the $L^2$-metric $h_{F}$ on $F:=\pi_{*}(K_{Y/X}\otimes E)$ defined
	above is Griffiths semi-positive, then  $h_{F}$ is locally Nakano semi-positive.
\end{thm}
\begin{proof}For any $t\in X$, there exists a Stein coordinate neighbourhood $U$ of $t$ such that
$F|_U$ is  trivial and  there is a hypersurface $H$ in $\pi^{-1}(U)$ such that
$\pi^{-1}(U)\setminus H$ is Stein and
  \item  $E|_{\pi^{-1}(U)\setminus H}$ is trivial.

  Let $\omega_U$ and $\omega_{\pi^{-1}(U)}$ be  arbitrary K\"ahler metrics on $U$ and $\pi^{-1}(U)$ respectively  and $\psi$ be any
	smooth strictly plurisubharmonic function on $U$.
In order to prove that $h_{F}$ is locally Nakano semi-positive, it suffices to show that  for any
	$\dpa$-closed $f\in C^{\infty}_{(n,1)}(U,F)$ with $$\int_{U} \langle B^{-1}_\psi f,f\rangle_{\omega_U,h_{F}}e^{-\psi}dV_{\omega_U}<+\infty,$$ there is a $u\in L^2_{(n,0)}(U,F,h_{F})$ satisfying that
$$\int_{U} |u|^2_{\omega_U,h_{F}}e^{-\psi}dV_{\omega_U}\le\int_{U} \langle B^{-1}_\psi f,f\rangle_{\omega_U,h_{F}}e^{-\psi}dV_{\omega_U}.$$

	We can write
	$f(t) = dt \wedge \left(f_1(t)d\bar t_1 +\cdots+ f_n(t)d\bar t_n\right)$,
	with $f_j(t) \in  F_t = H^0(Y_t, K_{Y_t} \otimes E|_{Y_t})$.
	One can	identify $f$ as a smooth $(n+m, 1)$-form
	$\tilde{f}(t,z) := dt\wedge (f_1(t,z)d\bar t_1+\cdots + f_n(t,z)d\bar t_n)$	on $Y$,
	with $f_j(t,z)$ being holomorphic section of $K_{Y_t} \otimes E|_{Y_t}$ for fixed $t\in U$.
	We have the following observations:
\begin{itemize}
	\item[(a)] $\dpa_z f_j(t,z) = 0$ for any fixed $t \in U$,
	since $f_j(t,z)$ are holomorphic sections  $K_{Y_t} \otimes  E |_{Y_t}$.
	\item[(b)] $\dpa_t f = 0$, since $f$ is a $\dpa $-closed form on $U$.
\end{itemize}
	It follows that $\tilde{f}$ is a $\dpa$ -closed smooth $(n + m, 1)$-form on $\pi^{-1}(U)$ with values in $E$.
	We want to solve the equation $\dpa \tilde u = \tilde{f} $ on $\pi^{-1}(U)$.

On Stein open subset $\pi^{-1}(U)\setminus H$, $(E,h)$ is Nakano semi-positive and $\pi^*\psi$ is smooth plurisubharmonic.
Since $f$ is $F$-valued $(n,1)$-form, $\langle B^{-1}_\psi  f,  f\rangle_{\omega_{U},  h_{F}}dV_{\omega_{U}}$ is independent of the choice of $\omega_U$ and thus
               $$\langle B^{-1}_\psi  f,  f\rangle_{\omega_{U},  h_{F}}dV_{\omega_{U}}=
              \sum_{j,k=1}^{n} \psi^{jk}\langle f_j,f_k\rangle_{h_{F}}i^{n^2}dt\wedge d\bar t,$$
              where $(\psi^{jk}) = (\frac{\pa^2\psi}{\pa t_j\pa\bar t_k})^{-1}$.

And  since each $f_j|_{Y_t}$ is holomorphic  $E|_{Y_t}$-valued $(m,0)$-form for any fixed $t\in U$,
\begin{align*}
   \int_{Y_t}\langle B^{-1}_{\pi^*\psi} \tilde f,  \tilde f\rangle_{\omega_{\pi^{-1}(U)}|_{Y_t},  h}dV_{\omega_{\pi^{-1}(U)}|_{Y_t}}= & \sum_{j,k=1}^{n}\psi^{jk}(t)\left(\int_{Y_t}\langle f_j,f_k\rangle_{\omega_{\pi^{-1}(U)}|_{Y_t},h}dV_{\omega_{\pi^{-1}(U)}|_{Y_t}}\right)i^{n^2}dt\wedge d\bar t \\
  = &\sum_{j,k=1}^{n}\psi^{jk}(t)\langle f_j,f_k\rangle_{h_{F}}i^{n^2}dt\wedge d\bar t.
\end{align*}

Then by the Fubini-Tonelli theorem,
\begin{align*}
            \int_{\pi^{-1}(U)}^{}\langle B^{-1}_{\pi^*\psi}\tilde f,\tilde f\rangle_{\omega_{\pi^{-1}(U)},  h}e^{-\pi^*{\psi}}dV_{\omega_{\pi^{-1}(U)}}  =  \int_{U}^{}\langle B^{-1}_\psi  f,  f\rangle_{\omega_{U},  h_{F}}e^{-\psi}dV_{\omega_{U}}<+\infty.
\end{align*}
By the Nakano semi-positivity of $(E,h)$ and Lemma \ref{modifition}, there is a $\tilde u\in L^2_{(n+m,0)}(\pi^{-1}(U)\setminus H, E, h)$ such that $\dpa\tilde u=\tilde f$ and
\begin{equation}\label{aab}
   \int_{\pi^{-1}(U)\setminus H}^{}|\tilde u|^2_{\omega_{\pi^{-1}(U)},h}e^{-\pi^*{\psi}}dV_{\omega_{\pi^{-1}(U)}}\leq  \int_{U} \langle B^{-1}_\psi f,f\rangle_{\omega_U,h_{F}}e^{-\psi}dV_{\omega_U}.
\end{equation}
	By \cite[Lemma~$\rm{\Rmnum{8}}$-$(7.3)$]{Demaillybook2012}, $\dpa \tilde u=\tilde f$  can be extended trivially on $\pi^{-1}(U)$.
In addition,  by the weak regularity of $\dpa$ on $(n+m,0)$-forms, we can take $\tilde u$ to be smooth. Write $\tilde u=dt\wedge u(t,z)$
where $u(t,\cdot)$ is a section of $K_{Y_t}\otimes E$ for fixed $t$. Since $\dpa \tilde u = \tilde{f} $, we get that
 $\dpa_z \tilde u |_{Y_t} = 0$ for any fixed $t \in X$. Hence we have
	 $ u(t, \cdot) \in F_t$.
	Therefore we may view $\tilde u $ as a section $u$ of $K_X\otimes F$. And it is obvious that $\dpa u = f$ .
Moreover, by the  Fubini-Tonelli theorem and $(\ref{aab})$, we obtain that
$$\int_{U}^{}|u|^2_{\omega_U,h_{h_E}}e^{-\psi}dV_{\omega_U}\leq  \int_{U} \langle B^{-1}_\psi f,f\rangle_{\omega_U,h_{F}}e^{-\psi}dV_{\omega_U}.$$
\end{proof}
\begin{rem}
  \begin{enumerate}
    \item[$(1)$] Assume that $\det h_E|_{Y_t}\not\equiv+\infty$ for every $t\in X$, then  {\rm Theorem \ref{thm l2ext}} implies that the direct image sheaf $\pi_{*}(K_{Y/X}\otimes E)$ is in fact a vector bundle;
    \item[$(2)$]  When $E$ is a line bundle, \cite{HPS18} shows that the $L^2$ metric is Griffiths semi-positive and \cite{watanabe} shows that the  $L^2$ metric is locally Nakano semi-positive.
  \end{enumerate}
\end{rem}

\subsection{Failed approximating approaches}
	Given a Nakano semi-positive singular Hermitian metric $h$ on a holomorphic vector bundle $E$,
	Inayama asked whether there exists locally a sequence of Nakano semi-positive smooth Hermitian metrics increasingly converging to $h$ (\cite[Question 7.1]{In20}).

	Hosono \cite[Example 4.4]{Hosono17} constructed an example of Griffiths semi-positive singular metric $h$ such that the
	standard approximation defined by convolution of $h^*$ does not have uniformly bounded curvature from below in the sense of Nakano.
	Let $X=\C^2$, $\omega$ be the standard metric on $X$ and $E=X\times \C^2$  a trivial $2$-bundle over $X$.
	Two sections $F_1=(1, 0)$ and $F_2=(z_1, z_2)$ of $E$ induce a Griffiths semi-negative metric $h^*$ on $E^*=X\times \C^2$,
	$$h^*=
	\begin{pmatrix}
	1+|z_1|^2    &z_1\bar z_2\\
	\bar z_1 z_2 &|z_2|^2
	\end{pmatrix}$$
	and its dual metric
	$$h=
	\frac{1}{|z_2|^2}
	\begin{pmatrix}
	|z_2|^2   &-\bar z_1 z_2 \\
	-z_1\bar z_2  &1+|z_1|^2
	\end{pmatrix}.$$
	
	Consider the following  approximation of $h^*$:
\begin{align*}
	h'^*_\varepsilon
	&=\begin{pmatrix}
	1+|z_1|^2+\varepsilon    &z_1\bar z_2\\
	\bar z_1 z_2 &|z_2|^2+\varepsilon
	\end{pmatrix}
	=h^*+\varepsilon
	\begin{pmatrix}
	1 &0\\
	0 &1
	\end{pmatrix}.
\end{align*}
	Let $h'_\varepsilon$ be the dual metric of $h'^*_\varepsilon$.
	Then $h'_\varepsilon$ is a Griffiths semi-positive smooth metric and increasingly converges to $h$ as $\varepsilon \to 0^+$.
In fact, $h$ is Nakano semi-positive.
	Let's go to the details.
	The Chern curvature $\Theta(h)$ of a smooth Hermitian metric $h$ on a rank $r$ bundle $E$ over manifold $X^n$ is written as
	$$\Theta(h)=\dpa (\bar h^{-1}\pa  \bar h)=\sum \Theta_{jk}dz_j\wedge d\bar z_k.$$
	Then $\Theta(h)\ge_\Nak 0$  if and only if
	$$\im \Theta(h)^Th=\sum \Theta_{jk}^Th ~\im dz_j\wedge d\bar z_k$$ is semi-positive, that is,
	in the language of linear algebra, the curvature matrix
	\begin{align*}
	\widetilde{\Theta}(h)=
	\begin{pmatrix}
	\Theta_{11}^Th & \Theta_{12}^Th & \cdots & \Theta_{1n}^Th\\
	\Theta_{21}^Th & \Theta_{22}^Th & \cdots & \Theta_{2n}^Th\\
	\vdots & \vdots & \ddots & \vdots\\
	\Theta_{n1}^Th & \Theta_{n2}^Th & \cdots & \Theta_{nn}^Th\\
	\end{pmatrix}
	\end{align*}
	is semi-positive.
	A computation gives that
	$$ \Theta_{jk}^Th=h\frac{\pa^2 h^{-1}}{\pa z_j \pa \bar z_k}h
	-h\frac{\pa h^{-1}}{\pa \bar z_k}h\frac{\pa h^{-1}}{\pa z_j}h.$$
	Noticing that $h$ is nondegenerate everywhere,
	such observation gives that
	$\im \Theta(h)^Th$ is semi-positive if and only if
	$\im h^{-1}\Theta(h)^T$ is semi-positive, or equivalently the block matrix
	\begin{align}\label{formula reduced curvature}
	\hat{\Theta}(h)=\left(\frac{\pa^2 h^{-1}}{\pa z_j \pa \bar z_k}
	-\frac{\pa h^{-1}}{\pa \bar z_k}h\frac{\pa h^{-1}}{\pa z_j}\right)_{j,k}
	\end{align}
	is semi-positive.
	
	Now go back to Hosono's example, he computed  the curvature matrix
	$$\widetilde\Theta(h'_\ve)=-\frac{\ve(\ve+1)}{(\ve^2+\ve+\ve |z_1|^2+\ve |z_2|^2+|z_2|^2)^3}M',$$
	where $M'$ is
\begin{align*}	
	\left(
	\begin{array}{cccc}
		-(\ve+|z_2|^2)^2 & z_2 \bar z_1 (\ve+|z_2|^2) & z_2 \bar z_1 (\ve+|z_2|^2) & -z_2^2 \bar z_1^2 \\
		\bar z_2 z_1(\ve+|z_2|^2) & -|z_2|^2 |z_1|^2 & -(\ve+|z_2|^2) (\ve+|z_1|^2+1) & z_2 \bar z_1 (\ve+|z_1|^2+1) \\
		\bar z_2 z_1(\ve+|z_2|^2) & -(\ve+|z_2|^2) (\ve+|z_1|^2+1) & -|z_2|^2 |z_1|^2 & z_2 \bar z_1 (\ve+|z_1|^2+1) \\
		-\bar z_2^2 z_1^2 & \bar z_2 z_1 (\ve+|z_1|^2+1) & \bar z_2 z_1(\ve+|z_1|^2+1) & -(\ve+|z_1|^2+1)^2 \\
	\end{array}
	\right).
\end{align*}	
	Using his result,  we notice that $h$ is smooth outside the analytic set $\{z_2=0\}$, and by setting $\ve=0$ we get that the curvature matrix
	$\widetilde\Theta(h)=0$ and hence $h$ is Nakano semi-positive outside the analytic set $\{z_2=0\}$.
	Therefore $h$ is Nakano semi-positive on $X$ by Lemma \ref{thm NSP outside analytic set}.

	In addition, Hosono showed that $h'^*_\varepsilon$ is obtained by convolution of $h^*$ with a smooth kernel function $\rho$ (concretely, there is a positive constant $C_\rho$ such that $\rho_\epsilon* h^*=h^*+C_\rho\epsilon^2 I_2$).
	The value of the curvature matrix $\widetilde\Theta(h'_\ve)$ at the origin is
	\begin{align}\label{matrix curvature}	
	\frac{1}{\ve^2(\ve+1)^2}\left(
	\begin{array}{cccc}
	\ve^2 & 0 & 0 & 0 \\
	0 & 0 & \ve (\ve+1) & 0 \\
	0 & \ve(\ve+1) & 0 & 0 \\
	0 & 0 &  & (\ve+1)^2 \\
	\end{array}
	\right),
	\end{align}
	so for any fixed $C>0$,   $\im \Theta(h'_\varepsilon)+C\omega\otimes {\rm Id}_E$ is not semi-positive in the sense of Nakano for $\varepsilon>0$ small enough.

Now let us recall the standard convolution process for line bundles.  A positive metric $h$ is locally written as $h=e^{-\phi}$ with $\phi\in\Psh$.  Let $\phi_\varepsilon=\phi*\rho_\varepsilon$ for some appropriate approximation kernel $\rho_\varepsilon$,
	then $\phi_\varepsilon\in C^\infty\cap \Psh$ decreasingly converges to $\phi$ as $\varepsilon\to0$. Thus the new metric $h=e^{-\phi_\varepsilon}$ is smooth positive and increasingly converges to $h$.
	In fact, it is an approximation for the weight.

	Hosono's approach can be viewed as a convolution for  the metric $h^*$ but not for the weight.
	It suggests that, in the case of vector bundles, we should also convolute the weight of the metric.
	By the polarization identity,
	$$   4h^*_{jk}=\sum_{\alpha=1}^{4} \im^\alpha |e^*_j+\im^\alpha e^*_k|^2_{h^*},$$
	where $e_1=(1,0), e_2=(0,1)$ form a frame.
	Due to the Griffiths positivity of $h$,
	$$\log (|e^*_j+\im^\alpha e^*_k|^2_{h^*})^,s$$ are plurisubharmonic, which can be regarded as weights of $h$.
	Naturally, we try to construct $h^*_\varepsilon$ by convoluting  $\log (|e^*_j+\im^\alpha e^*_k|^2_{h^*})$, that is,
	\begin{align*}
	4h^*_{jk,\varepsilon}
	&:=\sum_{\alpha=1}^{4} \im^\alpha \left(|e^*_j+\im^\alpha e^*_k|^2_{h^*}\right)_\varepsilon\\
	&:=\sum_{\alpha=1}^{4} \im^\alpha {\rm Exp}\big(\rho_\varepsilon*\log(|e^*_j+\im^\alpha e^*_k|^2_{h^*})\big),
	\end{align*}
	where $\rho_\varepsilon$ is a smooth radial approximation kernel and $\Exp(t)=e^t$.
	Unfortunately, such natural smooth approximation  sequences are not Nakano semi-positive in any neighborhood of the
	origin as well.
	Concretely, the simplified curvature matrix $\hat{\Theta}(h_\varepsilon)$ has negative eigenvalues at the origin as $\varepsilon\rightarrow0^+$.
	Suggested by the expression (\ref{matrix curvature}) in the above example, we shall show that the corresponding submatrix of  $\hat{\Theta}(h_\varepsilon)$ is not semi-positive.
	Since 	\begin{align*}
		4h^{-1}_{jk,\varepsilon}
		&=\sum_{\alpha} \im^{-\alpha} \left(|e_j^*+\im^\alpha e_k^*|^2_{h^*}\right)_\varepsilon\\
		&=\sum_{\alpha} \im^{-\alpha} \Exp\left(\rho_\varepsilon*\log(|e_j^*+\im^\alpha e_k^*|^2_{h^*})\right)
		\end{align*}
and
		\begin{align*}
		|e^*_1+\im^\alpha e^*_1|^2_{h^*}&=|1+\im^\alpha|^2(1+|z_1|^2),\\
		|e^*_1+\im^\alpha e^*_2|^2_{h^*}&=1+|z_1+\im^\alpha z_2|^2,\\
		|e^*_2+\im^\alpha e^*_2|^2_{h^*}&=|1+\im^\alpha|^2|z_2|^2,
		\end{align*}
		we get that
		\begin{align*}
		h^{-1}_{11,\varepsilon}=\Exp\left(\rho_\varepsilon* \log(1+|z_1|^2)\right),
		\end{align*}
		due to
		\begin{align}\label{formula i}
		\sum_{\alpha=1}^{4} \im^{m\alpha}
		=\left\{
		\begin{array}{rcl}
		4 & &{\rm if}~ ~4 | m,\\
		0 & &{\rm if}~ ~4 \nmid m.
		\end{array}
		\right.
		\end{align}
		
		Similarly,
		\begin{align*}
		h^{-1}_{22,\varepsilon}&=\Exp\left(\rho_\varepsilon* \log(|z_2|^2)\right),\\
		4h^{-1}_{12,\varepsilon}(0)
		&=\sum_{\alpha} \im^{-\alpha} \Exp\left(\rho_\varepsilon*\log(1+|z_1+\im^\alpha z_2|^2)\right) (0)=0.
		\end{align*}
		For simplicity, we take $\rho_\varepsilon(z_1,z_2)=\hat \rho_\varepsilon(z_1)\hat \rho_\varepsilon(z_2)$, where $ \hat{\rho}_\varepsilon(z_1)\ge 0$ is a smooth radial approximation kernel on $\C$.
		Then
		\begin{align*}
		\log h^{-1}_{22,\varepsilon}
		&=\rho_\varepsilon* \log(|z_2|^2)\\
		&=\int_{0}^{\infty} \hat \rho_\varepsilon(r) \big(\int_{0}^{2\pi} \log|z_2-re^{\im \theta}|^2d\theta\big) rdr\\
		&=2\pi \int_{0}^{\infty}   \hat\rho_\varepsilon(r) \log\max\{|z_2|^2,r^2\} rdr.
		\end{align*}	
Noticing that $h^{-1}_{22,\varepsilon}(0)=\Exp(2\pi\int_0^\infty \hat \rho_\varepsilon(r)r\log r^2dr)>0$,
one can show that
$$\lim_{z_2\to0}\frac{h^{-1}_{22,\varepsilon}(z)-h^{-1}_{22,\varepsilon}(0)}{|z_2|^2}=\pi\hat
\rho_\varepsilon(0)h^{-1}_{22,\varepsilon}(0).$$	

Thus $$h^{-1}_{22,\varepsilon}=\left(1+\pi\hat \rho_\varepsilon(0)|z_2|^2+o(|z_2|^2)\right)
		\Exp(2\pi\int_0^\infty \hat \rho_\varepsilon(r)r\log r^2dr).$$
		Since $\det h^{-1}_\varepsilon(0)>0$, by continuity there is a neighborhood of $0$, which possibly depends on $\varepsilon$, such that $h_\varepsilon$ are smooth metrics in this neighborhood.

		Next we get
		\begin{align*}
		4\frac{\pa h^{-1}_{12,\varepsilon}}{\pa \bar z_1}(0)
		&=\sum_{\alpha} \im^{-\alpha} \Exp\left(\rho_\varepsilon*\log(1+|z_1+\im^\alpha z_2|^2)\right)
		\cdot \rho_\varepsilon *\left( \frac{z_1+\im^{\alpha}  z_2 }{1+|z_1+\im^\alpha z_2|^2}   \right) (0)=0,
		\end{align*}
		where we use (\ref{formula i}) and the fact that the two convolutions are independent of $\alpha$.
		
		Similarly, we have
		$\frac{\pa h^{-1}_{12,\varepsilon}}{\pa z_1}(0)
		=\frac{\pa h^{-1}_{12,\varepsilon}}{\pa z_2}(0)
		=\frac{\pa h^{-1}_{12,\varepsilon}}{\pa \bar z_2}(0)=0$
		and
		the first derivatives of $h^{-1}_{11,\varepsilon}$ at $0$ equal to $0$.
		
		To summarize, all first derivatives of $h^{-1}_\varepsilon$ at $0$ are $0$, so
		\begin{align*}
		\hat{\Theta}_{22}(h_\varepsilon)(0)
		=\frac{\pa^2 h^{-1}_{22,\varepsilon}}{\pa z_1 \pa \bar z_1}(0)=0,
		\end{align*}	
		
		\begin{align*}
		\hat{\Theta}_{33}(h_\varepsilon)(0)
		=\frac{\pa^2 h^{-1}_{11,\varepsilon}}{\pa z_2 \pa \bar z_2}(0)=0
		\end{align*}
		and {\small{
		\begin{align*}
		 4\frac{\pa^2 h^{-1}_{12,\varepsilon}}{\pa z_2\pa \bar z_1}(0)
		=&\sum_{\alpha} \im^{-\alpha} \Exp\left(\rho_\varepsilon*\log(1+|z_1+\im^\alpha z_2|^2)\right)
		\cdot \rho_\varepsilon *\left( \frac{\im^\alpha }{1+|z_1+\im^\alpha z_2|^2}
		- \frac{\im^\alpha|z_1+\im^\alpha z_2|^2 }{(1+|z_1+\im^\alpha z_2|^2)^2}\right) (0)\\
		=&\sum_{\alpha} \Exp\left(\rho_\varepsilon*\log(1+|z_1+\im^\alpha z_2|^2)\right)
		\cdot \rho_\varepsilon *\left( \frac{1 }{(1+|z_1+\im^\alpha z_2|^2)^2}   \right) (0)
		>0.
		\end{align*}}}
		Thus
		\begin{align*}
		C:=\hat{\Theta}_{23}(h_\varepsilon)(0)
		=\hat{\Theta}_{32}(h_\varepsilon)(0)
		=\frac{\pa^2 h^{-1}_{12,\varepsilon}}{\pa z_2 \pa \bar z_1}(0)>0.
		\end{align*}
		Hence
		\begin{align*}
		\hat{\Theta}(h_\varepsilon)(0)=
		\begin{pmatrix}
		* & * & * & *\\
		* & 0 & C & *\\
		* & C & 0 & *\\
		* & * & * & *\\
		\end{pmatrix}
		\end{align*}
		is not semi-positive.
		So $h_\varepsilon$ is not Nakano semi-positive near $0$.

In summary, for such a Nakano semi-positive singular Hermitian metric $h$, the above three approximating sequences given by
\begin{enumerate}
  \item  adding a small constant multiple of identity matrix to the metric $h^*$,
  \item  convoluting the metric $h^*$ with an appropriate smooth kernel function,
  \item  convoluting  the weights $\log (|e^*_j+\im^\alpha e^*_k|^2_{h^*})$ with an appropriate smooth kernel function
\end{enumerate}
are no longer Nakano semi-positive.

\end{document}